\newtheorem{lemma}{Lemma}[section]
\newtheorem{proposition}{Proposition}[section]
\newtheorem{ex}{Example}
\newtheorem{rem}{Remark}
\newtheorem{cor}{Corollary}[section]
\newtheorem{theorem}{Theorem}
\newtheorem{definition}{Definition}
\newtheorem{hyp}{Assumption}
\newcommand\nr{\bar{r}}
\newcommand\nc{\bar{c}}
\newcommand\bc{\mathbbm{c}}
\newcommand\rinf{\underline{r}}
\newcommand\pr{\mathbf{X}}
\newcommand\br{\mathbbm{r}} 
\newcommand\ir{\underline{r}}
\newcommand\vep{\varepsilon}
\newcommand\eps{\varepsilon}
\newcommand\iep{\lfloor\vep^{-1}\rfloor}
\newcommand{\Var}{\mathrm{Var}}
\newcommand{\vpk}{{\varphi(k)}}
\numberwithin{equation}{section}
\definecolor{ppink}{cmyk}{99, 0, 35, 0}
\newcommand{\jj}[1]{#1}
\newcommand{\gr}[1]{#1}
\title{Spreading speed of locally regulated population models in  macroscopically heterogeneous environments}
\author{Pascal Maillard\thanks{Institut de Mathématiques de Toulouse (CNRS UMR 5219) and Université de Toulouse and Institut Universitaire de France (IUF). Partially supported by grant ANR-20-CE92-0010-01. \emph{e-mail:} pascal DOT maillard AT math.univ-toulouse.fr.}, Gaël Raoul\thanks{CMAP, CNRS, \'Ecole polytechnique, I.P. Paris, 91128 Palaiseau, France.}, Julie Tourniaire\thanks{Laboratoire de mathématiques de Besançon, UMR 6623, Université de
		Franche-Comté, CNRS, F-25000 Besançon, France. \emph{e-mail:} julie.tourniaire AT univ-fcomte.fr}}
\date{December 21, 2024}
\begin{document}
\maketitle

\begin{abstract}
We consider a certain lattice branching random walk with on-site competition and in an environment which is heterogeneous at a macroscopic scale $1/\eps$ in space and time. This can be seen as a model for the spatial dynamics of a biological population in a habitat which is heterogeneous at a large scale (mountains, temperature or precipitation gradient\ldots). The model incorporates another parameter, $K$, which is a measure of the local population density. We study the model in the limit when first $\eps\to 0$ and then $K\to\infty$. In this asymptotic regime, we show that the rescaled position of the front as a function of time converges to the solution of an explicit ODE. We further discuss the relation with another popular model of population dynamics, the Fisher-KPP equation, which arises in the limit $K\to\infty$. Combined with known results on the Fisher-KPP equation, our results show in particular that the limits $\eps\to0$ and $K\to\infty$ do not commute in general. We conjecture that an interpolating regime appears when $\log K$ and $1/\eps$ are of the same order.
%Such models have seen considerable interest in the last decades in mathematics, physics and biology. The prototypical model of front propagation is the Fisher-KPP equation, a semi-linear parabolic partial differential equation. 
\end{abstract}

\tableofcontents

\pagebreak 

\section{Introduction}
\sectionmark{Introduction}
In this article, we are interested in the spatial propagation of a biological population in a heterogeneous environment, where the population lives on discrete sites or \emph{demes}. Formally, the population is a system of interacting particles on the integers $\mathbb{Z}$ evolving in discrete time. At each generation, particles duplicate at a certain space- and time-depending \jj{probability}, undergo a regulation step where $K$ particles at most survive at each site and jump (\emph{migrate}) according to a discretized Gaussian distribution.

\jj{In this first section,
	we introduce our model, state the main result of this article
	and compare it with previous results from the PDE literature.}

\subsection{Model and main result} \label{sec:simp:model}

We consider a particle system evolving on the rescaled lattice $\Delta x \cdot \mathbb Z$ at discrete time steps $0,\Delta t, 2\Delta t,\ldots$, where $\Delta t,\,\Delta x>0$ are small parameters. The system depends on the following parameters:
\begin{itemize}
	\item $\eps>0$ a small constant (with $1/\eps$ being the space- and time-scale of interest)
	\item $K>0$ a large constant  (the \emph{carrying capacity})%(the \emph{local population density})
	\item  $r:[0,\infty)\times \mathbb{R}\to (0,\infty) $ \jj{a function (the growth rate). }
\end{itemize}

\jj{The function $r$ will be assumed to satisfy certain regularity
	conditions, which will be described later in this section. A function satisfying
	these assumptions will be referred to as a \textit{good} growth rate function.}

\jj{Formally, our model is a Markov chain $(n_k)_{k\in\mathbb{N}}$ taking values
	in $\mathbb{N}_0^{\mathbb{Z}}$, where $n_k(i)$
	is interpreted as the number of particles on the
	site $i\Delta x$ at time $k\Delta t$. At each time step, the configuration
	$n_{k+1}$ is derived from $n_k$ through three consecutive steps:
	reproduction,  competition and  migration.
	These steps are defined in such a way that the process $n_k$ satisfies a
	\textit{monotonicity} condition: for two copies $n^1$ and $n^2$ of the system,
	starting from two intial conditions such that $n_0^1(i)\geq n_0^2(i)$ for all
	$i\in\mathbb{Z}$, the processes  $n^1$ and $n^2$ can be coupled in such a way that
	$n_k^1(i)\geq n_k^2(i)$ for all $k\in\mathbb{N}$ and $i\in\mathbb{Z}$.
	In this setting, we will need the following two definitions.}

\begin{definition}
	Let $\mu$ and $\nu$ be two probability distributions on $\mathbb R$. We say that $\nu$ stochastically dominates $\mu$ if $\nu([x,\infty))\geq \mu([x,\infty))$ for all $x\in \mathbb{R}$.
\end{definition}
\begin{definition}
	A family of probability distributions $(\nu_r)$ on $\mathbb R$ is
	increasing with respect to $r$ if for all $r_{1}<r_{2}$,
	$\nu_{r_{2}}$ stochastically dominates $\nu_{r_{1}}$.
\end{definition}

\jj{We are now ready to define our interacting particle system.
	Suppose we are given a good growth rate function $r$ and
	an increasing sequence of reproduction laws
	(i.e.~probability distributions on $\mathbb{N}$) denoted by
	$(\nu_r)$. Assume that  $\nu_r(0)=0$ for all $r>0$.
	Additionally, we assume that the initial condition
	satisfies $n_0(i) = 0$ for $i > 0$ and $n_0(0) \ge 1$, i.e.~the right-most
	particle is at the origin.  The configuration $(n_{k+1}(i))_{i\in\mathbb{Z}}$
	is obtained from $(n_{k}(i))_{i\in\mathbb{Z}}$ as follows:}

\begin{enumerate}
	\item \emph{Reproduction step.}
	Each particle living on the $i$-th site at generation $k$
	independently gives birth to a random number of children distributed according to
	$\nu_{r(\eps k\Delta t,\eps i\Delta x)}$.
	\item \emph{Competition step.}
	Only K particles per site survive to the next generation.
	In other words, the number of particles on site $i$ after the competition step is given by the truncated sum
	\begin{equation}\left(\sum_{m=1}^{n_k(i)}Y_m\right)\wedge K,\label{reprtronc}\end{equation}
	where $(Y_m)$ is a sequence of i.i.d. random variables of law $ \nu_{r(\eps k\Delta t,\eps i\Delta x)}$.
	%This step models the effect of food shortages when the population gets too large on one site. In this work, we will only consider local competition.
	
	\item \emph{Migration step.} A particle on the $i$-th site jumps to the site $i+j$ with probability $\mu(j)$, where the \emph{migration law} $\mu$ is a discretized normal distribution:
	\begin{equation}
		\mu(j)=\int_{(j-\frac 12)\Delta x}^{(j+\frac 12)\Delta x}\frac 1{\sqrt{2\pi\Delta t}}e^{-\frac{x^2}{2\Delta t}}\,dx.
		\label{displaw}
	\end{equation}
	All particles jump independently
	and \jj{simultaneously}.
\end{enumerate}
The resulting configuration is $n_{k+1}$.

We denote by $$X^*_k=\max\{i\in\mathbb{Z}:n_k(i)>0\}\cdot \Delta x$$ the position of the rightmost particle at generation $k$ in this system. Note that $X_0^* = 0$ by assumption.
The main goal of this article is to investigate the long-time behaviour of
the process $\left(\eps X_{k}^*\right)_{k\in\mathbb{Z}}$. More precisely, we
compare $\left(\eps X_{k}^*\right)_{k\ge0}$ with $(x(\eps k\Delta t))_{k\ge0}$,
where $x$ is the  solution of the Cauchy problem
\begin{equation}
	\begin{cases}
		x'(t) & =\sqrt{2r(t,x(t))} \\
		x(0)  & = 0.
	\end{cases}
	\tag{$\mathcal{C}$}
	\label{cauchypb}
\end{equation} The result is the following.
\begin{theorem}
	\label{main_theorem}
	\jj{Assume that $r$ is a good growth rate function}.
	Let $T>0$ and $\delta >0$. There exists $\Delta {t_\delta}>0$ and $C_\delta>0$ such that, if
	$
	\Delta t <\Delta {t_\delta}$ and $\Delta x< C_\delta \Delta t,
	$
	there exists $K_0$ such that, for all $K>K_0$
	\begin{equation*}
		\lim_{\eps\to 0}\mathbb{P}\left(\max_{k\in\left\llbracket0, \left\lfloor \frac{T}{\eps\Delta t}\right\rfloor \right\rrbracket}\left| \eps X^*_{k}-x(\eps k\Delta t)\right|\leq \delta\right)=1.%\geq 1-\delta .
	\end{equation*}
	\label{Thintro}\end{theorem}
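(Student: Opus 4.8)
The plan is to prove the theorem by a localization-and-sandwiching scheme: on each short macroscopic time window I reduce the heterogeneous dynamics to the already-understood homogeneous ($r$ constant) model, and then I patch the windows together through a discrete Gronwall argument that mimics the explicit Euler scheme for \eqref{cauchypb}. The role of the two limits is clearly separated: the discretization requirements $\Delta t<\Delta t_\delta$, $\Delta x<C_\delta\Delta t$ and the threshold $K>K_0$ serve only to place the homogeneous front speed close to $\sqrt{2r}$, while the limit $\eps\to0$ is what makes the environment locally constant and the front speed self-average on each window.

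The technical engine is the homogeneous input, which I would take from the analysis of the general model. For a genuinely constant rate $r\in[\rinf,\nr]$ (so $\eps$ plays no role) the same particle system has a deterministic asymptotic front speed $v_{\Delta t,\Delta x,K}(r)$ with $X^*_k\approx v\cdot k\Delta t$, and the conditions on $\Delta t,\Delta x,K$ are exactly what guarantees $|v_{\Delta t,\Delta x,K}(r)-\sqrt{2r}|\le\delta'$ uniformly in $r\in[\rinf,\nr]$, with $\delta'$ a small multiple of $\delta$. I also need the accompanying concentration statement: over $m$ consecutive steps the homogeneous front displacement lies within $o(m)$ of $m\,\Delta t\,v(r)$ with probability tending to $1$ as $m\to\infty$. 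To localize, I partition $[0,T]$ into $N=\lceil T/\eta\rceil$ mesoscopic windows of macroscopic length $\eta$, i.e.\ blocks of $m=\lfloor\eta/(\eps\Delta t)\rfloor$ microscopic steps; sending $\eps\to0$ at fixed $\eta$ forces $m\to\infty$, so the concentration kicks in. Since $r$ is Lipschitz by Assumption~\ref{Assumption0} and the front moves at macroscopic speed at most $\sqrt{2\nr}$, the rate $r(\eps k\Delta t,\eps i\Delta x)$ seen by the front throughout a window stays within $O(\eta)$ of its left-endpoint value $\bar r_\ell:=r(t_\ell,x(t_\ell))$, as long as the front is within $O(\eta)$ of $x(t_\ell)$ there.

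On each window I then sandwich the true process between two homogeneous processes with frozen rates $\bar r_\ell^{\pm}=\bar r_\ell\pm O(\eta)$, giving a macroscopic displacement $\approx\eta\,v(\bar r_\ell^{\pm})\approx\eta\,\sqrt{2\bar r_\ell}+O(\eta^2)$, which is precisely one Euler step of size $\eta$ for \eqref{cauchypb}. Writing $e_\ell=|\eps X^*_{k_\ell}-x(t_\ell)|$ for the error at the window endpoints, the sandwich yields the inductive bound $e_{\ell+1}\le(1+C\eta)e_\ell+C\eta^2+(\text{fluctuation}_\ell)$, where the deterministic terms reproduce the Euler discretization error and the fluctuation term is small with high probability by the homogeneous concentration estimate. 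Summing over the $N=O(1/\eta)$ windows and applying a discrete Gronwall inequality bounds the deterministic contribution by $C\eta\,e^{CT}$, while a union bound over the $O(1/\eta)$ windows controls the accumulated fluctuations; choosing $\eta$ small (and then $\eps\to0$) pushes the total below $\delta$.

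I expect the lower bound to be the main obstacle. The upper coupling is comparatively soft: dropping the competition step only accelerates the front, and a first-moment / many-to-one computation caps the rightmost particle of the resulting pure branching random walk at speed $\sqrt{2\bar r_\ell^+}+o(1)$. The delicate direction is showing that the competition-limited front does not lag, since selection at density $K$ a priori induces a slowdown (a logarithmic-in-$K$ correction of Bramson type). One must argue that, as $K\to\infty$, a saturated block of $\sim K$ particles regenerates after $m$ steps a saturated block advanced by $\approx m\Delta t\,\sqrt{2\bar r_\ell^-}$. I would establish such a "pushing" event with probability close to $1$ via a renormalization/block argument, together with the monotonicity needed to legitimize the frozen-rate coupling; verifying this saturation mechanism uniformly over all the windows is where the real work lies.
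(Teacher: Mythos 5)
Your skeleton (freeze the rate on short time windows, sandwich between homogeneous systems, patch the windows together with an Euler/Gronwall scheme) is indeed the skeleton of the paper, and your upper bound is essentially the proof of Theorem~\ref{thupperbound}: remove competition via the monotone coupling (Lemma~\ref{lem:coupl}), compare with branching random walks with frozen rates, and control the maximum by many-to-one and Chernoff bounds; this part works for every $K>0$. The genuine gap is in the lower bound, exactly where you locate ``the real work''. Your plan takes as input a Brunet--Derrida-type theorem for the \emph{homogeneous competitive} system: existence of an asymptotic front speed $v_{\Delta t,\Delta x,K}(r)$ for the $K$-truncated model, its convergence to $\sqrt{2r}$ as $K\to\infty$ uniformly in $r\in[\rinf,\nr]$, and a per-window law of large numbers at fixed $K$. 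None of this is off-the-shelf for this lattice model: for the closest analogue, the $N$-BRW, the lower bound on the speed (with its $(\log N)^{-2}$ correction) is the content of the long and technical paper \cite{Berard2010}, and your ``saturated block regenerates an advanced saturated block'' renormalization step \emph{is} that theorem. So the proposal defers the entire difficulty of Theorem~\ref{thlowerbound1} to a step that is only gestured at; as written, it is not a proof.

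The paper's way around this is the rebooting trick of Section~\ref{construction:X0}, and it is the key idea you are missing. Every $\varphi(1)=\lfloor\log K\rfloor$ generations, kill all particles except one located at the rightmost occupied site. The rebooted process $\pr^0$ is dominated by $\pr$ by monotonicity, and a Galton--Watson/Doob estimate (Lemma~\ref{lem:tauK}) shows that a population started from a single particle cannot reach size $K$ within $\lfloor\log K\rfloor$ generations except with probability $K^{\log(1+\nr\Delta t)-1}\to 0$; hence the truncation at $K$ never acts inside a window, and each window increment of $\pr^0$ is that of a \emph{genuine} branching random walk with frozen rate --- no statement about the competitive system's speed is ever needed. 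Biggins' theorem, in a form uniform in the rate (Lemmas~\ref{lem:ptwcv}, \ref{lemunifcv} and~\ref{lemmom2}), gives an expected increment $\approx\sqrt{2r}\,\varphi(1)\Delta t$ per window once $\varphi(1)$ is large, which is precisely where $K_0$ comes from, together with a second moment bound (Lemma~\ref{ub:var}). Moreover, no per-window concentration is required: writing the endpoint positions through a Doob decomposition, the martingale part accumulates total variance $O\left(c^2\varphi(1)\eps T\right)$ over the $\sim T/(\eps\varphi(1)\Delta t)$ windows, which vanishes as $\eps\to0$ at fixed $K$, so Doob's maximal inequality gives concentration across windows for free. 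Note finally that this mechanism forces the window length to be $\lfloor\log K\rfloor$ generations (mesoscopic, $\ll\eps^{-1}$), not your $\eta/(\eps\Delta t)$: with windows of macroscopic length the population saturates, the truncation binds, and you are back to the unproven Brunet--Derrida input.
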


\jj{We conclude this section by introducing our definition of a good
	growth rate function.
	This definition will ensure the existence and uniqueness of a solution
	to the Cauchy problem \eqref{cauchypb} defined on $[0,\infty)$.}

As we shall see, it will be sufficient to prove our intermediate results
for growth rate functions $r$ satisfying strong regularity conditions. In this case,
we will rely on the following assumptions.
\begin{definition}\jj{We say that $r$ is a \textit{smooth growth rate function}}
	if
	\begin{itemize}
		\item[(i)] there exists $0<\ir<\nr$ such that
		\begin{equation*}
			%\label{eq:rbounds}
			\forall (t,x)\in [0,\infty)\times \mathbb{R},
			\quad \ir \leq r(t,x) \leq \nr,
		\end{equation*}
		\item[(ii)] we have
		\begin{equation}
			\label{eq:ub_grad_r}
			r\in \mathcal{C}^1((0,\infty)\times \mathbb{R}), \quad
			\text{and} \quad
			\exists M>0: \quad \|\partial r/\partial t\|_\infty
			+\|\partial r/\partial x\|_\infty <M.
		\end{equation}
	\end{itemize}
	\jj{These two conditions imply that,
		for all $(t_1,x_1),(t_2,x_2)\in [0,\infty]\times \mathbb{R}$,
		\begin{equation}
			\label{eq:approx_r}
			|\sqrt{2r(t_1,x_1)}-\sqrt{2r(t_2,x_2)}|\leq L (|t_1-t_2|+|x_1-x_2|),
			\quad \text{with} \quad L:=\frac{M}{\sqrt{2\ir}},
		\end{equation}
		and that \eqref{cauchypb} has a unique global solution.}
	
\end{definition}
\begin{definition}
	\label{def:good_rate}
	\jj{We say that $r$ is a \textit{good growth rate function} if there exists $0<\ir<\nr$ and two sequences of smooth growth rate functions
		$(\theta_n)$ and $(\Theta_n)$ defined on $[0,\infty)\times \mathbb{R}$ such that the following holds.}
	\begin{itemize}
		\item[(i)] For all $n\in\mathbb{N}$ and for all $(t,x)\in [0,\infty)\times \mathbb{R},$
		\begin{equation*}
			\ir \leq \theta_n(t,x) \leq r(t,x) \leq \Theta_n(t,x) \leq \nr.
		\end{equation*}
		\item[(ii)] 
		%   Furthermore, we assume that the functions $(\theta_n)$ and $(\Theta_n)$ are
		%   smooth in the sense that there are in $\mathcal{C}^1((0,\infty)\times \mathbb{R})$
		%   and 
		%   \begin{equation}
			%       \label{eq:bound_grad_theta}
			%       \forall n\in \mathbb{N}, \; \exists \jj{M_n}>0:
			%       \|\partial \theta_n/\partial t\|_\infty
			%       +\|\partial \theta_n/\partial x\|_\infty
			%       + \|\partial \Theta_n/\partial t\|_\infty
			%       +\|\partial \Theta_n/\partial x\|_\infty
			%       <M_n.
			%   \end{equation}
		
		%This assumption implies that for any initial condition $x_0$,
		%and for all $n\in\mathbb{N}$, there exists a unique couple
		Let
		$(\underline{x}_n,\bar{x}_n)$ denote the unique couple 
		of global solutions to the Cauchy problems
		\begin{equation}
			\label{eq:cp_approx}
			\dot{\underline{x}}_n=\sqrt{2\theta_n(t,\underline{x}_n(t))}, \quad
			\dot{\bar{x}}_n=\sqrt{2\Theta_n(t,\bar{x}_n(t))},
			\quad \underline{x}_n(0)=\bar{x}_n(0)=x_0.
		\end{equation}
		We assume that for all $T>0$,
		\begin{equation*}
			\sup_{t\in[0,T]}|\underline{x}_n(t)-\bar{x}_n(t)|\xrightarrow{n\to\infty}0.
		\end{equation*}
	\end{itemize}
	
\end{definition}

\begin{ex}
	\jj{Smooth growth rate functions are good
		growth rate functions. Functions  that only depends on one coordinate,
		and that are piecewise $\mathcal{C}^1$ in this coordinate are good growth
		rate functions. This example will be further developed in the next section.}
	\gr{ The function $r(t,x)=1+1_{x\geq (\sqrt 2+2)t/2}$ is not a good growth rate function: for $x_0=0$, $\underline{x}_n(t)=\sqrt 2 t$, while $\bar x_n(t)=2t$, for any sequence of approximations $\theta_n$, $\Theta_n$. 
		If $\phi:\mathbb R\mapsto \mathbb R_+$ denotes a smooth function supported on $[-(1-\sqrt 2/2),1-\sqrt 2/2]$ and of integral $1$, $\tilde r(t,x):=\int_{\mathbb R} \frac{\phi((x-y)/t)}{t}r(t,y)\,dy$  is another example of a growth rate function that is not good (we also have $\underline{x}_n(t)=\sqrt 2 t$ and $\bar x_n(t)=2t$ for $x_0=0$), even though the only discontinuity point of $\tilde r$ is $(0,0)$.}
\end{ex}

\subsection{Discussion and comparison with deterministic models}\label{sec:det}

In Theorem~\ref{main_theorem}, we consider $K$ large but fixed and let $\eps\to 0$.
A more classical large population asymptotics of \emph{individual-based models}
such as this one consists in letting first $K\to \infty$, then $\eps\to 0$.
\jj{If we divide the population size by $K$ and let $K\to\infty$}  (and let $\Delta x,\Delta t$ tend to 0 in a controlled way), we obtain a (deterministic) PDE  (\cite{fournier2004microscopic}, \cite{Champagnat2007}). This PDE is a reaction-diffusion equation of Fisher-KPP type governing the density of individuals $u^\vep(t,x)$:
\begin{equation}
	\label{eq:FKPP}
	u^\eps_t(t,x) = \frac{1}{2} u^\eps_{xx}(t,x) + r(\eps t,\eps x)f(u^\eps(t,x)),
\end{equation}
with $f(u) = u\boldsymbol 1_{u\le 1}.$ The limiting behavior of $u^\eps$ as $\eps\to0$ has been widely investigated in the PDE literature \cite{barles1989wavefront,Evans:1989to,Berestycki:2015aa} but also with probabilistic arguments \cite{Freidlin1986}. Introducing the change of variables $(t,x)\mapsto (t/\eps,x/\eps)$ and the WKB ansatz
\begin{align}
	\label{eq:scaling}
	u^\eps(t,x) = e^{-v^\eps(t/\eps,x/\eps)/\eps},
\end{align}
and assuming that, in a certain sense, $u^\eps(t,x) \to \boldsymbol 1_{x=0}$ as $t\to 0$, it can be shown that the function $v^\eps$ converges, when $\eps\to0$, to the viscosity solution $v$ of the following Hamilton-Jacobi equation (or, more precisely, a variational inequality) \cite{Evans:1989to}:
\begin{align}
	\label{eq:HJ}
	\begin{cases}
		\min\left(v_t(t,x) + \frac{1}{2}(v_x(t,x))^2 + r(t,x),v(t,x)\right) = 0
		\\
		v(t,x) \to \infty\boldsymbol 1_{x\ne 0},\,t\to 0.
	\end{cases}
\end{align}
As a consequence, $u^\eps(t/\eps,x/\eps)$ converges to $1$ (resp. $0$) uniformly on compact subsets of int($I$) (resp. $I^c$), where
\[
I = \{(t,x)\in[0,\infty)\times \mathbb R:v(t,x)=0\}.
\]
In particular, if $x^\eps(t)$ denotes the position of the front (for example, $x^\eps(t) = \sup\{x:u^\eps(t,x)\ge 1/2\}$), then, for fixed $t\ge0$,
\[
x^\eps(t/\eps) \to x^{HJ}(t): = \sup\{x:v(t,x)=0\},\quad \text{as $\eps\to 0$.}
\]
This approach has been extensively employed so far to deal with different types of heterogeneous environments: periodic  \cite{shigesada1997biological,Shigesada:1986aa,Xin:1991aa}, random  \cite{Berestycki:2015aa,Nadin:2016aa}, but does not provide an explicit propagation speed, except in very specific situations \cite{Hamel:2011ue}.
\jj{However, it is known that the following bound holds.}
\begin{lemma}
	\label{lem:comp_HJ_ODE}
	\jj{ Let $x(\cdot)$ denote the solution of the Cauchy problem \eqref{cauchypb}. We always have
		\[
		x^{HJ}(t) \ge x(t)\qquad\text{ for all $t\ge0$.}
		\]}
\end{lemma}
\begin{proof}
	To see this, recall the variational representation of the function $v$ \cite{Evans:1989to}:
	\begin{align}
		v(t,x) = \sup_{\tau} \inf_{z} \left\{\int_0^{t\wedge \tau(z)} \frac{z'(s)^2}{2} - r(t-s,z(s))\,ds\mid z(0)=x,\,z(t)=0\right\}.
		\label{eq:variational_representation}
	\end{align}
	Here, the infimum is over all $z\in H^1_{\text{loc}}([0,\infty);\mathbb R)$ and the supremum is over all \emph{stopping times}\footnote{In fact, general theory of variational inequalities (see e.g. \cite[p.6]{Bensoussan1982}) implies that for given $(t,x)$, the optimal stopping time in \eqref{eq:variational_representation} is given by
		\(
		\tau_{t,x}(z) = \inf\{s\in[0,t]: v(t-s,z(s)) = 0\},
		\)
		but we don't make use of this fact.} $\tau$, i.e.~maps $\tau:H^1_{\text{loc}}([0,\infty);\mathbb R)\to[0,\infty)$ satisfying for all $z,\tilde z$ and all $s\ge0$:
	\[
	\text{if $z\equiv \tilde z$ on $[0,s]$ and $\tau(z)\le s$, then $\tau(\tilde z) = \tau(z)$.}
	\]
	In order to show that $x^{HJ}(t) \ge x(t),$ it suffices to show that $v(t,x(t)) = 0$ for all $t\ge0$. Fix $t\ge0$. Define $z(s) = x(t-s)$ for $s\in[0,t]$. Then $z(0) = x(t)$, $z(t) = 0$ and for all $s\in[0,t]$, $(z'(s))^2/2 = r(t-s,z(s))$. Hence, for every stopping time $\tau$, the integral in \eqref{eq:variational_representation} equals 0. This shows that $v(t,x(t)) \le 0$ and thus $v(t,x(t)) = 0$ by non-negativity of $v$.
\end{proof}
It is easy to construct examples where $x^{HJ}(t) > x(t)$ for some or all $t>0$. This is for example the case when $r(t,x) = r_0(x)$ for some (strictly) increasing function $r_0$. Indeed, in this case, it is easy to construct an affine function $z$ such that $z(0) > x(t)$ and $z(t) = 0$ and such that the integrand in \eqref{eq:variational_representation} is negative for all $s\in(0,t)$, whence $v(t,z(0)) = 0$ and $x^{HJ}(t) \ge z(0) > x(t)$. It is even possible to construct an example in which $x^{HJ}$ has jumps: if we consider a function $r$ such that, for some $h>0$, $r(x)=c_1>0$ for $x<h$ and $r(x)=c_2>2c_1$ if $x\geq h>0$, and an initial condition $\mathbb{1}_{(-\infty,0]}$, we observe a jump in the wavefront at time $T_0:=\frac{h}{c_2}\sqrt{2(c_2-c_1)}<\frac{h}{\sqrt{2c_1}}$ (see Example 3 in \cite{Freidlin1985}). On the other hand, when $r_0$ is non-increasing, then $x^{HJ}(t) = x(t)$ for all $t\ge0$, see \cite{Evans:1989to,Freidlin1985} for a detailed discussion and other sufficient conditions such that $x^{HJ}(t) = x(t)$ for all $t\ge0$. In this case, one says that the \emph{Huygens principle} is verified, in that the propagation of the front is described by a velocity field, see Freidlin \cite{Freidlin1986} for a discussion of this principle and its relation with the Hamilton-Jacobi limit, that he relates to geometric optics.

%
%Freidlin \cite{Freidlin1986} also questioned the link 
%between the displacement of the invasion front $x^{HJ}$ and the Huygens principle. Indeed, Equation (\ref{eq:FKPP}) often models the propagation of a disturbance in physics and the Huygens principle proposes an explanation of the wavefront propagation of light. According to this principle, it can described by a certain velocity fiels $v(x,\pm 1)$ and the question asked by Freidlin is the following: is it always possible to find such a velocity field that describes the propagation of the wavefront $x^{HJ}$? Actually, this representation fails in many cases, some of them being pointed out in \cite{Freidlin1986}. As for Theorem \ref{main_theorem}, note that the Huygens principle is satisfied and that the velocity field is explicit,  given by (\ref{cauchypb}).

It has been observed previously that the viscosity solution method may be unsatisfactory, from a biological standpoint,  in some situations \cite{Hamel:2010wf,jabin2012small}. This has been dubbed the ``tail problem'' \cite{jabin2012small}: artifacts may be generated in the deterministic model by the \emph{infinite speed of propagation} \cite{Hamel:2010wf} of the solutions of (\ref{eq:FKPP}), where meaningless, exponentially small ``populations'' are sent to favourable regions by diffusion before the invasion front $x(t)$, accelerating the speed of propagation and possibly causing  jumps in the position of the invasion front.
%This phenomenon points out a crucial difference between the reaction-diffusion model (\ref{eq:FKPP}) and the stochastic one. 
Some adjustments were suggested to ``cut the tails'' in the deterministic model. For instance, one can add a square root term with a survival threshold parameter in the F-KPP equation \cite{jabin2012small,mirrahimi2012singular}. Another correction suggested in \cite{Hamel:2010wf} consists in adding a strong Allee effect in Equation (\ref{eq:FKPP}). Namely, they set the growth rate $f$ to be negative at low densities, leading to a bistable reaction-diffusion equation. For such equations, the Huygens principle is verified, as shown by Freidlin \cite{Freidlin1986}.

In this article, we propose to come back to the microscopic, or individual-based population model and study it under a double limit, where we let first the space-time scale $1/\eps$, then the carrying capacity $K$ go to infinity.

The discrete nature of our model has the effect of a ``cutoff'' which prevents the solution from being exponentially small in $1/\eps$. In terms of the function $v$, which arises in the limit after a hyperbolic scaling, the cutoff prevents the function $v$ from taking finite positive values and thus formally ``pushes it up to $\infty$'' whenever it is (strictly) positive. The main conceptual advantage of this approach compared to the PDE approach is that our model naturally satisfies the Huygens principle, without the need of ad-hoc modifications.

In order to determine which of the two models, with or without cutoff, is a better model for a given biological population, one might consider our microscopic model in the limit when $K$ and $1/\eps$ go to infinity together. Indeed, we conjecture that it is possible to interpolate between the two double limits in $K$ and $\eps$, when $K$ and $1/\eps$ go to infinity in such a way that $\log K$ is of the same order as $1/\eps$. This relation is indeed suggested by the hyperbolic scaling \eqref{eq:scaling}. It also appears in the proof of Theorem~\ref{main_theorem}, see for example Proposition~\ref{thupperbound}. Precisely, setting $\log K = \kappa/\eps$ for a constant $\kappa > 0$, we believe that the renormalized log-density is described in the limit by a solution $v^\kappa$ to a variational inequality similar to \eqref{eq:HJ}, but with an additional constraint imposing that $v^\kappa(t,x)\ge \kappa$ implies $v^\kappa(t,x) = +\infty$, similarly to \cite{mirrahimi2012singular}. The variational representation of the function $v^\kappa$ would then be
\begin{align*}
	v^\kappa(t,x) = \sup_\tau \inf_z\Big\{\int_0^{t\wedge \tau(z)} & \frac{z'(s)^2}{2} - r(t-s,z(s))\,ds\mid z(0)=x,\,z(t)=0,                                          \\
	& \forall u\le\tau(z): \int_u^{t\wedge \tau(z)} \frac{z'(s)^2}{2} - r(t-s,z(s))\,ds < \kappa\Big\}.
\end{align*}
We leave the details for future work.

\subsection{Explicit example and simulations} \label{sec:comparison:HJ}

In this section, we illustrate our results on an explicit example.
% for which the asymptotic speed of propagation of the Fisher-KPP equation \eqref{eq:FKPP} is strictly larger than the speed of the ODE \eqref{cauchypb}. 
Let $r(t,x) = r(x)$, where $r$ is the $1$-periodic function such that
\begin{equation}
	\label{hyp:rperiodic}
	\begin{cases}
		r(x)=\mu^+ & \forall x\in[0,\frac{1}{2}) \\
		r(x)=\mu^- & \forall x\in[\frac{1}{2},1)
	\end{cases}
\end{equation} for some constants $\mu^+$ and $\mu^-$ satisfying $0<\mu^-<\mu^+$.
\jj{This function is a good growth rate function, and the solution $x$ to
	the corresponding Cauchy problem \eqref{cauchypb} is the continuous piecewise affine function
	satisfying $x(0) = 0$ and $x'(t) = \sqrt{2r(x(t))}$ for each $t$ at which $x$
	is differentiable.}

First, consider the Fisher-KPP equation \eqref{eq:FKPP},  however, in order to be able to state results from the literature, assume that non-linearity appearing in \eqref{eq:FKPP} has the form $f(u)=u(1-u)$ instead --- we believe that this should not have an impact on what follows. For $\eps>0$, denote by $c^*_\eps$ the speed of propagation, i.e.~the smallest number such that a pulsating travelling front with speed $c$ exists for all $c\geq c^*_\eps$ \cite{Berestycki:2005tt}. It has been shown \cite{Nadin:2010vd} that $c_\eps^*$ is nonincreasing with respect to $\eps$ and bounded. Therefore, it converges to some limit $c_0^*$ as $\eps\to 0$. An explicit formula for $c_0^*$ has been computed in \cite{Hamel:2011ue} with the viscosity solution method. Under assumption  (\ref{hyp:rperiodic}), this expression is even more explicit and given \cite{Hamel:2010wf} by
\begin{eqnarray*}
	c^{*}_0&=& 2\sqrt{2}\frac{(\mu^+)^2+(\mu^-)^2+(\mu^+-\mu^-)\sqrt{\Delta}}{(\mu^++\mu^-+2\sqrt{\Delta})^{3/2}}\\
	\Delta&=&(\mu^+)^2+(\mu^-)^2-\mu^-\mu^+.
\end{eqnarray*}
On the other hand, the limit speed of the ODE \eqref{cauchypb} is the harmonic mean between the two speeds $\sqrt{2\mu^+}$ and $\sqrt{2\mu^-}$:
\begin{equation*}
	c^{ODE}=2\frac{\sqrt{2\mu^+\mu^-}}{\sqrt{\mu^-}+\sqrt{\mu^+}}.
\end{equation*}
Therefore, as noted in \cite{Hamel:2010wf}, $c^{ODE}$ is strictly smaller than the quadratic mean $\sqrt{\mu^++\mu^-}$, which corresponds to the homogenization limit $\nc^*:=\lim_{\eps\to\infty} c_\eps^*$ \cite{Smaily:2009vv}. Summarizing, we have, using again the fact that $c_\vep^*$ is non-increasing with respect to $\vep$,
\[
c_0^*\ge \nc^*> c^{ODE}.
\]

\jj{We have simulated our particle system for
	\begin{equation*}
		\nu_r = (1-r\delta t )\delta_0+r\Delta t \delta_1,
	\end{equation*}
	and $r$ as  in \eqref{hyp:rperiodic}}, with   $\mu^+=3$ and $\mu^-=0.1$. In this case, $c_0^* = 1.901..$ and $c^{ODE} = 0.756..$ The results of the simulations are shown in Figure \ref{fig:chap2}. They illustrate the behaviour of the process under the limits $\eps\to0$ and $K\to\infty$. We observe that when $\eps$ is fixed and $K$ goes to infinity, the position of the rightmost particle in a simulation of the process approaches the front of the solution of the PDE. On the other hand, when $K$ is fixed and $\eps$ tends to $0$, it tends to the solution of the ODE, in line with Theorem~\ref{main_theorem}.
\begin{figure}
	\begin{center}
		\begin{minipage}[t]{0.5\textwidth}
			\includegraphics[trim=11.5 2 7 3, clip,width=\textwidth]{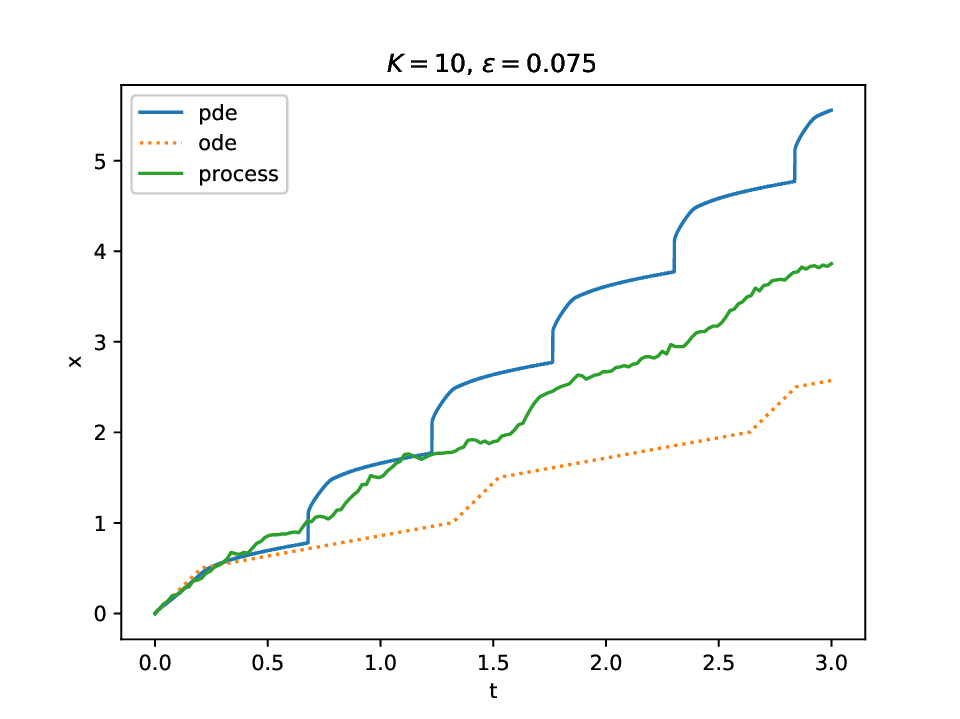}
		\end{minipage}% 
		\begin{minipage}[t]{0.5\textwidth}
			\includegraphics[trim=11.5 2 7 3, clip,width=\textwidth]{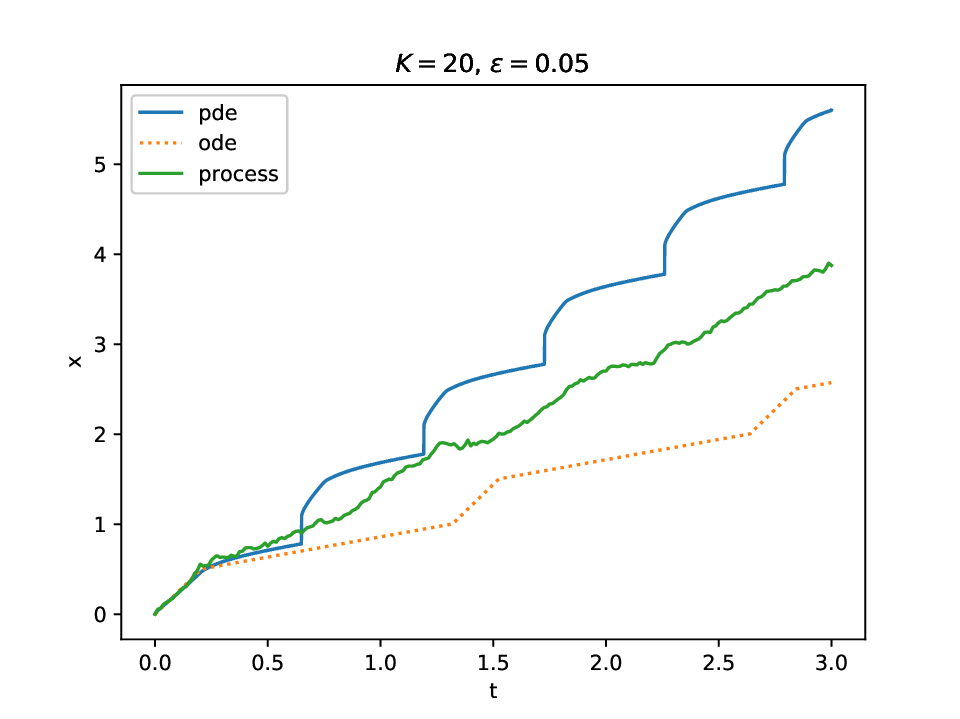}
		\end{minipage}
		
		\begin{minipage}[t]{0.5\textwidth}
			\includegraphics[trim=11.5 2 7 3, clip,width=\textwidth]{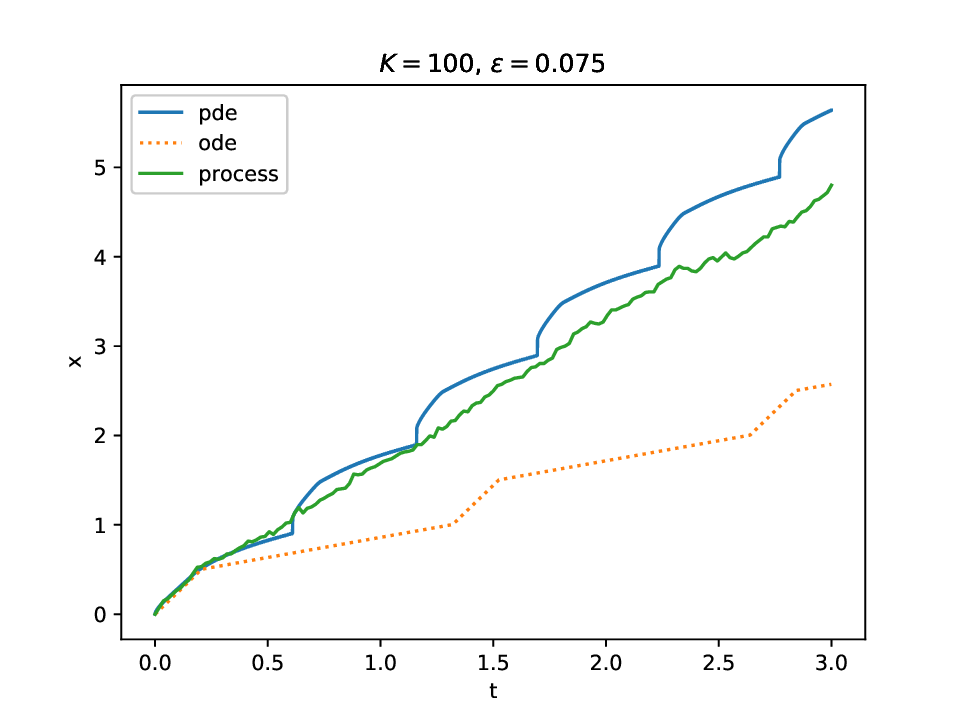}
		\end{minipage}%
		\begin{minipage}[t]{0.5\textwidth}
			\includegraphics[trim=11.5 2 7 3, clip,width=\textwidth]
			{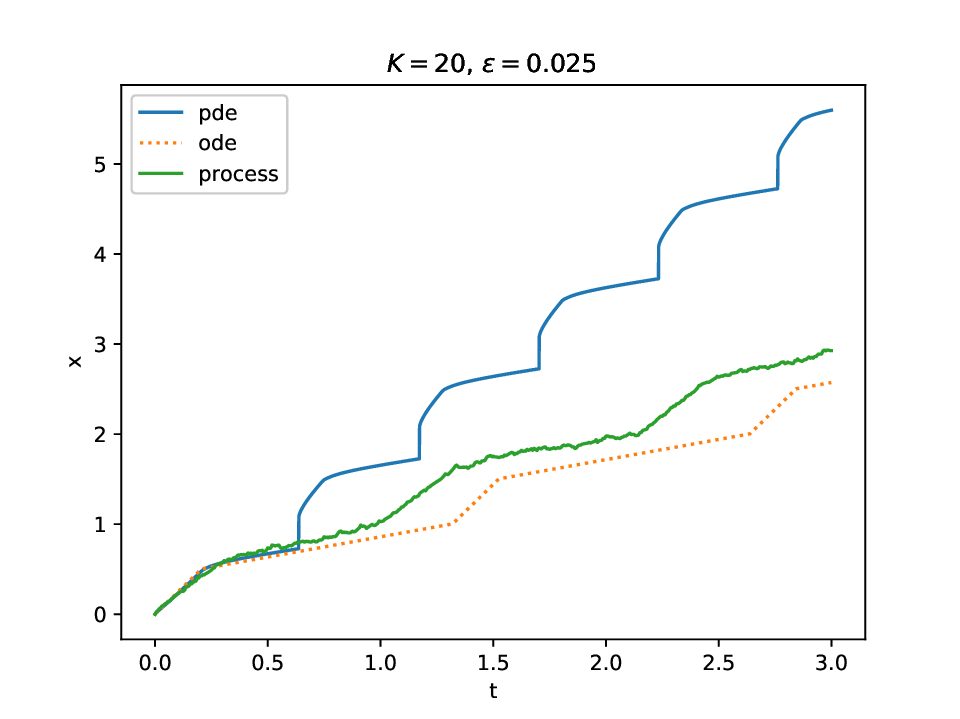}
		\end{minipage}
		
		\begin{minipage}[t]{0.5\textwidth}
			\includegraphics[trim=11.5 2 7 3, clip,width=\textwidth]
			{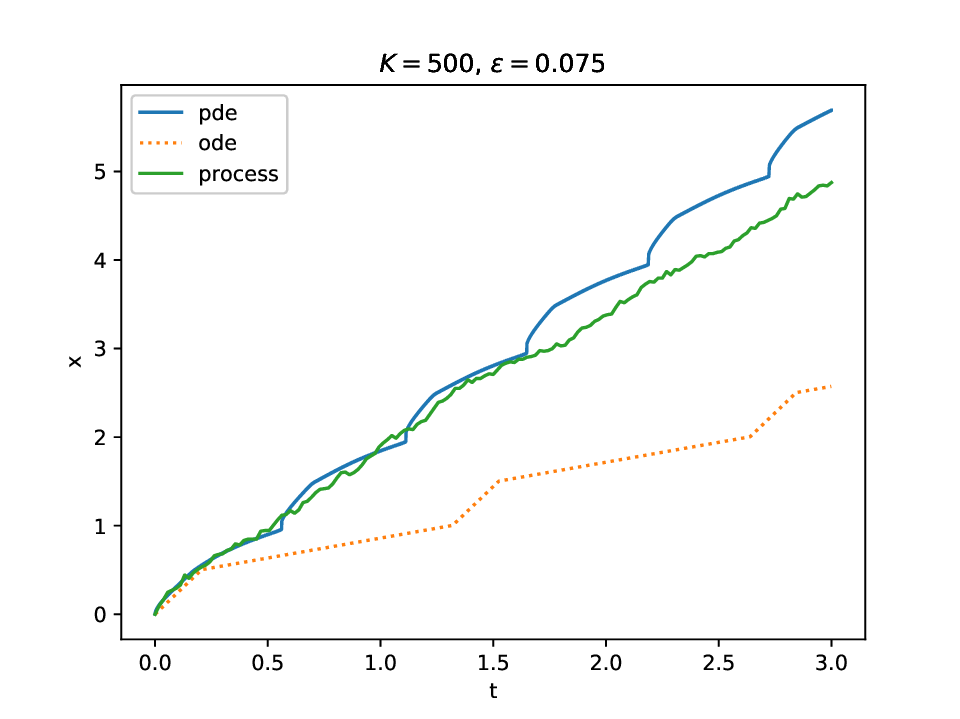}
		\end{minipage}%
		\begin{minipage}[t]{0.5\textwidth}
			\includegraphics[trim=11.5 2 7 3, clip,width=\textwidth]{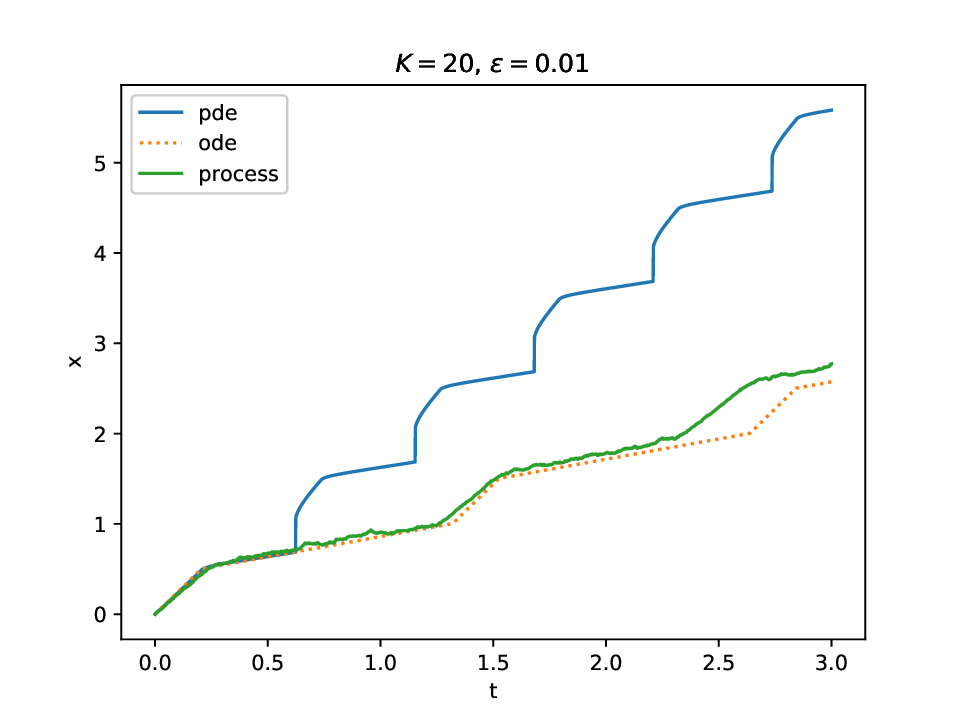}
		\end{minipage}
		\caption[The two double limits of the IBM (Chapter \ref{chap2})]{Rescaled position of the
			rightmost particle in simulations of the process defined in Section \ref{sec:simp:model}
			(green line) for different values of $(K,\eps)$. Left column: fixed $\eps$, increasing $K$,
			right column: fixed $K$, decreasing $\eps$. The growth rate $r$ is a $1$-periodic function
			of the form (\ref{hyp:rperiodic}) with $\mu^+=3$ and $\mu^-=0.1$, and the initial
			configuration is given by \jj{$n_k(i)=\mathbf{1}_{x_i\leq0}$ (which implies
				that $X_0^*=0$)}.
			The orange dotted line is the graph of the solution of the ODE (\ref{cauchypb}).
			The blue solid line is the position of the front $x(t)=\sup\{x\in\mathbb{R}:u(t,x)>\frac{1}{K}\}$ for $u$ solution to $u_t(t,x)=\frac{1}{2}u_{xx}(t,x)+r(\eps t,\eps x)u(t,x)(1-u(t,x))
			$, with initial condition $\chi(x)=\mathbf{1}_{x<0}$.
			\jj{Note that the orange dotted line lies below the blue one (see Lemma~\ref{lem:comp_HJ_ODE})}}
		\label{fig:chap2}
	\end{center}
\end{figure}

\subsection{Relation with other stochastic models}
The model we consider in this work is an example of a microscopic model for front propagation. Such models have seen considerable interest in the last two decades in mathematics, physics and biology. The prototypical model of front propagation is the Fisher-KPP equation, a semi-linear parabolic partial differential equation which admits so-called \emph{travelling waves}, i.e.~solutions which are stationary in shape and which travel at constant speed. Many microscopic models of front propagation (in homogeneous environments) can be seen as noisy versions of the Fisher-KPP equation, see e.g.~the reviews \cite{Panja2004,Kuehn2019}. A rich theory originating in the work of Brunet, Derrida and co-authors \cite{Brunet1997,Brunet2006,Brunet2006a} has put forward some universal asymptotic behavior when the local population density at equilibrium, $K$, goes to infinity. First, the speed of propagation of such systems admits a correction of the order $O((\log K)^{-2})$ compared to the limiting PDE. Second, the genealogy at the tip of the front is described by the Bolthausen--Sznitman coalescent over the time scale $(\log K)^3$, in stark contrast to mean-field models where the genealogy evolves over the much longer time scale $K$ and is described by Kingman's coalescent. These facts have been proven rigorously for several models \cite{Berard2010,Mueller2010,Berestycki2010,nbbm,Pain2015,Cortines2016a}.

Compared to the case of homogeneous environment, the model in heterogeneous environment considered in this paper has a different speed of propagation than its continuous limit, \emph{even in the limit of infinite population size}. A similar situation happens in homogeneous environment when the displacement is heavy-tailed. Such a microscopic model, with branching, competition and displacement with polynomial tails, was considered in \cite{Bezborodov2018}. For their model, the authors show the existence of a phase transition in the tail exponent of the displacement law: when the exponent is sufficiently large, the model grows linearly, whereas it grows superlinearly when the exponent is small. On the other hand, the continuous limit of the model, a certain integro-differential equation, always grows exponentially fast regardless of the exponent. This example, as well as the one considered in this paper, show that microscopic probabilistic models of front propagation or of spatial population dynamics can exhibit quite different qualitative behavior than their continuous limits. We believe this to be an exciting direction for future research.

Another body of literature is concerned with the behavior of locally regulated population models at equilibrium, i.e.~in the bulk. Basic questions like survival and ergodicity are often studied using two methods stemming from interacting particle systems: duality and/or comparison with simpler models such as directed percolation \cite{Etheridge2004,Hutzenthaler2007,Blath2007,Birkner2007,Berestycki2009a}. The genealogy of such systems is also of interest. Some related models from population genetics admit an explicit description of their genealogy in terms of coalescing, and sometimes branching random walks. Their behavior is therefore dimension-dependent, see e.g.~\cite{Barton2013} for a survey. For the one-dimensional model considered here, we expect the same to happen: the genealogy should be described by random walks coalescing when they meet at a rate proportional to $1/K$, where $K$ is the local population density at equilibrium. In particular, on the time-scale $K$, its scaling limit should be a system of Brownian motions which coalesce at a rate proportional to their intersection local time, whereas on a larger time-scale, corresponding to small population density, it should be described by the Brownian web. See \cite{Schertzer2016,Etheridge2017,Birkner:2019aa} for recent results on related models.

Finally, we point out that our model has been defined in such a way that it is a \emph{monotone} particle system. This property is crucial in order to compare the process to other, simpler processes. It is the analogue of the parabolic maximum principle for PDEs. Its absence causes significant technical difficulties, see for example \cite{brwnlc} which studies (homogeneous) branching random walks with non-local competition.

%%%%%%%%%%%%%%%%%%%%%%%%%%%%%%%%%%%%%%%%%%%%%%%%%%%%%%%%%%%%%%%%%%%

\section{General bounds on the speed of invasion}

\sectionmark{Definition and results}
\label{model}
\subsection{Generalization of the model}
\label{subdefmod}
\jj{
	In this section, we introduce a version of the model defined in
	the introduction and state intermediate results under weaker hypotheses on the dynamics of the particle system.}

As before, we consider a system of
interacting particles $\textbf{X}$ on the rescaled lattice $\Delta x\cdot\mathbb{Z}$,
evolving in discrete time $(t_{k})_{k\in\mathbb{N}}$, where $t_{k}=k\Delta t$.
The state of the system at time $t_{k}$ (or equivalently at generation $k$)
is described by its configuration $n_{k}:\mathbb{Z}\rightarrow \mathbb{N}$,
where the integer $n_{k}(i)$ counts the number of particles living on the
site $x_{i}=i\Delta x$. At each time step, the particles give birth to a
random number of children and die. After their birth, the offspring migrate
independently. As before, we denote by $X_k^*$ the position of the right-most
particle at time $t_k$.

The parameters of the model, $r$, $\eps$, $K$, are the same as in the introduction.
Furthermore, the migration step does not change: particles migrate according to
the discretized normal distribution $\mu$ defined in \eqref{displaw}.  However,
the reproduction and competition steps are generalized.
To this end, we suppose that we are given a family $(\nu_{r,n,K})$ of probability distributions on $\mathbb N$. Reproduction and competition are then contracted into a single step as follows: if a site $x_i$ is inhabited by $n$ particles at time $t_k$, these particles are replaced by a random number of offspring distributed according to $\nu_{r(\eps t_{k},\eps x_{i}),n_{k}(i),K}$, independently on all sites. Once the population is renewed on all sites, the particles migrate independently according to $\mu$. The resulting configuration is $n_{k+1}$.

\jj{The proof of Theorem~\ref{main_theorem} requires to establish an upper bound
	(Proposition~\ref{thupperbound})
	and a lower bound (Proposition~\ref{thlowerbound1}) on the position of the rightmost particle $X^*$. The upper bound
	holds under relatively weak assumptions, which we now introduce.}

\begin{hyp} \label{Assumption1}
	\jj{
		We assume that  $\nu_{r,n,K}$ is stochastically
		dominated by $\tilde{\nu}_{n,r,K}$, where $(\tilde{\nu}_{n,r,K})$ is  a family of reproduction laws
		satisfying the three following conditions:
		\begin{enumerate}
			\item 
			$(\tilde \nu_{r,n,K})$ is increasing with respect to $r$ and
			with respect to $n$.
			\item there exists a family of discrete probability distributions on $\mathbb{N}$,
			denoted by $(P_r)$,  such that $(P_{r})$ is increasing with respect
			to $r$,
			$\mathbb{E}[P_{r}]=1+r\Delta t$ and, for all $K>0$,  $P_r^{*n}$ stochastically dominates $\tilde{\nu}_{r,n,K}$.
			\item there exists a probability distribution on $\mathbb{N}$, denoted by
			$\bar{\nu}$, with finite expectation and such that $\bar{\nu}^{*K} $
			stochastically dominates $\tilde{\nu}_{r,n,K}$, for all $n\in\mathbb{N}$.
		\end{enumerate}
	}
\end{hyp}
\jj{In words, the first point ensures that the particle system is stochastically
	dominated by a monotone system (though it is not itself monotone),
	and the third point that all reproduction
	laws are uniformly bounded by a law that only depends  on the carrying capacity of the system. }

\jj{The proof of the lower bound requires stronger assumptions:}
\begin{hyp} \label{Assumption2}
	Let $(\nu_r)$ be a family of reproduction laws such that
	\begin{enumerate}
		\item $(\nu_{r})$ is continuous and increasing with respect to $r$,
		\item $\nu_r(0)=0$,
		\item $\sum k \nu_r(k)=1+r\Delta t$.
		
	\end{enumerate}
	\jj{Let $(Y_i)_{i\in \mathbb{N}}$ be a sequence of i.i.d. random variables of
		law $\nu_r$ and let $\tilde{\nu}_{n,r,K}$ denote the law of the random variable}
	\begin{equation*}
		\left(\sum_{i=1}^n Y_i
		\right)\wedge K.
	\end{equation*}
	\jj{ We assume that  $\nu_{r,n,K}$ stochastically
		dominates the $\tilde{\nu}_{n,r,K}$}.
\end{hyp}

\begin{rem}
	Note that under Assumption \ref{Assumption2} the process cannot die out.
	This is a simplifying assumption without which the proof of
	Proposition~\ref{thlowerbound1}
	would be more involved.
\end{rem}
\begin{rem}
	\jj{While neither Assumption~\ref{Assumption1} nor~\ref{Assumption2} requires
		the interacting particle system to be
		monotone, both imply that the system is bounded by monotone systems.}
	\jj{Note that Assumption~\ref{Assumption1} does not specify the precise form of
		the competition mechanism for the upper-bound system, in contrast to
		Assumption~\ref{Assumption2}.
		In addition, the process can go extinct under Assumption~\ref{Assumption1}, but not under
		Assumption~\ref{Assumption2}.
		
		As we will see, the proof of Proposition~\ref{thupperbound} and Proposition~\ref{thlowerbound1}
		rely on couplings with branching random walks.
		For the upper bound, these couplings only require Assumption~\ref{Assumption1}.
		The lower bound is more delicate and Assumption~\ref{Assumption2}
		proves particularly useful. Indeed, under Assumption~\ref{Assumption2},
		the system behaves like a branching system as long as the population size at
		each site remains below $K$.
	}
\end{rem}
\begin{rem}
	\jj{The system defined in Section~\ref{sec:simp:model} satisfies
		Assumption~\ref{Assumption1} and Assumption~\ref{Assumption2}.}
\end{rem}
\begin{ex}
	Suppose $\Delta t \le 1/\nr$. The family of distributions $(\nu_r)$ with
	$\nu_r=r\Delta t \delta_2+(1-r\Delta t)\delta_1$ satisfies Assumptions~\ref{Assumption2}.1,
	\ref{Assumption2}.2 and \ref{Assumption2}.3.
\end{ex}
\begin{ex}
	The family of Poisson distributions
	with parameters $$\lambda_{n,r,K} = n\left(1+r\Delta t \left(1-\frac{n}{K}\right)\right)_+$$
	forms a family of reproduction laws $(\nu_{r,n,K})$ which satisfies
	Assumption~\ref{Assumption1}
	but which does not satisfy Assumption \ref{Assumption2}.
	Consider the family of Poisson distributions with parameters
	$\tilde{\lambda}_{n,r,K}= (n(1+r\Delta t))\wedge
	(K(1+\nr\Delta t)^2/(4\rinf\Delta t)).$ One can easily check that this
	family of distribution satifies the three conditions of
	Assumption~\ref{Assumption1} (point 2 is satisfied with $P_r$ the Poisson
	distribution of parameter $1+r\Delta t$ and point 3 is satisfied with
	$\bar \nu$  the Poisson distribution with parameter
	$(1+\nr\Delta t)^2/(4\rinf\Delta t)$), and that it stochastically dominates
	$(\nu_{r,n,K})$.
	%Indeed, point 1 is satisfied with $P_r$ the Poisson distribution of parameter
	%$1+r\Delta t$. Moreover, a direct calculation shows that $\lambda_{n,r,K}
	%	\leq K(1+\nr\Delta t)^2/(4\rinf\Delta t)$, so that point 2 is satisfied
	%(we choose $\bar \nu$ to be the Poisson distribution with parameter $(1+\nr\Delta t)^2/(4\rinf\Delta t)$).
	On the other hand, this family does not satisfy any of the points of Assumption~\ref{Assumption2}.
	
	\jj{If $(P_r)$ is as in Assumption~\ref{Assumption1},
		the family of reproduction laws $(P_r^{*n})$ satisfies Assumption~\ref{Assumption2}
		but does not satisfy Assumption \ref{Assumption1}. }
\end{ex}

\subsection{Results}
% In Theorem~\ref{Thintro}, we have claimed a control on the propagation speed of the process $(n_k)_{k\in \mathbb{N}}$. In this section, we generalise this result: Theorem~\ref{thupperbound}, under Assumption~1, provides an explicit upper bound on the propagation of the population, while Theorem~\ref{thlowerbound1}, under Assumption~2, provides a lower bound. These theorems prove in turn Theorem~\ref{Thintro}, since the processes considered then satisfy both Assumption~1 and Assumption~2. %   The proof of Theorem \ref{Thintro} is divided in two intermediate results stated below, which respectively correspond to an upper and a lower bound on the process $(X_k)$.  

\begin{proposition}[Upper bound on the propagation speed] \label{thupperbound}
	\jj{
		Assume that $r$ is a good growth rate function and that Assumption~\ref{Assumption1} holds.
		Assume that $X_0^*=0$.
		%Let $\Delta t\leq \left(\nr^{-1}\wedge \frac{1}{2}\right)$ and
		%$\Delta x\leq \frac 1 5\sqrt{2(\log 2) \ir}\Delta t$.
		Let $T>0$ and $\delta>0$.
		There exist $\Delta t_\delta>0$, $C_\delta>0$ such that, if
		\begin{equation*}
			%\label{Assumptiondtdx:1}
			\Delta t< \Delta t_\delta, \quad \Delta x<C_\delta \Delta t\tag{H},
		\end{equation*}
		the following holds:
		there exists $\alpha>0$ and $\eps_0>0$ such that,	}
	\begin{equation}
		\forall \eps<\eps_0, \quad \forall K\geq 1, \quad
		\mathbb{P}\left(\exists k\in \left\llbracket0,
		\left\lfloor \frac{T}{\eps \Delta t}\right\rfloor \right\rrbracket :
		\eps X^*_k>x(k\eps\Delta t)
		+\delta\right)
		\leq Ke^{-\frac{\alpha}{\eps}}.
	\end{equation}
\end{proposition}
\begin{proposition}[Lower bound on the propagation speed]\label{thlowerbound1}
	Assume that $r$ is a good growth rate function and that Assumption~\ref{Assumption2} holds. Assume that $X_0^*=0$.
	Let $\delta >0$ and $T>0$. There exist $\Delta t_\delta>0$, $C_\delta>0$ such that, if
	\begin{equation*}
		\Delta t< \Delta t_\delta, \quad \Delta x<C_\delta \Delta t\tag{H}, %\label{Assumptiondtdx:1}
	\end{equation*}
	the following holds: there exists $K_0>0$ and $\eps_0>0$ such that,
	\jj{\begin{equation*}
			\forall K>K_0, \quad 	\forall \eps<\eps_0, \quad
			\mathbb{P}\left(\exists k \in\left \llbracket 0,  \left\lfloor \frac{T}{\eps\Delta t}\right\rfloor \right\rrbracket : \eps X^*_k< x(k\eps \Delta t)-\delta \right)\leq \sqrt{\vep}.
	\end{equation*}}
	% and  $x$ the unique global solution of  (\ref{cauchypb}) on $[0,T]$.
\end{proposition}

\begin{rem} \label{rem:deltax}
	\jj{
		We believe that Proposition~\ref{thlowerbound1} holds in fact for every $K\ge 1$. Indeed, choosing $\Delta x$ small has a similar effect than choosing $K$ large in that it increases the density of particles in a unit interval at equilibrium (which one expects to be of the order of $K/\Delta x$). We believe that one could extend the proof of Proposition~\ref{thlowerbound1}  to this case, at the expense of more elaborate arguments, see Remark~\ref{rem:K_Deltax} below.
	}
	
\end{rem}

\begin{rem}
	Note that in both theorems above, $\vep_0$ is chosen small,
	depending on all the other parameters. This indeed corresponds to the
	limit where we first let $\vep\to0$, then $K\to\infty$ (or $\Delta x\to 0$,
	see Remark~\ref{rem:deltax}), as mentioned in Section~\ref{sec:det}.
	
\end{rem}

\begin{rem}
	\jj{Our results depend on the initial configuration only through the position of the rightmost particle.}
\end{rem}
%%%%%%%%%%%%%%%%%%%%%%%%%%%%%%%%%%%%%%%%%%%%%%%%%%%%%%%%%%%%%%%%%%%

\subsection{Notation}\label{sec:not}
We introduce some notation that will be used throughout the article.
For each particle $u$ living in the process $\mathbf{X}$, we denote by $X_u$ its position. The process $\mathbf{X}$ will be compared to several branching random walks (BRW). Likewise, for a branching random walk $\mathbf{\Xi}$, $\Xi_u$ refers to the position of the particle $u$. In both cases, we denote by $|u|$ the generation of $u$.
We further define the following constants:
\begin{equation}
	\label{def:constant}
	\gamma=\log(2) \quad \textnormal{ and }\quad C_0=16\gamma^{-\frac{1}{2}}
	.
\end{equation}
\subsection{Structure of the proof}
\jj{
	In Section \ref{coupsect}, we state a general coupling lemma allowing us to compare
	systems with different reproduction mechanisms. In particular, it allows us to
	compare the interacting particle system with branching random walks.
	
	Section~\ref{partub} contains the proof of Proposition~\ref{thupperbound}
	(upper bound). The proof uses a Trotter-Kato-type scheme and local comparisons
	with branching random walks.
	The proof of Proposition~\ref{thlowerbound1} (lower bound) is given in
	Section~\ref{partlow} and uses a martingale argument together with first and
	second moment estimates.
	
	Appendix~\ref{SecBRW} recalls  known results on branching random walks and
	provides explicit estimates on the rate functions of the branching random walks
	used in this article. Finally, Appendix~\ref{sec:EDO} recalls known results
	on the Euler scheme for the solution $x$ of \eqref{cauchypb}.
}

%%%%%%%%%%%%%%%%%%%%%%%%%%%%%%%%%%%%%%%%%%%%%%%%%%%%%%%%%%%%%%%%%%%

\section{A coupling lemma}
\label{coupsect}
Let $S^1$ and $S^2$ be two systems of interacting particles on $\mathbb{Z}$ whose configurations, $(n_{k}^{1})$ and $(n_{k}^{2})$, evolve as follows. At time $t_{k}$, the particles of $S^1$ (resp. $S^2$) living on $x_{i}$, are replaced by a random number of offspring distributed according to $(p_{l}^{1}(n^{1}_{k}(i),x_{i},t_{k}))_{l\in\mathbb{N}}$ (resp. $(p_{l}^{2}(n^{2}_{k}(i),x_{i},t_{k}))_{l\in\mathbb{N}}$). Once the population is renewed on each site, the particles migrate independently according to $\mu$ in both processes. Furthermore, let $\tau$ be a stopping time for the process $S^2$ (which may be infinite). We say that $S^1$ \emph{dominates} $S^2$ until time $\tau$ if
\begin{equation*}
	\mathbb{P}(\forall i\in \mathbb{Z}, \forall k\le \tau: n_k^{1}(i)\geq n_k^{2}(i))=1. \label{domstoch}
\end{equation*}
If $\tau = +\infty$ almost surely, we simply say that $S^1$ dominates $S^2$.

The following lemma establishes a coupling $(\tilde{S}^1,\tilde{S}^2)$ of $S^1$ and $S^2$ such $\tilde{S}^1$ dominates $\tilde{S}^2$, provided that the reproduction laws $p^1$ and $p^2$ and the initial conditions meet certain conditions.

\begin{lemma}
	\label{lem:coupl}
	Assume that
	\begin{enumerate}
		\item The initial configurations  satisfy $n^1_0(i)\geq n^2_0(i),$ for all $ i\in \mathbb{Z}$.
		\item \jj{The system $S^1$ is monotone:
			for all $(m,n)\in\mathbb{N}^{2}$ such that $n\geq m,$
			\begin{equation}
				\sum_{q\geq l}p_q^1(n,t_k,x_i)\geq
				\sum_{q\geq l}p_q^1(m,t_k,x_i),
				\quad \forall l\in \mathbb{N}.
				\label{Assumption3}
		\end{equation}}
		\item Almost surely with respect to the process $S^2$, for every $k<\tau$, $i\in \mathbb{N}$,
		\begin{equation}
			\label{Assumption4}
			\quad  \sum_{q\geq l}p_q^1(n_k^2(i),t_k,x_i)\geq
			\sum_{q\geq l}p_q^2(n_k^2(i),t_k,x_i),  \quad
			\forall l\in\mathbb N,\; \forall i\in \mathbb{Z}.
		\end{equation}
	\end{enumerate}
	Then, there exists two processes $\tilde{S}^1$ and $\tilde{S}^2$,
	distributed as $S^1$ and $S^2$, such that $\tilde{S}^1$ dominates
	$\tilde{S}^2$ until time $\tau$.
\end{lemma}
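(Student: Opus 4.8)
The plan is to build the coupled pair $(\tilde S^1,\tilde S^2)$ as a Markov chain on pairs of configurations, constructed generation by generation, and to prove by induction on $k$ that the pointwise domination $n_k^1(i)\ge n_k^2(i)$ is propagated from one generation to the next as long as $k<\tau$. The only tool needed is the elementary equivalence between stochastic domination and monotone coupling on a totally ordered space: if a law $P$ on $\mathbb N$ dominates a law $Q$, then, writing $F_P^{-1},F_Q^{-1}$ for the quantile functions and taking a single $U\sim\mathrm{Unif}(0,1)$, the pair $(F_P^{-1}(U),F_Q^{-1}(U))$ has marginals $P,Q$ and satisfies $F_P^{-1}(U)\ge F_Q^{-1}(U)$ almost surely.

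At generation $0$ we start from the given initial configurations, which satisfy $n_0^1\ge n_0^2$ by the first assumption of the lemma. Suppose the coupling has been built up to time $k$, with correct marginals and with $n_k^1(i)\ge n_k^2(i)$ for all $i$ on the event $\{k\le\tau\}$. Since $\tau$ is a stopping time for $S^2$, the event $\{k<\tau\}$ is measurable with respect to the history of $\tilde S^2$ up to time $k$; on its complement we couple the two systems arbitrarily (say independently), preserving marginals, so no domination is required there. On $\{k<\tau\}$ (where $n_k^1\ge n_k^2$ holds by the inductive hypothesis) we proceed in two substeps. For \emph{reproduction}, fix a site $i$ and set $m=n_k^2(i)\le n_k^1(i)=n$. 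By \eqref{Assumption3}, $p^1(n,t_k,x_i)$ dominates $p^1(m,t_k,x_i)$; by \eqref{Assumption4}, valid because $k<\tau$, $p^1(m,t_k,x_i)$ dominates $p^2(m,t_k,x_i)$; so by transitivity $p^1(n_k^1(i),\cdot)$ dominates $p^2(n_k^2(i),\cdot)$, and the quantile coupling (one uniform per site, independent across sites) yields offspring numbers $\tilde N^1(i)\ge\tilde N^2(i)$ with the correct marginals. For \emph{migration}, on each source site $i$ we match the $\tilde N^2(i)$ offspring of $\tilde S^2$ with $\tilde N^2(i)$ of the $\tilde N^1(i)\ge\tilde N^2(i)$ offspring of $\tilde S^1$, let each matched pair perform the same $\mu$-distributed jump, and let the remaining $\tilde N^1(i)-\tilde N^2(i)$ offspring of $\tilde S^1$ jump independently according to $\mu$. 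Every particle of $\tilde S^2$ arriving at a target site $i'$ is then accompanied by a particle of $\tilde S^1$ arriving at $i'$, so summing over source sites gives $n_{k+1}^1(i')\ge n_{k+1}^2(i')$, which closes the induction since $\{k+1\le\tau\}=\{k<\tau\}$.

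It remains to check the marginals, which is the only substantive point beyond the bookkeeping above. At every step the transition kernel of the coupled chain has first marginal equal to the reproduction--migration law of $S^1$ started from $n_k^1$, and second marginal equal to that of $S^2$ started from $n_k^2$: for reproduction because the quantile coupling preserves marginals, and for migration because each individual particle still jumps marginally according to $\mu$ while all jumps \emph{within} a single system remain mutually independent (the matching introduces dependence only \emph{between} the two systems, not inside either one). A standard argument then shows that $\tilde S^1$ and $\tilde S^2$ are Markov chains with the prescribed kernels, hence distributed as $S^1$ and $S^2$; the key observation is that, because $\{k<\tau\}$ is a functional of $\tilde S^2$ alone, letting the coupling rule depend on it alters the joint law but leaves each marginal transition intact. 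In particular $\tau$ is still a stopping time of $\tilde S^2$.

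The main obstacle I anticipate is exactly this interaction with the stopping time: one must ensure that the decision to apply the monotone coupling at step $k$ uses only information available at time $k$ and is a functional of $\tilde S^2$, so that the construction is legitimate and neither marginal is disturbed. The monotone coupling of the reproduction and migration steps itself is routine once the transitivity $p^1(n_k^1(i),\cdot)\succeq p^2(n_k^2(i),\cdot)$ is in place.
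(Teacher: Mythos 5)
Your proposal is correct and follows essentially the same route as the paper: a quantile (inverse-CDF) coupling of the offspring counts driven by one uniform per site, justified by the transitivity chain $p^1(n_k^1(i),\cdot)\succeq p^1(n_k^2(i),\cdot)\succeq p^2(n_k^2(i),\cdot)$ from conditions \eqref{Assumption3} and \eqref{Assumption4}, together with shared migration jumps for matched particles (the paper's blue/red colouring) and an induction over generations. Your explicit branching on the event $\{k<\tau\}$, coupling independently afterwards, is only a minor (arguably cleaner) variant of the paper's device of first constructing the coupling as if $\tau=+\infty$ and then transferring $\tau$ to the coupled space.
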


\begin{proof}
	We first assume that $\tau= +\infty$. We construct a probability space
	supporting two processes $\tilde{S}^1$ and $\tilde{S}^2$, distributed as
	$S^1$ and $S^2$, such that $\tilde{S}^1$ dominates $\tilde{S}^2$.
	We first consider a set of particles organised according to $n^1_0$. On each
	site $x_i$,  $n^2_0(i)$ of these particles are coloured blue. The remaining
	individuals are coloured red. The initial population of the  process
	$\tilde{S}^2$ (resp.~$\tilde{S}^1$) is defined as the set of blue
	(resp.~red and blue) particles.
	
	We then construct the first generation ($k=1$) as follows. Consider a site
	$x_i$ such that $n_0^1(i)=\tilde{n}^{1}_0(i)\neq 0$: this site is inhabited
	by $k_1$ red particles, $k_2$ blue particles such that $k_{1}+k_{2}>0$. Draw
	a uniform random variable $\mathcal{U}$ on $[0,1]$ and consider
	$l_{1}(\omega)$ and $l_{2}(\omega)$, two integers defined by
	\begin{equation*}
		l_1(\omega)=\max\left\{n\in \mathbb{N}: \mathcal{U}(\omega)\geq
		\sum_{q=1}^{n-1}p_q^1(k_1+k_2,t_1,x_i)\right\} \,, \end{equation*}
	and likewise,
	\begin{equation*}
		l_2(\omega)=\max\left\{n\in \mathbb{N}: \mathcal{U}(\omega)\geq
		\sum_{q=1}^{n-1}p_q^2(k_2,t_1,x_i)\right\}.
	\end{equation*}
	\jj{By definition of $l_1$ and $l_2$, if $l_2\geq l_1-1$,
		Equations  (\ref{Assumption4}) and (\ref{Assumption3}) imply that}
	\jj{ \begin{multline}
			\label{eq:proof_coupling}
			\mathcal{U}(\omega)>\sum_{q=1}^{l_2}p^2_q(k_2,t_1,x_i)\geq
			\sum_{q=1}^{l_1-1}p^2_q(k_2,t_1,x_i)
			\geq
			\sum_{q=1}^{l_1-1}p^1_q(k_2,t_1,x_i)\\
			\geq  \sum_{q=1}^{l_1-1}p^1_q(k_1+k_2,t_1,x_i)
			\geq\mathcal{U}(\omega).
	\end{multline}}
	\jj{Thus, we deduce that $l_1(\omega)\geq l_2(\omega)$.
		We then generate $l_1(\omega)$ individuals on $x_i$ and $l_2(\omega)$ of
		them are coloured blue. The remaining ones are painted red.} We repeat
	this construction until the population is renewed on each non-empty
	site $x_i$. Then, all the particles (red and blue ones) migrate independently
	according to $\mu$. After the migration phase, the first generation of
	$\tilde{S}^1$ (resp. $\tilde{S}^2$)  is  the set of blue (resp. red and blue) particles.
	The following generations are constructed similarly by induction on $k$.
	
	If $\tau$ is an arbitrary stopping time, since it is a measurable function
	of the process $S^2$, it can be transferred to the probability space
	constructed above, to become a stopping time for the process $\tilde S^2$.
	The above chain of inequalities then still hold for every $k < \tau$ and
	the statement follows.
\end{proof}
\begin{rem} 
	\label{rem:coupling_lemma}
	The same result holds when the roles of $S^1$ and $S^2$ are swapped.
	Indeed, a similar coupling can be constructed if $\tau$ is a stopping time
	for the process $S^1$, $S^2$ is monotone and \eqref{Assumption4} holds
	for any configuration of the system $S^1$.
	In this case, the proof goes along the same lines,
	with \eqref{eq:proof_coupling} replaced by the following inequalities:
	\jj{ \begin{multline*}
			\mathcal{U}(\omega)>\sum_{q=1}^{l_2}p^2_q(k_2,t_1,x_i)\geq
			\sum_{q=1}^{l_2}p^2_q(k_1+k_2,t_1,x_i)
			\geq
			\sum_{q=1}^{l_2}p^1_q(k_1+k_2,t_1,x_i)\\
			\geq  \sum_{q=1}^{l_1-1}p^1_q(k_1+k_2,t_1,x_i)
			\geq\mathcal{U}(\omega).
	\end{multline*}}
\end{rem}

%%%%%%%%%%%%%%%%%%%%%%%%%%%%%%%%%%%%%%%%%%%%%%%%%%%%%%%%%%%%%%%%%%%

\section{Proof of Proposition \ref{thupperbound}: Upper bound on the propagation speed}
\sectionmark{Upper bound on the invasion speed}
\label{partub}

In this section, we give an upper bound on the invasion speed of the process $\textbf{X}$  under Assumption \ref{Assumption1}. The idea of the proof of Proposition \ref{thupperbound} is to first establish a coupling between $\pr$ and a process without competition. The absence of competition in this process then allows to compare it with several branching random walks, for which we can easily control the position of their rightmost particles (See Section \ref{sec:BRW}).

\subsection{An estimate on the branching random walk}\label{sec:BRW}
Let $X_1$ be a random variable of law $\mu$. We define the function $\Lambda$ by
\begin{equation}
	\mathbb{E}\left[e^{\lambda X_1}\right]=e^{\Lambda(\lambda)}, \quad \forall \lambda \in \mathbb{R},
\end{equation}
and denote by $I$ its convex conjugate:
\begin{equation}
	I(y)=\sup_{\lambda \in \mathbb{R}} \left(  \lambda y -\Lambda (\lambda)\right), \quad \forall y\in \mathbb{R}.
	\label{convconju}
\end{equation}
Note that $\mu$ from \eqref{displaw} has super-exponential tails and that its support is unbounded both to the right and to the left, which implies that both $\Lambda$ and $I$ are finite and strictly convex on $\mathbb{R}$. Furthermore, $I$ has a minimum at $\mathbb{E}[X_1]=0$ (and $I(0)=0$) so
%$I(0)=I(\mathbb{E}[X_1])=0$ 
that $I$ is decreasing on $(-\infty,0)$ and increasing on $(0,\infty)$, as a consequence of strict convexity. We also define
\begin{equation}\label{eq:Lambda0}
	\Lambda_{0 }(\lambda)  :=  \log\int _\mathbb{R} \frac{1}{\sqrt{2\pi \Delta t}}e^{\frac{-x^2}{2 \Delta t}+\lambda x}dx=\frac{ \Delta t}{2} \lambda ^2,  \quad \forall \lambda\in \mathbb{R}
\end{equation}
and remark that  $I_0(y):=\sup_{\lambda \in \mathbb{R}}\left( \lambda y -\Lambda_0(\lambda)\right)$ is given by
\begin{equation}
	I_0(y)=\left(\frac y{\Delta t}\right) y - \Lambda_0\left(\frac y{\Delta t}\right)=\frac{y^2}{2\Delta t}.
	\label{io}
\end{equation}
Thus, for all $m>1$, the equation  $I_0(c)=\log(m)$ has a unique positive solution,
\begin{equation}
	c_0=\sqrt{2 \Delta t\,\log m}.
	\label{def:c0}
\end{equation}
Since $I$ increases on $(0,\infty)$, the equation $I(c)=\log(m)$ also has a unique solution $c\in(0,\infty)$. In Appendix \ref{secconvconj}, we state several results (Lemma \ref{lem51} to \ref{lem:6}) on the regularity of $I$ and $c$. These results lead to a first rough estimate on $c$.
\begin{lemma} \label{lem:estc1}
	Let $\Delta t<\nr^{-1}$ and $\Delta x<\frac{1}{16}\sqrt{2\gamma \ir }\Delta t$. Let $\br\in[\underline{r},\nr]$ and $\bc$ be the unique positive solution of $I(\bc)=\log(1+\br\Delta t).$ Then, \begin{equation}
		\frac{1}{2}\sqrt{2\gamma\rinf}\Delta t<\bc<2\sqrt{2\nr}\Delta t. \label{est:c1}
	\end{equation}
	\label{lem:ccompact}
\end{lemma}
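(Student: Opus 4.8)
The plan is to pin down $\bar c$ by sandwiching it between the corresponding quantities for the \emph{Gaussian} rate function $I_0$, for which everything is explicit via \eqref{io}, and then to check that the discretization error — measured by $\Delta x/\Delta t$ — is negligible thanks to the hypothesis $\Delta x<\tfrac{1}{16}\sqrt{2\gamma\ir}\,\Delta t$. The crude constants $\tfrac12$ and $2$ in \eqref{est:c1} leave ample slack to absorb this error.

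First I would reduce the right-hand side of the defining equation. Since $\Delta t<\nr^{-1}$, we have $\bar r\Delta t\le \nr\Delta t<1$, so $\log(1+\bar r\Delta t)$ can be controlled from both sides by elementary inequalities: $\log(1+x)\le x$ for $x\ge0$, and $\log(1+x)\ge\gamma x$ for $x\in[0,1]$ (the latter because $x\mapsto\log(1+x)/x$ is decreasing and equals $\gamma=\log 2$ at $x=1$). Together with $\ir\le\bar r\le\nr$ this gives
\[
\gamma\ir\,\Delta t \;\le\; I(\bar c)=\log(1+\bar r\Delta t)\;\le\; \nr\,\Delta t .
\]

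Next I would invoke the comparison between $I$ and $I_0$. The law $\mu$ is that of a centered Gaussian $Z$ of variance $\Delta t$ rounded to the nearest multiple of $\Delta x$, so for a suitable coupling $|X_1-Z|\le \Delta x/2$; hence $|\Lambda(\lambda)-\Lambda_0(\lambda)|\le |\lambda|\Delta x/2$ for all $\lambda$. Passing to Legendre transforms — which is the content of the regularity estimates for $I$ recalled in Appendix~\ref{secconvconj} (Lemmas~\ref{lem51}--\ref{lem:6}), and where one checks that the optimizing $\lambda$ stays positive for $y>0$ — yields, for $y>\Delta x/2$,
\[
I_0\!\left(y-\tfrac{\Delta x}{2}\right)\;\le\; I(y)\;\le\; I_0\!\left(y+\tfrac{\Delta x}{2}\right),\qquad I_0(z)=\tfrac{z^2}{2\Delta t}.
\]
Combining this with the two-sided bound on $I(\bar c)$ finishes the proof. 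For the upper bound, the left inequality and $I(\bar c)\le\nr\Delta t$ give $(\bar c-\Delta x/2)^2\le 2\nr\Delta t^2$, so $\bar c\le\sqrt{2\nr}\,\Delta t+\Delta x/2$; since $\Delta x/2<\tfrac{1}{32}\sqrt{2\gamma\ir}\,\Delta t\le\tfrac{1}{32}\sqrt{2\nr}\,\Delta t$ (using $\gamma<1$ and $\ir\le\nr$), this is at most $\tfrac{33}{32}\sqrt{2\nr}\,\Delta t<2\sqrt{2\nr}\,\Delta t$. For the lower bound, the right inequality and $I(\bar c)\ge\gamma\ir\Delta t$ give $(\bar c+\Delta x/2)^2\ge 2\gamma\ir\Delta t^2$, hence $\bar c\ge\sqrt{2\gamma\ir}\,\Delta t-\Delta x/2\ge\tfrac{31}{32}\sqrt{2\gamma\ir}\,\Delta t>\tfrac12\sqrt{2\gamma\ir}\,\Delta t$; this also confirms a posteriori that $\bar c>\Delta x/2$, so that the sandwich applies.

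The only genuinely non-routine ingredient is the passage from the moment-generating-function bound $|\Lambda-\Lambda_0|\le|\lambda|\Delta x/2$ to the sandwich on the rate functions, keeping track of sign conventions and domains so that the optimizer remains on the correct side. Everything else is elementary, and the hypothesis $\Delta x<\tfrac1{16}\sqrt{2\gamma\ir}\,\Delta t$ is precisely what makes the additive $\Delta x/2$ corrections small relative to the leading scale $\sqrt{2r}\,\Delta t$.
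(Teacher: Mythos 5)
Your proof is correct and takes essentially the same route as the paper: both arguments rest on the elementary bounds $\gamma x \le \log(1+x)\le x$ for $x\in[0,1]$ and on the sandwich $I_0\left(y-\tfrac{\Delta x}{2}\right)\le I(y)\le I_0\left(y+\tfrac{\Delta x}{2}\right)$ of Lemma~\ref{lem51}, with the hypothesis on $\Delta x$ absorbing the $\Delta x/2$ error. The only differences are organizational: the paper routes the sandwich step through Lemma~\ref{minc} (obtaining $\tfrac12 c_0<\bar c<2c_0$ with $c_0=\sqrt{2\Delta t\log(1+\bar r\Delta t)}$ and then converting), while you plug $I_0$ in directly with sharper intermediate constants, and your a-posteriori check that $\bar c>\Delta x/2$ (lower bound via the right-hand inequality first, which needs no such constraint, then the left-hand inequality) is exactly the point that Lemma~\ref{minc}'s proof treats implicitly.
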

\begin{proof}
	By concavity of the logarithm function,
	\begin{equation}
		(1-\br\Delta t) \log(1)+\br\Delta t \log(2)=\gamma\br\Delta t\leq \log(1+\br\Delta t). \label{log:concav}
	\end{equation}
	This implies that $\Delta x<\frac{1}{16}\sqrt{2\Delta t\log(1+\ir\Delta t)}$ and, according to Lemma \ref{minc} and Equation (\ref{io}), that
	%\Delta x <\frac{1}{16}\sqrt{2\gamma\ir}\Delta t,$ $\Delta x$ satisfies the assumption of Lemma \ref{minc} and thus,
	\begin{equation}
		\frac{1}{2}\sqrt{2\Delta t\log(1+\br\Delta t)}<\bc< 2
		\sqrt{2\Delta t\log(1+\br\Delta t)}.\label{c:19}
	\end{equation}
	Finally, combining (\ref{log:concav}) and (\ref{c:19}), we get that
	\begin{equation}
		\frac{1}{2}\sqrt{2\gamma\rinf}\Delta t\leq \frac{1}{2}\sqrt{2\gamma\br}\Delta t \leq \bc\leqslant2\sqrt{2\br}\Delta t\leqslant2\sqrt{2\nr}\Delta t.
		\label{c:20}
	\end{equation}
\end{proof}

\begin{lemma} \label{est:c21}
	Under the same assumptions as in Lemma~\ref{lem:estc1}, \begin{equation*}
		\left|\bc-\sqrt{2\Delta t\log(1+\br\Delta t)}\right|\leq a\Delta x,
	\end{equation*}
	with $a=16 \gamma ^{-\frac{1}{2}}\left(\frac{\br}{\ir}\right)^{\frac{1}{2}}\leq 16 \gamma ^{-\frac{1}{2}}\left(\frac{\nr}{\ir}\right)^{\frac{1}{2}}.$
\end{lemma}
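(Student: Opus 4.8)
The plan is to read the claim as a perturbation statement: $c_0:=\sqrt{2\Delta t\log(1+\br\Delta t)}$ is, by \eqref{io}--\eqref{def:c0}, exactly the solution of the \emph{continuous} equation $I_0(c_0)=\log(1+\br\Delta t)$, whereas $\bar c$ solves the \emph{discrete} equation $I(\bar c)=\log(1+\br\Delta t)$. The lemma therefore measures how replacing the Gaussian rate function $I_0$ by its discretized counterpart $I$ perturbs the solution. My strategy is to (i) bound the defect $|I-I_0|$ on the relevant range in terms of $\Delta x$, and then (ii) convert this into a bound on $|\bar c-c_0|$ by exploiting the explicit, strictly convex $I_0$.

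For step (i), the key observation is that a $\mu$-distributed physical displacement $X_1$ is obtained from a centered Gaussian $Z\sim\mathcal N(0,\Delta t)$ by rounding to the nearest multiple of $\Delta x$, so that $|X_1-Z|\le\Delta x/2$ almost surely under this coupling. Hence, for every $\lambda\in\mathbb R$,
\[
e^{-|\lambda|\Delta x/2}\,\mathbb E[e^{\lambda Z}]\le \mathbb E[e^{\lambda X_1}]\le e^{|\lambda|\Delta x/2}\,\mathbb E[e^{\lambda Z}],
\]
which gives $|\Lambda(\lambda)-\Lambda_0(\lambda)|\le |\lambda|\Delta x/2$. Transferring this to the Legendre transforms, by evaluating the sup defining $I(y)$ at the $I_0$-optimizer $\lambda=y/\Delta t$ (lower bound) and at its own optimizer $\lambda^*(y)>0$ for $y>0$ (upper bound), yields
\[
-\frac{y}{\Delta t}\,\frac{\Delta x}{2}\le I(y)-I_0(y)\le \lambda^*(y)\,\frac{\Delta x}{2}.
\]
Using the regularity estimates of Appendix~\ref{secconvconj} (Lemmas \ref{lem51}--\ref{lem:6}) to control $\lambda^*(y)$ by a constant multiple of $y/\Delta t$ on the range of interest, this produces a bound of the form $|I(y)-I_0(y)|\le C_1\, y\,\Delta x/\Delta t$ with an explicit $C_1$.

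For step (ii), since $I(\bar c)=\log(1+\br\Delta t)=I_0(c_0)$, I would write
\[
I_0(\bar c)-I_0(c_0)=I_0(\bar c)-I(\bar c),
\]
and apply the mean value theorem to $I_0$ (recall $I_0'(y)=y/\Delta t$): there is $\xi$ between $c_0$ and $\bar c$ with $I_0(\bar c)-I_0(c_0)=(\xi/\Delta t)(\bar c-c_0)$, whence
\[
|\bar c-c_0|=\frac{\Delta t}{\xi}\,\bigl|I(\bar c)-I_0(\bar c)\bigr|.
\]
It then remains to insert three ingredients: the defect bound of step (i) at $y=\bar c$; the lower bound $\xi\ge\tfrac12\sqrt{2\gamma\rinf}\,\Delta t$, which follows from $\bar c>\tfrac12\sqrt{2\gamma\rinf}\Delta t$ (Lemma \ref{lem:estc1}) together with $c_0\ge\Delta t\sqrt{2\gamma\br}\ge\Delta t\sqrt{2\gamma\rinf}$ (using the concavity bound \eqref{log:concav}); and the upper bound $\bar c\le 2\sqrt{2\br}\,\Delta t$ from \eqref{c:20}. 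Combining the ``separate'' bounds (lower bound in terms of $\rinf$, upper bound in terms of $\br$) rather than exploiting $\xi\asymp\bar c\asymp c_0$ is exactly what produces the factor $(\br/\rinf)^{1/2}$, and tracking the crude numerical constants lands at $|\bar c-c_0|\le 16\gamma^{-1/2}(\br/\rinf)^{1/2}\Delta x$. The inequality $a\le 16\gamma^{-1/2}(\nr/\rinf)^{1/2}$ then follows from $\br\le\nr$.

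The main obstacle is step (i): while $|\Lambda-\Lambda_0|\le|\lambda|\Delta x/2$ is immediate, turning it into a \emph{usable} one-sided bound on $I-I_0$ requires controlling the maximizer $\lambda^*(\bar c)$ in the variational definition of $I$, which is precisely what the convexity and regularity lemmas of the appendix are for. Everything else is a mean-value-theorem computation on the explicit parabola $I_0$; the only cost is that propagating the loose constants of Lemma \ref{lem:estc1} yields the non-optimal prefactor $16$, which a sharper control of $\lambda^*$ would improve but which is not needed here.
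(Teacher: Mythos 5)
Your proposal is correct and can be completed, but it is a mirror image of the paper's argument rather than a copy, and the difference is worth noting. Both proofs rest on the same two pillars: the sandwich between $I$ and $I_0$ coming from the rounding error $|X_1 - Z|\le \Delta x/2$ (this is exactly Lemma~\ref{lem51}, whose proof in the paper is the same computation as your step (i)), and the a priori localization of $\bar c$ and $\bar c_0 := \sqrt{2\Delta t\log(1+\br\Delta t)}$ in $\left[\tfrac12\sqrt{2\gamma\rinf}\Delta t,\, 2\sqrt{2\nr}\Delta t\right]$ from Lemma~\ref{lem:estc1}. The transfer step, however, is flipped: the paper evaluates the defect at the \emph{explicit} point $\bar c_0$, writing $|I(\bar c)-I(\bar c_0)| = |I_0(\bar c_0)-I(\bar c_0)|$, and then converts this into $|\bar c - \bar c_0|$ using the reverse-Lipschitz estimate for the discretized rate function $I$ (Lemma~\ref{lem:6}); you evaluate the defect at the \emph{implicit} point $\bar c$, writing $I_0(\bar c)-I_0(\bar c_0) = I_0(\bar c)-I(\bar c)$, and convert via the mean value theorem applied to the explicit parabola $I_0$, with $I_0'(\xi)=\xi/\Delta t$ and $\xi \ge \min(\bar c,\bar c_0)\ge \tfrac12\sqrt{2\gamma\rinf}\Delta t$. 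Your route bypasses Lemma~\ref{lem:6} entirely, which is a genuine (if small) economy, and tracking constants it actually yields a prefactor of about $4\gamma^{-1/2}(\br/\rinf)^{1/2}$, comfortably inside the claimed $a$.

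One caveat: in your step (i) the upper bound of the defect is phrased through the Legendre maximizer $\lambda^*(y)$, and you assert that Lemmas~\ref{lem51}--\ref{lem:6} control $\lambda^*(y)$ by a multiple of $y/\Delta t$ --- they do not; none of the appendix lemmas mention the maximizer. This is a repairable detour rather than a gap: either bound $\lambda^*(y)=I'(y)\le (I(2y)-I(y))/y$ by convexity and then invoke the sandwich, or (cleaner) avoid $\lambda^*$ altogether by absorbing the error $\lambda\Delta x/2$ into a shift of the argument,
\begin{equation*}
I(y) \;\le\; \sup_{\lambda\ge 0}\left(\lambda\left(y+\tfrac{\Delta x}{2}\right) - \Lambda_0(\lambda)\right) \;=\; I_0\!\left(y+\tfrac{\Delta x}{2}\right),
\end{equation*}
which is precisely Lemma~\ref{lem51} and gives $|I(y)-I_0(y)|\le y\Delta x/\Delta t$ on the relevant range. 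With that substitution your proof closes exactly as you outlined.
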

\begin{proof}
	According to Lemma \ref{lem:ccompact}, $\bc$ is located in a compact interval that does not depend on $\br$. Let $\bc_0=\sqrt{2\Delta t\log(1+\br\Delta t)}$ and remark that the inequality (\ref{est:c1})  also holds when $\bc$ is replaced by $\bc_0$.   Then, since $\Delta x<\frac{1}{4}\left(\frac{1}{2}\sqrt{2\gamma\ir}\Delta t\right),$  Lemma \ref{lem:6} applied with $\underline{y}=\frac{1}{2}\sqrt{2\gamma\rinf}\Delta t$ implies that
	\begin{equation}
		\frac{1}{8}\sqrt{2\gamma\rinf}|\bc-\bc_0|\leq|I(\bc)-I(\bc_0)|\label{est:c2}.
	\end{equation}
	In addition, note that $I(\bc)=I_0(\bc_0)=\log(1+\br\Delta t)$ (see Equation (\ref{io})), so that
	\begin{equation}
		|I(\bc)-I(\bc_0)|=|I_0(\bc_0)-I(\bc_0)|\leq 2\bc_0\frac{\Delta x}{\Delta t}\leq 2\sqrt{2\br}\Delta x,\label{est:c3}
	\end{equation}
	according to Lemma \ref{lem51}. Then, combining Equations (\ref{est:c2}) and (\ref{est:c3}), we get that
	\begin{equation*}
		|\bc-\bc_0|\leq a \Delta x.
	\end{equation*}
\end{proof}
Let us now consider a branching random walk of reproduction law $P_{\br}$, for some $\br\in[\underline{r},\nr]$, and migration law $\mu$. In Lemma  \ref{lem:speedmab}, we give an estimate on the speed of propagation of this BRW. %Note that this bound is not given uniformly in $\br$, but this bound can be easily obtained from Equation (\ref{speed:mab}), thanks to Lemma \ref{est:c1} (See Corollary \ref{lem:speedmab}).  
For further details about the BRW, we refer to Appendix \ref{SecBRW}.
\begin{lemma}
	\label{lem:speedmab} Suppose the same assumptions as in Lemma~\ref{lem:estc1} hold. Consider a branching random walk of reproduction law $P_{\br}$ and displacement law $\mu$, starting with a single particle at $0$, and denote by $M_n$ the position of its rightmost particle at generation $n$. Then, for all $\eta>0$ and $A\geq 0$,
	\begin{equation*}
		\mathbb{P}(\exists n\in \mathbb{N}: M_n>(1+\eta)n\bc+A)\leq h(\eta)e^{-\frac{\sqrt{2\gamma\ir}}{8}A},
	\end{equation*}
	with $h$ defined by
	\begin{equation}\label{def:h1}
		h(\eta)=\frac{e^{-\frac{\gamma\ir\Delta t}{8}\eta}}{1-e^{-\frac{\gamma\ir\Delta t}{8}\eta}},\quad \forall \eta>0.
	\end{equation}
\end{lemma}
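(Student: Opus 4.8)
The plan is to run the standard first-moment (many-to-one) method for the branching random walk, combined with an exponential Chernoff bound and a union bound over generations. Write $m:=1+\bar r\Delta t=\mathbb E[P_{\bar r}]$ for the mean number of offspring, so that the total population is a Galton--Watson process of mean $m$, and let $S_n=Y_1+\dots+Y_n$ denote a random walk with i.i.d.\ increments of law $\mu$. By the many-to-one lemma (recalled in Appendix~\ref{SecBRW}), for every $y\in\mathbb R$ one has $\mathbb E[\#\{u:|u|=n,\ \Xi_u>y\}]=m^n\,\mathbb P(S_n>y)$; since $\{M_n>y\}$ is precisely the event that this count is at least one, Markov's inequality gives $\mathbb P(M_n>y)\le m^n\,\mathbb P(S_n>y)$.

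Next I would bound $\mathbb P(S_n>y)$ by exponential tilting at the Legendre-dual slope of $\bar c$. Let $\lambda_{\bar c}:=I'(\bar c)$, the unique $\lambda>0$ solving $\Lambda'(\lambda)=\bar c$, so that $\Lambda(\lambda_{\bar c})=\lambda_{\bar c}\bar c-I(\bar c)$ by duality. Applying the Chernoff bound $\mathbb P(S_n>y)\le e^{-\lambda_{\bar c}y+n\Lambda(\lambda_{\bar c})}$ at the threshold $y=(1+\eta)\bar c\,n+A$, and using $m^n=e^{nI(\bar c)}$ (recall $I(\bar c)=\log m$) to cancel the growth factor, a direct simplification of the exponent yields, for every $n\ge0$, $\mathbb P\big(M_n>(1+\eta)\bar c\,n+A\big)\le e^{-\lambda_{\bar c}A}\,e^{-\eta\lambda_{\bar c}\bar c\,n}$.

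Summing over generations then finishes the argument. The term $n=0$ vanishes, since $M_0=0$ and $A\ge0$, so a union bound together with the geometric series gives $\mathbb P(\exists n:M_n>(1+\eta)\bar c\,n+A)\le e^{-\lambda_{\bar c}A}\,\dfrac{e^{-\eta\lambda_{\bar c}\bar c}}{1-e^{-\eta\lambda_{\bar c}\bar c}}$. It remains to insert the explicit constants. The slope estimate for $I$ used in the proof of Lemma~\ref{est:c21} (coming from Lemma~\ref{lem:6}), namely $|I(y)-I(y')|\ge\tfrac18\sqrt{2\gamma\ir}\,|y-y'|$ for $y,y'\ge\underline y:=\tfrac12\sqrt{2\gamma\ir}\Delta t$, yields $\lambda_{\bar c}=I'(\bar c)\ge\tfrac18\sqrt{2\gamma\ir}$ (valid because $\bar c>\underline y$ by Lemma~\ref{lem:estc1}); combined with the matching lower bound $\bar c\ge\tfrac12\sqrt{2\gamma\ir}\Delta t$ of Lemma~\ref{lem:estc1}, this also gives $\lambda_{\bar c}\bar c\ge\tfrac18\sqrt{2\gamma\ir}\cdot\tfrac12\sqrt{2\gamma\ir}\Delta t=\tfrac{\gamma\ir\Delta t}{8}$. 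Since $A\ge0$ and the map $\beta\mapsto(e^{\eta\beta}-1)^{-1}$ is decreasing, substituting these two bounds turns the right-hand side into $h(\eta)\,e^{-\frac{\sqrt{2\gamma\ir}}{8}A}$, which is exactly the claim.

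The first-moment computation is entirely standard; the only genuinely quantitative input is the two-sided control of $\bar c$ and of the dual slope $I'(\bar c)$. I therefore expect the main (though modest) difficulty to lie not in the probabilistic estimate but in arranging the regularity estimates for the rate function $I$ (the slope bound of Lemma~\ref{lem:6}) and the bounds on $\bar c$ (Lemma~\ref{lem:estc1}) so that they reproduce \emph{exactly} the constants $\tfrac{\sqrt{2\gamma\ir}}{8}$ and $\tfrac{\gamma\ir\Delta t}{8}$ appearing in the statement and in the definition of $h$.
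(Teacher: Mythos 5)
Your proof is correct, and its skeleton---many-to-one plus Markov's inequality to reduce to a single random walk, a Chernoff bound, then a union bound and a geometric series over generations---is exactly the paper's. Where you genuinely deviate is in how the exponential gain in $n$ and $A$ is extracted. The paper optimizes the Chernoff bound separately at each $n$, producing the rate function $I\bigl((1+\eta)\bar{c}+A/n\bigr)$, and then linearizes it via the chord-through-the-origin convexity estimate of Lemma~\ref{lem:convexity}, $I((1+\eta)y)\geqslant I(y)+\frac{y^2}{4\Delta t}\eta$; this yields an intermediate bound with constants $\frac{\bar{c}}{4\Delta t}$ and $\frac{\bar{c}^2}{4\Delta t}$, converted at the end using $\bar{c}\geqslant\frac12\sqrt{2\gamma\ir}\Delta t$ from Lemma~\ref{lem:estc1}. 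You instead freeze the tilt at the dual slope $\lambda_{\bar{c}}=I'(\bar{c})$, so that Legendre duality makes the exponent linear in $n$ and $A$ with no convexity lemma needed, and you then lower-bound $\lambda_{\bar{c}}\geqslant\frac{\sqrt{2\gamma\ir}}{8}$ via the slope estimate of Lemma~\ref{lem:6} (the same lemma the paper invokes to prove Lemma~\ref{est:c21}), again combined with $\bar{c}\geqslant\frac12\sqrt{2\gamma\ir}\Delta t$. The two routes are dual faces of the same convexity argument: the paper bounds the chord slope $I(\bar{c})/\bar{c}$, you bound the tangent slope $I'(\bar{c})$, and both trace back to $I(y)\geqslant y^2/(4\Delta t)$, which is why the constants $\frac{\sqrt{2\gamma\ir}}{8}$ and $\frac{\gamma\ir\Delta t}{8}$ come out identical. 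Your version buys a cleaner summation step (linearity in $n$ is automatic), at the price of invoking differentiability of $I$ and the identity $\Lambda(\lambda_{\bar{c}})=\lambda_{\bar{c}}\bar{c}-I(\bar{c})$; these do hold here, since $\mu$ is a discretized Gaussian so $\Lambda$ is finite, smooth and strictly convex on $\mathbb{R}$ (a subgradient of $I$ at $\bar{c}$ would also suffice if one wished to avoid derivatives). The two hypotheses you use implicitly also check out: Lemma~\ref{lem:6} applies with $\underline{y}=\frac12\sqrt{2\gamma\ir}\Delta t$ because $\Delta x<\frac1{16}\sqrt{2\gamma\ir}\Delta t<\underline{y}/2$, and $\bar{c}>\underline{y}$ strictly, so the chord bound does control $I'(\bar{c})$.
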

\begin{proof}
	Let $\eta>0$, $A\geq 0$. It will be enough to prove that
	\begin{equation}
		\mathbb{P}(\exists n\in \mathbb{N}: M_n>(1+\eta)n\bc+A)\leq g(\eta)  e^{-\frac{A\bc}{4\Delta t}}, \label{speed:mab}
	\end{equation}
	where $\bc$ refers to the unique positive solution of $I(\bc)=\log(1+\br\Delta t)$ and
	\begin{equation}
		g(\eta)=\frac{e^{-\frac{\bc^2}{4\Delta t}\eta}}{1-e^{-\frac{\bc^2}{4\Delta t}\eta}}. \label{def:g}
	\end{equation}
	Indeed, according to Lemma \ref{lem:estc1}, $\bc>\frac{1}{2}\sqrt{2\gamma \ir} \Delta t$, so that
	\begin{equation*}
		\frac{\bc^2}{4\Delta t}\geq \frac{1}{8}\gamma \ir \Delta t, \quad \textnormal{and, }\quad \frac{\bc}{4\Delta t}\geq \frac{1}{8}\sqrt{2\gamma\ir}.
	\end{equation*}
	
	We now prove \eqref{speed:mab}. For a particle $v$ living in the BRW, we denote by $\Xi_v$ its position and define  $Z_n=\sum_{|v|=n}\mathbb{1}_{\Xi_v>(1+\eta)n\bc+A}$. Markov's inequality implies that
	\begin{equation}
		\mathbb{P}(M_n>(1+\eta)n\bc+A)=\mathbb{P}(Z_n\geq 1)\leq \mathbb{E}[Z_n], \label{markovZ}
	\end{equation}
	and thanks to the many-to-one lemma (see Lemma \ref{lem:manyto1}), we know that \begin{equation}
		\mathbb{E}[Z_n]=(1+\br\Delta t)^n\mathbb{P}(\Xi_{v}>(1+\eta)n\bc+A),\label{manyto2Z}
	\end{equation}
	for any particle $v$ of the $n$-th generation. Besides, by Chernoff's bound,
	\begin{equation}
		\mathbb{P}(\Xi_{v}>(1+\eta)n\bc+A)\leq e^{n( \Lambda(\theta)-\theta ((1+\eta)\bc+A/n) )}, \quad \forall \theta \geq 0. \label{chernoff:I}
		%\mathbb{P}\left(\frac{X_v}{n}\in\right)    }\mathbb{E}\left[e^{\theta \Xi_v}\right]=e^{n( \Lambda(\theta)-\theta ((1+\eta)c+A/n) )}\leq e^{-nI((1+\eta)c+A/n))}.
\end{equation}
Remark that for $\theta<0$,
$$\theta ((1+\eta)\bc+A/n)-\Lambda((1+\eta)\bc+A/n)\leq -\Lambda((1+\eta)\bc+A/n)\leq I(0)=0.$$
Yet, $I((1+\eta)\bc+A/n)\geq 0,$ so that $I((1+\eta)\bc+A/n))=\sup_{\theta \geq 0}\theta((1+\eta)\bc+A/n)-\Lambda((1+\eta)\bc+A/n),$ and Equation (\ref{chernoff:I}) gives that
\begin{equation}
	\mathbb{P}(\Xi_{v}>(1+\eta)n\bc+A)\leq e^{-nI((1+\eta)\bc+A/n)}. \label{cramer}
\end{equation}
Moreover, thanks to Lemma \ref{lem:ccompact}, we know that $\Delta x\leq\frac{\bc}{8}$, therefore, according to Lemma \ref{lem:convexity},
\begin{equation*}
	I(\bc)+\frac{\bc^2}{4\Delta t}\left(\eta +\frac{A}{n\bc}\right)\leq I((1+\eta +A/(n\bc))\bc),\quad \forall n\in \mathbb{N}.
\end{equation*}
Thus, combining (\ref{markovZ}), (\ref{manyto2Z}) and (\ref{cramer}), we get that
\begin{equation*}
	\mathbb{E}[Z_n]\leq e^{-\frac{n\bc^2}{4\Delta t}\left(\eta+\frac{A}{n\bc}\right)},
\end{equation*}
since $I(\bc)=\log(1+\underline{r}\Delta t)$. Finally, by a union bound,
\begin{equation*}
	\mathbb{P}(\exists n\in \mathbb{N}: M_n>(1+\eta)n\bc+A)\leq \sum_{n=1}^\infty \mathbb{E}[Z_n]\leq e^{-\frac{A\bc}{4\Delta t}}\sum_{n=1}^\infty e^{-\frac{n\bc^2}{4\Delta t}\eta}=\frac{e^{-\frac{A\bc}{4\Delta t}-\frac{\bc^2}{4\Delta t}\eta}}{1-e^{-\frac{\bc^2}{4\Delta t}\eta}}.
\end{equation*}
%where we can choose $C_\eta \geq(e^{-\gamma\ir\eta/4}-1)^{-1}$ thanks to Lemma \ref{lem:ccompact}.
This proves \eqref{speed:mab} and finishes the proof of the lemma.
\end{proof}

\subsection{Invasion speed estimate: small time steps}
\label{sec:tsptvep}
In this subsection, we bound the displacement of the rightmost particle in $\pr$
after $\lfloor \eps^{-1}\rfloor$ generations, i.e.~after time $\Delta t\cdot\lfloor \eps^{-1}\rfloor$.

\begin{proposition}
\label{pr:upperbound}
Assume that Assumption~\ref{Assumption1} holds.
\jj{Suppose that $r$ is a smooth growth rate function.}
Let  $\Delta t< \nr^{-1}$ and $\Delta x<\frac 1 5\sqrt{2\gamma\ir}\Delta t$.
There exist two positive constants $\alpha$ and $\eps_0$ such that, for all $K>0$, $\eps<\eps_0$, $(k_0,i_0)\in \mathbb{N}\times\mathbb{Z}$  and $k_1=k_0+\lfloor \eps^{-1}\rfloor$,
\begin{multline}
	\mathbb{P}\left(\exists k\in\llbracket k_0,k_1\rrbracket: \eps X^*_k> \eps x_{i_0}+\sqrt{2r(\eps t_{k_0},\eps x_{i_0})}\eps\Delta t(k-k_0) +A(\Delta x+\Delta t^2)\big|X^*_{k_0}\leq x_{i_0}\right)\\\leq Ke^{-\frac{\alpha}{\eps}} ,
\end{multline}
for some constant $A>0$ that only depends on $\nr$, $\ir$ \jj{and $M$ (see \eqref{eq:ub_grad_r})}.
\end{proposition}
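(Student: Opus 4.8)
The plan is to control the rightmost particle over a window of $\lfloor\eps^{-1}\rfloor$ generations by dominating the competition process $\pr$ with a branching random walk (BRW) without competition, then applying the deviation bound of Lemma~\ref{lem:speedmab}. The key point is that over such a window, because $r$ is $\mathcal{C}^1$ and the spatial/temporal variation is slow (scaled by $\eps$), the reproduction rate stays close to the frozen value $\bar r := r(\eps t_{k_0},\eps x_{i_0})$. First I would use Assumption~\ref{Assumption1}(1): the offspring law $\nu_{r,n,K}$ is stochastically dominated by $P_r^{*n}$, and $(P_r)$ is increasing in $r$. This lets me replace the true reproduction law at each site by $P_{\bar r_+}$, where $\bar r_+$ is a slightly inflated rate bounding $r$ over the relevant space-time box. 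Via the coupling Lemma~\ref{lem:coupl} (applied with the event $X^*_{k_0}\le x_{i_0}$ and a single initial particle placed at $x_{i_0}$, since the process without competition only grows), I would dominate $\pr$ on the window $\llbracket k_0,k_1\rrbracket$ by a BRW with reproduction law $P_{\bar r_+}$ started from one particle at $x_{i_0}$.

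The second step is to quantify how much inflating $r$ to $\bar r_+$ costs in terms of speed. Over $k-k_0\le\lfloor\eps^{-1}\rfloor$ generations, the rightmost particle travels at most $\sqrt{2r}\,$-type distances, so the relevant spatial box has width $O(\eps^{-1}\Delta t\cdot\sqrt{\bar r})=O(\Delta t)$ in the $\eps x$ variable, on which $r$ varies by $O(\eps\cdot\Delta t)$ by the Lipschitz bound \eqref{rem:lip}; similarly in time. Thus I can take $\bar r_+ = \bar r + O(\eps)$, or rather bound the drift in terms of $\sqrt{2r(\eps t_{k_0},\eps x_{i_0})}$ plus a correction absorbed into the $A(\Delta x+\Delta t^2)$ term. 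Let $\bar c_+$ denote the solution of $I(\bar c_+)=\log(1+\bar r_+\Delta t)$. By Lemma~\ref{est:c21}, $\bar c_+$ is within $a\Delta x$ of $\sqrt{2\Delta t\log(1+\bar r_+\Delta t)}$, and a Taylor expansion of $\log(1+\bar r_+\Delta t)$ gives $\sqrt{2\Delta t\log(1+\bar r_+\Delta t)} = \sqrt{2\bar r_+}\,\Delta t + O(\Delta t^2)$. Combining these, the per-generation speed $\bar c_+$ equals $\sqrt{2\bar r}\,\Delta t$ up to an error of order $\Delta x + \Delta t^2$ (plus the $O(\eps)$ from $\bar r_+-\bar r$, which is negligible for small $\eps$).

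I would then apply Lemma~\ref{lem:speedmab} to the dominating BRW. Writing $M_n$ for its rightmost particle at generation $n=k-k_0$, the event that $\eps X^*_k$ exceeds $\eps x_{i_0}+\sqrt{2\bar r}\,\eps\Delta t\,(k-k_0)+A(\Delta x+\Delta t^2)$ translates (after dividing by $\eps$ and using $n\bar c_+ \le n\sqrt{2\bar r}\Delta t + n\cdot O(\Delta x+\Delta t^2)$ with $n\le\lfloor\eps^{-1}\rfloor$) into an event of the form $\{\exists n: M_n > (1+\eta)n\bar c_+ + A'/\eps\}$ for a suitable fixed $\eta>0$ and $A'$ proportional to a constant. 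Lemma~\ref{lem:speedmab} bounds this by $h(\eta)e^{-\frac{\sqrt{2\gamma\ir}}{8}\cdot A'/\eps}$, which is of the form $\mathrm{const}\cdot e^{-\alpha/\eps}$. Since the coupling started from a single particle may undercount, I would instead start the dominating BRW from the (at most $K$) particles present at site $x_{i_0}$ — or, more cleanly, dominate $\pr$ everywhere left of $x_{i_0}$ by $K$ independent such BRWs, yielding the factor $K$ via a union bound and producing the stated $Ke^{-\alpha/\eps}$.

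The main obstacle I expect is the bookkeeping that turns the frozen-rate BRW comparison into a clean bound with drift exactly $\sqrt{2r(\eps t_{k_0},\eps x_{i_0})}$: one must simultaneously (i) choose the inflation $\bar r_+$ large enough that it genuinely upper-bounds $r$ over the random spatial range the BRW can reach in $\lfloor\eps^{-1}\rfloor$ steps — which requires an a priori (deterministic, or high-probability) control on that range — and (ii) keep the resulting speed correction within the allotted $A(\Delta x+\Delta t^2)$ budget uniformly in the starting point $(k_0,i_0)$. The circularity in (i) — needing a speed bound to know the range, in order to prove the speed bound — is best broken by first establishing a crude linear-in-$n$ bound on the range (e.g.\ $2\sqrt{2\nr}\,\Delta t$ per step from Lemma~\ref{lem:estc1}), using the \emph{global} bound $\nr$, and only then refining to the local rate $\bar r$; the crude bound fixes the box on which the Lipschitz estimate \eqref{rem:lip} is applied.
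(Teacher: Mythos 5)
Your per-particle analysis is essentially the paper's Step 2a: freeze the rate on a space--time box of half-width $2R/\eps$ around $(t_{k_0},x_{i_0})$ with $R=2\sqrt{2\nr}\Delta t$, break the range/speed circularity by combining the crude global rate $\nr$ with a stopping-time coupling (the paper's processes $\pr^2,\pr^3,\pr^4$ and exit time $\tau$, giving the terms $T_1,T_2$), and convert the BRW speed $\bar c$ into $\sqrt{2r(\eps t_{k_0},\eps x_{i_0})}\,\Delta t+O(\Delta x+\Delta t^2)$ via Lemma~\ref{est:c21} and Lemma~\ref{lem:speedmab}. (One small correction to your bookkeeping: over that box the variation of $r$ is $O(\Delta t)$, not $O(\eps\Delta t)$ --- see \eqref{approx:r} --- and this is precisely the source of the $\Delta t^2$ term in the statement; your final accounting absorbs it correctly anyway.)

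The genuine gap is your treatment of the initial configuration, i.e.\ exactly where the factor $K$ comes from. Conditioning on $\{X^*_{k_0}\le x_{i_0}\}$ gives no control on the number of particles at generation $k_0$ or on how many sites they occupy: under Assumption~\ref{Assumption1} there is no deterministic cap of $K$ particles per site (the second part of Assumption~\ref{Assumption1} is only a stochastic domination of the reproduction output by $\bar{\nu}^{*K}$; migration can pile particles up, and the configuration at generation $k_0$ is arbitrary). So ``the (at most $K$) particles present at site $x_{i_0}$'' is a false premise, and ``dominate $\pr$ everywhere left of $x_{i_0}$ by $K$ independent BRWs'' cannot be justified by Lemma~\ref{lem:coupl}: a BRW started from one (or $K$) particles at $x_{i_0}$ does not dominate a configuration with unboundedly many particles spread over unboundedly many sites, and a naive union bound over initial particles produces an unbounded factor, not $K$. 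The paper closes this with its Step 1, for which your proposal has no substitute: run the dynamics for a single generation; by the second part of Assumption~\ref{Assumption1} the per-site offspring count is dominated by $\bar{\nu}^{*K}$ \emph{uniformly in the incoming occupancy}, so at generation $k_0+1$ the \emph{expected} occupancy of every site is at most $mK$, and a Gaussian tail estimate controls the one-step jump of the maximum. One then takes a union bound over \emph{sites} (not particles), weighted by $mK$; summability over the infinitely many sites to the left of $x_{i_0}$ is exactly why the paper's Step 2b (sites at distance at least $R/\eps$, with per-site probability decaying exponentially in that distance via Lemma~\ref{lem:speedmab} applied with $A=x_{i_0}-x_i-\tfrac{3R}{4\eps}$) is needed in addition to your near-the-front analysis. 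Without this one-step regularization and the far-field summation, the stated bound $Ke^{-\alpha/\eps}$, uniform over all admissible initial conditions, cannot be obtained.
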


\begin{rem}
\jj{Note that Proposition~\ref{pr:upperbound} requires stronger regularity assumptions
	on $r$ than
	Proposition~\ref{thupperbound}.
	We will show in Section~\ref{sec:small_steps_ub} that Theorem~\ref{thupperbound} can be derived from
	Proposition~\ref{pr:upperbound} using an approximation argument.
}
\end{rem}
\begin{rem}
\label{rem:simplification_ub}
\jj{
	It is clear from Assumption~\ref{Assumption1} and Lemma~\ref{lem:coupl} that it suffices
	to prove the result for the particle system with reproduction
	law $\tilde{\nu}_{r,n,K}$.}
	\end{rem}
	\begin{proof}
Let $(k_0,i_0)\in\mathbb{Z}\times \mathbb{N}$. Throughout the proof, we will
assume that we start the process at generation $k_0$ with a deterministic
initial condition $n_{k_0}$ such that $X^*_{k_0} \le x_{i_0}$. The estimates
we obtain will not depend on this initial condition. Rewriting the
statement slightly, it will therefore be enough to show the following:
\begin{equation}
	\label{eq:proof_upperbound_toshow}
	\mathbb{P}\left(\exists k\in\llbracket k_0+1,k_1\rrbracket: X^*_k> a_k\right)\leq Ke^{-\frac{\alpha}{\eps}} ,
\end{equation}
where $a_k = x_{i_0}+\sqrt{2r(\eps t_{k_0},\eps x_{i_0})}\Delta t(k-k_0)
+A(\Delta x+\Delta t^2)/\eps$ and $\alpha,\eps_0,A$, to be defined later,
are as in the statement of the proposition.

The proof is divided into two steps. In the first step, we let the process
run for one time step, after which the expected local density of the process
can be bounded by a constant multiple of $K$, thanks to Assumption~\ref{Assumption1}.
In the second step, we control the displacement of the rightmost particle in $\pr$
between generations $k_0+1$ and $k_1$, thanks to several couplings with
processes without competition and distinguishing the particles according
to the position of their ancestor at generation $k_0+1$.

\paragraph{Step 1: Control of the population at generation $k_0+1$.}
In this step, we control the number of particles on each site in the process $\textbf{X}$ after one generation. Recall that $n_k$ denotes the configuration of the process $\pr$ at generation $k$. We denote by $N_i$ the number of individuals born on the site $x_i$ during the first reproduction phase. In addition, for $\ell\in\llbracket 1,N_i\rrbracket,$ we denote by $U^\ell_i$ the displacement of the $\ell$-th particle born on the site $x_i$ during the first reproduction phase. Therefore, we have for every $i\in\mathbb Z$
\begin{equation}
	n_{k_0+1}(i)=\sum_{j\leq i_0}\sum_{\ell=1}^{N_j}\mathbb{1}_{U^\ell_j=i-j}. \label{def:Zi}
\end{equation}
Recall that $(U^\ell_i)$ is a sequence of i.i.d. random variables of law $\mu$ and that the $(N_i)$ are stochastically dominated by the sum of $K$ i.i.d. random variables of law $\bar{\nu}$ of finite expectation $m$, by Assumption~\ref{Assumption1}. Thus, we have that
\begin{equation}
	\mathbb{E}[n_{k_0+1}(i)]\leq mK \sum_{j\leq i_0}\mathbb{P}(U=i-j),\label{EZ:1}
\end{equation} where $U$ is a random variable of law $\mu$. In particular, we get
\begin{equation}
	\mathbb{E}[n_{k_0+1}(i)]\leq mK\label{EZ:2}.
\end{equation}

We will also need a bound on the position of the maximal particle at generation $k_0+1$. Let $k\in \mathbb{N}_0$ and $i\in\mathbb Z$ such that $x_i-x_{i_0}\geq \frac{1}{2\sqrt{\eps}}+(k+1)\Delta x$. Then, we have that
\begin{eqnarray*}
	\sum_{j\leq i_0}\mathbb{P}(U=i-j)&=& \int_{(i-i_0-\frac{1}{2})\Delta x}^\infty\frac{1}{\sqrt{2\pi \Delta t}}e^{-\frac{x^2}{2\Delta t}}dx\leq \int_{\frac{1}{2\sqrt{\eps}}+k\Delta x}^{\infty}\frac{1}{\sqrt{2\pi \Delta t}}e^{-\frac{x^2}{2\Delta t}}dx\\
	&\leq& e^{-\frac{1}{2\Delta t} \left(\frac{1}{2\sqrt{\eps}}+k\Delta x\right)^2}\leq e^{-\frac
		{1}{8\eps  \Delta t}}e^{-\frac{k\Delta x}{2\sqrt{\eps}\Delta t}},
\end{eqnarray*}
where we have used the Gaussian tail estimate $\mathbb P(Z\ge x) \le e^{-x^2/2}$ for a standard Gaussian r.v.~$Z$. Using Equation \eqref{EZ:1}, we have by Markov's inequality and a union bound,
\begin{align*}
	\mathbb{P}\left(X^*_{k_0+1} >  x_{i_0}+\frac{1}{2\sqrt{\eps}} + \Delta x\right)\leq mK e^{-\frac
		{1}{8\eps  \Delta t}}\sum_{k=0}^\infty e^{-\frac{k\Delta x}{2\sqrt{\eps}\Delta t}},
\end{align*}
and therefore, as long as $\eps$ is small enough,
\begin{equation}
	\label{EZ:1.2}
	\mathbb{P}\left(X^*_{k_0+1} > x_{i_0}+\frac{1}{\sqrt \eps} \right) \leq 2mK e^{-\frac
		{1}{8\eps  \Delta t}}.
\end{equation}

\paragraph{Step 2: Between generations $k_0+1$ and $k_1$.}

As mentioned above, between generations $k_0+1$ and $k_1$, we control the process $\pr$ by another process without competition between particles. More precisely, we denote by $\pr^1$ the process defined as $\pr$, but where for every $k\in\llbracket k_0+1,k_1\rrbracket$ the reproduction law on site $x_i$ at time $t_k$ is given by the probability distribution $P_{r(\eps t_k, \eps x_i)}^{*n_k(i)}$ instead of $\nu_{r(\eps t_k, \eps x_i),n,K}$ (the migration law is still $\mu$). The position of its maximum at generation $k$ is analogously denoted by $X^{1*}_k$. By the first part of Assumption~\ref{Assumption1} and Lemma~\ref{lem:coupl}, we can couple $\pr$ and $\pr^1$ such that $\pr^1$ dominates $\pr$. Hence, in what follows, it will be enough to prove \eqref{eq:proof_upperbound_toshow} with $\pr^1$ instead of $\pr$. The advantage of working with $\pr^1$ instead of $\pr$ is the fact that $\pr^1$ satisfies the branching property, i.e.~the descendants of different individuals from the same generation evolve independently.

We first make use of the estimates from Step 1. Conditioning on the process at generation $k_0+1$, and using a union bound over the particles from that generation, with the notation $\mathbb{P}_{(\delta_i,k_0+1)}$ to mean that the process starts with one particle at site $x_i$ at generation $k_0+1$, we get for sufficiently small $\eps$,
\begin{align}
	& \mathbb{P}\left(\exists k\in \llbracket k_0+1,k_1\rrbracket :X_k^{1*}> a_k\right)\nonumber                                                                                                          \\
	& \leq \sum_{i\in\mathbb Z: x_i\le x_{i_0}+\frac{1}{\sqrt \eps}}\mathbb{E}[n_{k_0+1}(i)]\mathbb{P}_{(\delta_i,k_0+1)}\left(\exists k\in \llbracket k_0+1,k_1\rrbracket :X_k^{1*}> a_k\right)\nonumber \\
	& \qquad \qquad \qquad + \mathbb{P}\left(X_{k_0+1}^* > x_{i_0}+\frac{1}{\sqrt \eps}\right)\nonumber                                                                                                   \\
	& \leq mK\sum_{i\in\mathbb Z: x_i\le x_{i_0}+\frac{1}{\sqrt \eps}}\mathbb{P}_{(\delta_i,k_0+1)}\left(\exists k\in \llbracket k_0+1,k_1\rrbracket :X_k^{1*}> a_k\right)+2mKe^{-\frac
		{1}{8\eps  \Delta t}}.\label{eq:time_after_time}
\end{align}
Here, we used \eqref{EZ:2} and \eqref{EZ:1.2} from Step 1 in the last line.

In what follows, we bound the probability appearing on the RHS of \eqref{eq:time_after_time} for various values of $x_i$. The bound will depend on whether $x_i\ge x_{i_0}-R/\eps$ or not, where
% We first delineate an area in which the value of the function $r$ controls the propagation speed of the process: 
\begin{equation}
	R\coloneqq 2\sqrt{2\nr}\Delta t. \label{def:R}
\end{equation}
We need a few more definitions. Define
\begin{equation}
	\br=\max\left\{r(\eps t,\eps x);  \;t_{k_0}\leq t\leq t_{k_1}, \; |x- x_{i_0}|\leq \frac{2R}{\eps}\right\}.\label{def:rb}
\end{equation}
and $\bc$ the unique positive solution of
\begin{equation}
	I(\bc)=\log(1+\br\Delta t).\label{def:barc}
\end{equation}
\jj{We see from \eqref{eq:approx_r} that
	\begin{equation}
		|\sqrt{2\br}-\sqrt{2r(\eps t_{k_0},\eps x_{i_0})}|\leq
		L(\eps(t_{k_1}-t_{k_0})+2R)\leq L (1+4\sqrt{2\nr})\Delta t.
		%\label{eq:approx_r}
		\label{eq:approx_r_bis}
\end{equation}}
Denote by $\tilde{r}$ the function
\begin{equation}\label{def:rtilde}
	\tilde{r}(t,x)= \begin{cases}
		\br & \text{if}\; |x-x_{i_0}|\le \frac{2R}{\eps} \\
		\nr & \text{if} \;|x-x_{i_0}| > \frac{2R}{\eps}.
	\end{cases}
\end{equation}
Now introduce three more processes $\pr^2$, $\pr^3$ and $\pr^4$. These processes are defined as $\pr^1$, except that their reproduction law on site $x_i$ at time $t_k$, $k\in\llbracket k_0+1,k_1\rrbracket$, is given by $P_{\tilde{r}(\eps t_k, \eps x_i)}^{*n_k(i)}$, $P_{\br}^{*n_k(i)}$ and $P_{\nr}^{*n_k(i)}$, respectively. In other words, $\pr^3$ and $\pr^4$ are BRW with reproduction laws $P_{\br}$ and $P_{\nr}$, respectively.

From the definition of $\tilde{r}$, we immediately get that $\tilde{r} \ge r$. Therefore, according to Lemma \ref{lem:coupl} and Assumption \ref{Assumption1}, there exists a coupling between $\textbf{X}^1$ and $\textbf{X}^2$ so that $\textbf{X}^2$ dominates $\textbf{X}^1.$ Similarly, there exists a coupling between $\textbf{X}^2$ and $\textbf{X}^4$ so that $\textbf{X}^4$ dominates $\textbf{X}^2.$ In order to construct a coupling between $\pr^2$ and $\pr^3$, define the stopping time $\tau$ as the first time at which a particle from the process $\pr^2$ exits the interval $\left[x_{i_0}-\frac{2R}{\eps},x_{i_0}+\frac{2R}{\eps}\right]$ before time $t_{k_1}$. By the definition of $\tilde{r}$, there exists then a coupling between $\pr^2$ and $\pr^3$, such that $\pr^3$ dominates $\pr^2$ until the time $\tau$.

Let $i\in\mathbb Z$. As a consequence of the previous couplings, we have the following two bounds for the probability appearing on the RHS of \eqref{eq:time_after_time}. First,
\begin{align}
	\mathbb{P}_{(\delta_i,k_0+1)}\left(\exists k\in \llbracket k_0+1,k_1\rrbracket :X_k^{1*}> a_k\right) \le \mathbb{P}_{(\delta_i,k_0+1)}\left(\exists k\in \llbracket k_0+1,k_1\rrbracket :X_k^{4*}> a_k\right).
	\label{eq:salade_de_fruits1}
\end{align}
This bound will be used for $x_i \le x_{i_0}-R/\eps$.
Second, denoting by $\bar X_k^{3*}$ the position of the \emph{minimal} particle in the process $\pr^3$ at generation $k$, we have
\begin{align}
	\mathbb{P}_{(\delta_i,k_0+1)} & \left(\exists k\in \llbracket k_0+1,k_1\rrbracket :X_k^{1*}> a_k\right) \le \mathbb{P}_{(\delta_i,k_0+1)}\left(\exists k\in \llbracket k_0+1,k_1\rrbracket :X_k^{3*}> a_k\right)\nonumber \\
	& + \mathbb{P}_{(\delta_i,k_0+1)}\left(\exists k\in \llbracket k_0+1,k_1\rrbracket :X_k^{3*}> x_{i_0} + 2R/\eps\text{ or }\bar X_k^{3*}< x_{i_0} - 2R/\eps\right)\nonumber                  \\
	& \eqqcolon T_1+T_2.
	\label{eq:salade_de_fruits2}
\end{align}
This bound will be used for $i$ such that $x_i \in (x_{i_0}-R/\eps,x_{i_0}+1/\sqrt\eps]$.

\paragraph{Step 2a: particles close to the maximum.} Let $i\in\mathbb Z$ such that $x_i \in (x_{i_0}-R/\eps,x_{i_0}+1/\sqrt\eps]$. We  bound the RHS of \eqref{eq:salade_de_fruits2}. Assume $\eps$ is small enough so that $1/\sqrt\eps\le R/\eps$. Using first the assumption on $x_i$ and then the symmetry and translational invariance of $\pr^3$, we have
\begin{align*}
	T_2 & \le \mathbb{P}_{(\delta_i,k_0+1)}\left(\exists k\in \llbracket k_0+1,k_1\rrbracket :X_k^{3*}> x_i + R/\eps\text{ or }\added[id=j]{\bar X_k^{3*}}< x_i - R/\eps\right) \\
	& \le  2\mathbb{P}_{(\delta_0,0)}\left(\exists k\in \llbracket 0,k_1-(k_0+1)\rrbracket :X_k^{3*}> R/\eps\right).
\end{align*}
We can now apply Lemma \ref{lem:speedmab} with $A=\frac{R}{4\eps}$ and $\eta=\frac{1}{4}$. Indeed, since $\Delta x<\frac 1 5\sqrt{2\gamma \ir}\Delta t$, Lemma \ref{est:c21} applied to $\bc$ (see Equation \eqref{def:barc}) gives that
$$\bc\leq \sqrt{2\Delta t\log(1+\br\Delta t)}+a\Delta x\leq \sqrt{2\br}\Delta t +\frac{1}{5}\sqrt{2\br}\Delta t\leq \frac{6}{5}\sqrt{2\br}\Delta t \le \frac35 R$$ so that, for $k\leq k_1$
\begin{eqnarray*}
	(1+\eta)(k-k_0-1)\bc+A&\leq& \frac5{4\eps}\bc+A \leq \frac{3R}{4\eps} + \frac R{4\eps}= \frac{R}{\eps}.
\end{eqnarray*}
Lemma \ref{lem:speedmab} now gives that
\begin{equation}
	T_2 \leq 2h(1/4)e^{-\frac{\sqrt{2\gamma\ir}}{32}\frac{R}{\eps}}, \label{eq:420}
\end{equation}
with $h$ as in the statement of Lemma \ref{lem:speedmab}.

We now bound the term $T_1$ on the RHS of \eqref{eq:salade_de_fruits2}. Let $i\in\mathbb Z$ such that $x_i \in (x_{i_0}-R/\eps,x_{i_0}+1/\sqrt\eps]$. We then have by the definitions of $(a_k)$ and $\br$
\begin{equation}
	\label{eq:rayman}
	T_1 \le  \mathbb{P}_{(\delta_i,k_0+1)}\left(\exists k\in \llbracket k_0+1,k_1\rrbracket :X_k^{3*}> x_{i_0} + \sqrt{2\br}\Delta t(k-k_0)+\frac A\eps(\Delta x +\Delta t^2)\right).
\end{equation}
Now, according to Lemma \ref{lem:speedmab}, we have, for some $C>0$ not depending on $\eps$,
\begin{equation}
	\mathbb{P}\left(\exists k\in \llbracket k_0+1,k_1\rrbracket :X^{3*}_k>x_{i}+(1+\Delta t)(k-k_0-1)\bc+\frac{\Delta t^2}{\eps}\right)\leq Ce^{-\frac{\sqrt{2\gamma \ir}}{8}\frac{\Delta t^2}{\eps}}.
	\label{eq:421}
\end{equation}
Besides, recall from Lemma \ref{est:c21} that
\begin{equation}
	\bc\leq \sqrt{2\br}\Delta t+a\Delta x,\label{eq:422}
\end{equation}
with some $a\leq 16 \gamma ^{-\frac{1}{2}}\left(\frac{\nr}{\ir}\right)^{\frac{1}{2}}.$
Therefore, combining \eqref{eq:421} and \eqref{eq:422} and using that $x_i\le x_{i_0}+\frac{1}{\sqrt \eps}$ and $\Delta t \le \nr^{-1}$ and $k-k_0\leq \frac{1}{\eps}$ for $k\leq k_1$, we have
\begin{align}
	& \mathbb{P}\left(\exists k\in \llbracket k_0+1,k_1\rrbracket :\right.                                                                                               \\
	& \left.\; \; X^{3*}_k>x_{i_0}+\sqrt{2\br}\Delta t(k-k_0)+ \frac{\sqrt \eps+\sqrt{2\nr}\eps + (1+\sqrt{2\nr})\Delta t^2+a(1+\nr^{-1})\Delta x}{\eps}\right)\nonumber \\
	& \;\leq Ce^{-\frac{\sqrt{2\gamma \ir}}{8}\frac{\Delta t^2}{\eps}}.
	\label{eq:423bis}
\end{align}
Combining \eqref{eq:rayman}, \eqref{eq:423bis} and \eqref{eq:approx_r_bis}, it
follows that for  $$A = 1+\max(1 +\sqrt{2\nr}+ L(1+4\sqrt{2\nr}),a(1+\nr^{-1})),$$ we get for $\eps$ small enough,
\begin{equation}
	T_1 \le Ce^{-\frac{\sqrt{2\gamma \ir}}{8}\frac{\Delta t^2}{\eps}}.
	\label{eq:423}
\end{equation}
Combining \eqref{eq:salade_de_fruits2}, \eqref{eq:420} and \eqref{eq:423}, we now get, using again that $1/\sqrt\eps \le R/\eps$,
\begin{multline}
	\sum_{i\in\mathbb Z: x_i\in(x_{i_0}-R/\eps, x_{i_0}+\frac{1}{\sqrt \eps}]}\mathbb{P}_{(\delta_i,k_0+1)}\left(\exists k\in \llbracket k_0+1,k_1\rrbracket :X_k^{1*}> a_k\right) \\
	\le \frac{2R}{\Delta x \eps} \left(2h(1/4)e^{-\frac{\sqrt{2\gamma\ir}}{32}\frac{R}{\eps}}+ Ce^{-\frac{\sqrt{2\gamma \ir}}{8}\frac{\Delta t^2}{\eps}}\right) \le e^{-\alpha_1/\eps},
	\label{eq:step2a}
\end{multline}
for some $\alpha_1>0$ and for $\eps$ sufficiently small, and with $A$ as above.

\paragraph{Step 2b: particles far away from the maximum.} Let $i\in\mathbb Z$ such that $x_i \le x_{i_0}-R/\eps$. We bound the RHS of \eqref{eq:salade_de_fruits1}. We have for every $A\ge0$, using that $a_k \ge x_{i_0}$ for every $k\in\llbracket k_0+1,k_1\rrbracket$,
\begin{align}
	\mathbb{P}_{(\delta_i,k_0+1)}\left(\exists k\in \llbracket k_0+1,k_1\rrbracket :X_k^{4*}> a_k\right)
	& \le \mathbb{P}_{(\delta_i,k_0+1)}\left(\exists k\in \llbracket k_0+1,k_1\rrbracket :X_k^{4*}> x_{i_0}\right)\nonumber \\
	& \le \mathbb{P}_{(\delta_0,0)}\left(\exists k\le \lfloor \eps^{-1}\rfloor :X_k^{4*}> x_{i_0}-x_i\right).
\end{align}
Denote by $\nc$ the unique positive solution of $I(\nc)=\log(1+\nr \Delta t)$.
Following the same calculations as in Step 2a, we have $\nc \le \frac{3}{5}R$. We then use Lemma~\ref{lem:speedmab} with $\eta = 1/4$ and $A = x_{i_0}-x_i - \frac{3R}{4\eps} $ to see that
\begin{align}
	& \mathbb{P}_{(\delta_0,0)}\left(\exists k\le \lfloor \eps^{-1}\rfloor :X_k^{4*}> x_{i_0}-x_i\right)                                                             \\
	& \le \mathbb{P}_{(\delta_0,0)}\left(\exists k\le \lfloor \eps^{-1}\rfloor :X_k^{4*}> \frac{(1+\eta)\nc}{\eps} + x_{i_0}-x_i - \frac{3R}{4\eps} \right)\nonumber \\
	& \le h(1/4)\exp\left(-\frac{\sqrt{2\gamma \ir}}8 \left(x_{i_0}-x_i - \frac{3R}{4\eps}\right)\right).
	\label{eq:doucement}
\end{align}
Combining \eqref{eq:salade_de_fruits1} and \eqref{eq:doucement}, we now get
\begin{multline}
	\sum_{i\in\mathbb Z: x_i\le x_{i_0}-R/\eps}\mathbb{P}_{(\delta_i,k_0+1)}\left(\exists k\in \llbracket k_0+1,k_1\rrbracket :X_k^{1*}> a_k\right) \\
	\le \sum_{j\ge 0}h(1/4) \exp\left(-\frac{\sqrt{2\gamma \ir}}8 \left(\frac{R}{4\eps} + (\Delta x)j\right)\right) \le \exp(-\alpha_2/\eps),
	\label{eq:step2b}
\end{multline}
for some $\alpha_2>0$ and for $\eps$ sufficiently small.

Combining \eqref{eq:time_after_time}, \eqref{eq:step2a} and \eqref{eq:step2b},
and using the fact that $\pr^1$ dominates $\pr$, we obtain
\eqref{eq:proof_upperbound_toshow} for some $\alpha>0$ and for $\eps$
sufficiently small, with $A$ as above, depending only on $\nr$, $\ir$
\jj{and $M$} (see \eqref{eq:ub_grad_r}). This concludes the proof of the proposition.
\end{proof}

\subsection{Comparison with the solution of (\ref{cauchypb}): proof of Proposition \ref{thupperbound}}
\label{sec:small_steps_ub}

\jj{\textit{Step 1.} Let us first assume that the function $r$ is as a smooth
growth rate function. }
Let  $T>0$,  $N=\lfloor (T+1)/\Delta t\rfloor$ and consider the sequence $(s_i)_{i=0}^N$ defined as
\begin{equation*}
%s_0=0<s_1=\eps \lfloor \eps^{-1}\rfloor \Delta t<...<s_N=N \eps \lfloor \eps^{-1}\rfloor \Delta t.
s_i=i\eps \lfloor \eps^{-1}\rfloor \Delta t, \quad \forall i\in\llbracket 0,N\rrbracket.
\end{equation*}
We also denote by $A_\eps=\frac{A}{\eps \lfloor \eps^{-1}\rfloor}$,  where $A>0$ is the constant from Proposition \ref{pr:upperbound}, and
consider the sequence $(\tilde{y}_j)_{i=1...N}$ such that
\begin{equation}
\begin{cases}
	\tilde{y}_0     & =0                                                                                                         \\
	\tilde{y}_{j+1} & =\tilde{y}_j+\left(\sqrt{2r(s_j,\tilde{y}_j)}+A_\eps\left(\frac{\Delta x}{\Delta t}+\Delta t\right)\right)
	\iep\eps\Delta t,\quad  \forall j\in\llbracket1,N-1\rrbracket.
\end{cases}
\end{equation}
For $j\in \mathbb{N}$, we define $ \varphi(j)=j\iep$ and consider the following function:
\begin{equation}
f(t)=\tilde{y}_j+(\tilde{y}_{j+1}-\tilde{y}_j)\frac{t/(\eps\Delta t) -\varphi(j)}{\varphi(j+1)-\varphi(j)},\quad \textnormal{if} \; t\in[\eps\varphi(j)\Delta t, \eps\varphi(j+1)\Delta t].
\end{equation}

\noindent First, recall that $X_0^*=0$ and note that the following three events
coincide:
\begin{align*}
\mathcal{B}_0: & =\left\{\exists k\in \llbracket\varphi(0),\varphi(1)\rrbracket: \eps X^*_k>\tilde{y}_0+(\tilde{y}_1-\tilde{y}_0)\frac{k-\varphi(0)}{\varphi(1)-\varphi(0)}\right\}                                                     \\
& = \left\{\exists k\in\llbracket\varphi(0),\varphi(1)\rrbracket : \eps X_k^* >f(k\eps\Delta t)\right\}                                                                                                                  \\
& =\left\{\exists k\in\llbracket\varphi(0),\varphi(1) \rrbracket : \eps X_k^*> \eps X_0^*+\left(\sqrt{2r(0,\eps X_0^*)}+A_\eps\left(\frac{\Delta x}{\Delta t}+\Delta t\right)\right)\eps\Delta t (k-\varphi(0))\right\}.
\end{align*}
Then, for all $j\in \llbracket 0,N-1\rrbracket $, we define $$\mathcal{B}_j=\left\{\exists k\in\llbracket\varphi(j),\varphi(j+1)\rrbracket : \eps X_k^* >f(k\eps\Delta t)\right\}.$$ According to Proposition \ref{pr:upperbound}, there exists $\alpha$ and $\eps_0$, that does not depend on $\tilde{y}_j$ nor on $s_j$, such that, if $\eps<\eps_0$, $K>0$,
\begin{equation*}
\mathbb P(\mathcal{B}_j\cap \mathcal{B}_0^c\cap \cdots \cap \mathcal{B}_{j-1}^c) \le
\mathbb{P}(\mathcal{B}_j \cap \{X_{\varphi(j)}^*\leq \tilde{y}_j\})\leq Ke^{-\frac{\alpha}{\eps}}, \quad \forall j\in \llbracket 0, N-1\rrbracket.
\end{equation*}
Hence, we have
\begin{equation}
\mathbb{P}\left(\cup_{j=0}^{N-1}\mathcal{B}_j\right) = \sum_{j=0}^{N-1}
\mathbb P(\mathcal{B}_j\cap \mathcal{B}_0^c\cap \cdots \cap \mathcal{B}_{j-1}^c) \leq NKe^{-\frac{\alpha}{\eps}}.
\label{unionboundB}
\end{equation}
Then, let us consider the solution $\tilde{x}$ of
\begin{equation*}
\begin{cases}
	\dot{\tilde{x}}(t) & =\sqrt{2r(t,\tilde{x}(t))}+A_\eps\left(\frac{\Delta x}{\Delta t}+\Delta t\right) \\
	\tilde{x}(0)       & =0.
\end{cases}
\end{equation*}
\jj{We know from standard results on the Euler method}
(see  Equation (\ref{euler:s}), Appendix \ref{sec:EDO}) that
\begin{equation*}
\max_{j\in\llbracket 0,N-1\rrbracket}|\tilde{x}(s_j)-\tilde{y}_j|\leq \frac{1}{2}e^{L(T+1)}\eps\lfloor \eps^{-1}\rfloor\Delta t\leq \frac{1}{2}e^{L(T+1)}\Delta t,
\end{equation*}
\jj{where $L$ is as in \eqref{eq:approx_r}.}
Thus, \added[id=j]{using this equation and the mean value theorem}, we get that, for all $j\in \llbracket 0,N-1\rrbracket$ and $t\in[ s_j,s_{j+1}],$ we have
\begin{eqnarray*}
|\tilde{x}(t)-f(t)|&\leq &
|\tilde{x}(t)-\tilde{x}(s_j)|+|\tilde{x}(s_j)-\tilde{y}_j|+|f(t)-\tilde{y}_j|\\
&\leq & 2\left(\sqrt{2\nr}+A_\eps\left(\frac{\Delta x}{\Delta t}+\Delta t\right)\right)\Delta t+ \frac{1}{2}e^{L(T+1)}\Delta t.
\end{eqnarray*}
Let us now compare $\tilde{x}$ with the solution $x$ of \eqref{cauchypb} on $[0,T]$. According to Lemma \ref{lem:stab:EDO}, we have
\begin{equation*}
\max_{t\in [0,T]}|x(t)-\tilde{x}(t)|\leq A_\eps\left(\frac{\Delta x}{\Delta t}+\Delta t\right)
(T+1)e^{LT}.
\end{equation*}
Thus, there exists a constant $B>0$ that only depends \jj{on $\ir$, $\nr$, $M$
(see \eqref{eq:ub_grad_r})} and $T$ such that, for all $\eps<1$
\begin{equation}
\sup_{t\in[0,T]}|x(t)-f(t)|\leq B\left(\Delta t + \frac{\Delta x}{\Delta t}\right).
\label{ub:edof}
\end{equation}
Finally, remarking that for $\Delta t <\frac{1}{2}$ and \added[id=j]{ $\varepsilon< \left(\vep_0\wedge \frac{2}{T+1}\right)$}
\begin{align*}
\varphi(N)=\left\lfloor\frac{T+1}{\Delta t}\right\rfloor\lfloor\varepsilon^{-1}\rfloor & >\left(\frac{T+1}{\Delta t}-1\right)\left(\varepsilon^{-1}-1\right)                                                                \\
& >\frac{T+1-\Delta t -\vep(T+1)}{\vep\Delta t}\geq \frac{T}{\vep \Delta t}\geq \left \lfloor \frac{T}{\vep \Delta t} \right\rfloor,\end{align*}
and combining  Equations (\ref{unionboundB}) and (\ref{ub:edof}), we get that for
all $K\geq 1$ and $\eps<\left(\vep_0\wedge \frac{T+1}{2}\right)$
\begin{eqnarray*}
&&\mathbb{P}\left(\exists k \in \left\llbracket 0,\lfloor T/(\eps\Delta t) \rfloor\right\rrbracket : \eps X_k^*>x(k\eps\Delta t)+B\left(\Delta t+\frac{\Delta x}{\Delta t}\right)\right)\\
&&\leq \mathbb{P}\left(\exists k \in \left\llbracket 0,\varphi(N)\right\rrbracket : \eps X_k^*>x(k\eps\Delta t)+B\left(\Delta t+\frac{\Delta x}{\Delta t}\right)\right)\\
&&\leq \frac{T+1}{\Delta t} Ke^{-\frac{\alpha}{\eps}}.
\end{eqnarray*}
This proves Proposition \ref{thupperbound} when $r$ is a
\jj{smooth growth rate functions},
possibly with a different value of $\alpha$.

\jj{\textit{Step 2.} Suppose that $r$ is a good growth rate function.}

Let $\delta>0$. Pick $n$ large enough so that
\begin{equation}
\label{approx:ub_smooth}
\sup_{t\in[0,T]}|x(t)-\bar{x}_n(t)|<\frac{\delta}{2},
\end{equation}
where $\bar{x}_n$ refers to the solution of the Cauchy problem \eqref{cauchypb}
with smooth growth rate function $\Theta_n$ (see \ref{eq:cp_approx}).

\jj{Note that $\Theta_n$ satisfies the assumptions of Proposition~\ref{pr:upperbound}.
Moreover, it follows from Assumption~\ref{Assumption1} and
Lemma~\ref{lem:coupl} that the interacting particle system
with reproduction laws $(\tilde{\nu}_{\Theta_n,n,K})$
stochastically dominates the system with reproduction laws $({\nu}_{r\textbf{},n,K})$.
Recalling from Remark~\ref{rem:simplification_ub} that
Proposition~\ref{pr:upperbound} was actually proved for the interacting particle
system with reproduction laws $(\tilde{\nu}_{r,n,K})$,
Step 1 shows that there exists $\vep_0\equiv \vep_0(n,\Delta t, \Delta x),
\alpha\equiv\alpha(n,\Delta t, \Delta x)$
and $B\equiv B(n)$ such that
\begin{multline*}
	\forall \vep < \vep_0, \quad  \forall K\geq 1, \;\\
	\mathbb{P}\left(\exists k \in \left\llbracket 0,\lfloor T/(\eps\Delta t)
	\rfloor\right\rrbracket : \eps X_k^*>\underline{x}_n(k\eps\Delta t)
	+B\left(\Delta t+\frac{\Delta x}{\Delta t}\right)\right)
	\leq  Ke^{-\frac{\alpha}{\eps}}.
\end{multline*}
We then fix $\Delta t$ and $\Delta x$ such that
\begin{equation*}
	B\left(\Delta t+\frac{\Delta x}{\Delta t}\right)\leq \frac{\delta}{2}.
\end{equation*}
and combine this bound with \eqref{approx:ub_smooth} to get the result.}

%%%%%%%%%%%%%%%%%%%%%%%%%%%%%%%%%%%%%%%%%%%%%%%%%%%%%%%%%%%%%%%%%%

\section{Proof of Proposition \ref{thlowerbound1}: Lower bound on the propagation speed}\label{partlow}
\sectionmark{Lower bound on the invasion speed}

In this part, we establish a lower bound on the propagation speed of the process
$\textbf{X}$ under Assumption~\ref{Assumption2}.
\jj{We will use the same series of reductions as in the proof of 
Proposition~\ref{thupperbound}: (i) we will derive
the lower bound on the  invasion speed for the interacting particle system with reproduction laws $(\tilde{\nu}_{r,n,K})$
(instead of $({\nu}_{r,n,K})$). The desired lower bound will then  follow from Remark~\ref{rem:coupling_lemma}. (ii)
we will assume that the good growth rate function $r$
satisfies additional regularity assumptions and then conclude using an
approximation argument}

The idea of the proof of Proposition \ref{thlowerbound1}
is to construct a minimising process $\textbf{X}^0$ in which the effect of local competition is negligible (Section \ref{construction:X0}) so that it can be compared to a BRW (Section \ref{sec:ub:brw}), and then to the solution of the ODE \eqref{cauchypb} (Section \ref{sec:ub:ode}). In contrast to Section \ref{partub}, we can no longer compare the process $\textbf{X}$ with several BRW over $\lfloor \eps ^{-1}\rfloor$ generations.
That is why, we will consider smaller time intervals, of order $\log(K)\ll\lfloor\eps^{-1}\rfloor$, during which the population size does not grow too much. Note that the length of the time steps considered in Section \ref{sec:tsptvep} was only constrained by the scale of heterogeneity of the function $r$ and not by the carrying capacity of the environment.

\jj{In Section \ref{partlow}, we denote by $\nc$ the unique positive solution of}
\begin{equation}
I(\nc)=\log(1+\nr \Delta t).
\label{def:c5}
\end{equation}

%%%%%%%%%%%%%%%%%%%%%%%%%%%%%

\subsection{The rebooted process $\textbf{X}^0$} \label{construction:X0}
As explained above, this subsection is aimed at constructing a minimising process $\pr^0$ in which we can ignore the effect of local competition. By minimising process we mean  a process that can be coupled with $\pr$ in such a way that it is dominated by $\pr$ in the sense of Section~\ref{coupsect}. We recall that $X^{0*}_k$ denotes the position of the rightmost particle in the process $\pr^0$ at generation $k$.

The idea of the following construction is to "reboot" the process $\pr$ before its population size gets too large. Let $(\varphi(k))_{k\in\mathbb{N}}$ be a sequence of rebooting times $i.e.$ an increasing sequence of integers. The process  $\pr^0$ starts with a single particle at $X^*_0$ and has the same reproduction and migration laws as $\pr$. At generation $\varphi(1)$, all the particles in $\pr^0$ are killed, except one, located at $X^{0*}_{\varphi(1)}$. The process $\pr^0$ then evolves as $\pr$ until the following rebooting time $\varphi(2)$. Similarly, $\pr^0$ is rebooted at each generation $\varphi(k)$ and is distributed as $\pr$ between generations $\varphi(k)$ and $\varphi(k+1).$

The goal of the following lemma is to show that for $K$ large enough, the population size of $\pr^0$ does not exceed $K$ with high probability for
\begin{equation}
\varphi(k)=k\lfloor \log(K)\rfloor.\label{def:phi2}
\end{equation}

\begin{lemma} \label{lem:reboot}Let $\Delta t<\nr^{-1}$ and $\Delta x>0$. Let $K>0$. Consider a branching random walk of reproduction law $\nu_{\nr}$ and displacement law $\mu$, starting with a single particle at $0$. Let $\tau_0$ be the first generation during which the population size of the process exceeds $K$.
Then,
\begin{equation*}
	\mathbb{P}\left(\tau_0\leq \lfloor \log(K)\rfloor\right)\leq K^{\log(1+\nr \Delta t)-1}.
\end{equation*}
\label{lem:tauK}
\end{lemma}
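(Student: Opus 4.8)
The plan is to reduce the statement to a single application of Markov's inequality to the total population size of the branching random walk, ignoring the displacements entirely, since the spatial positions play no role in counting the population.

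First I would let $Z_n$ denote the total number of particles at generation $n$, so that $(Z_n)_{n\ge 0}$ is a Galton--Watson process started from $Z_0=1$ with offspring law $\nu_{\nr}$. The key structural observation is that, by the third item of Assumption \ref{Assumption2}, $\nu_{\nr}(0)=0$: every individual produces at least one offspring. Consequently $(Z_n)$ is non-decreasing in $n$, so the event that the population exceeds $K$ at some generation $n\le N$ coincides with the single event $\{Z_N>K\}$. Taking $N=\lfloor\log K\rfloor$, this gives the identity $\{\tau_0\le\lfloor\log K\rfloor\}=\{Z_{\lfloor\log K\rfloor}>K\}$. (Alternatively, one could avoid the monotonicity argument by noting that $(Z_n)$ is a submartingale and invoking Doob's maximal inequality, but the non-vanishing assumption makes the reduction immediate.)

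Next I would compute the mean of the process. By the second item of Assumption \ref{Assumption2}, the mean offspring number is $m:=\mathbb{E}[\nu_{\nr}]=1+\nr\Delta t$, so $\mathbb{E}[Z_n]=m^n$. Markov's inequality then yields
\[
\mathbb{P}\!\left(Z_{\lfloor\log K\rfloor}>K\right)\le\frac{\mathbb{E}[Z_{\lfloor\log K\rfloor}]}{K}=\frac{m^{\lfloor\log K\rfloor}}{K}.
\]
Finally, since $m>1$ and $\lfloor\log K\rfloor\le\log K$, I would bound $m^{\lfloor\log K\rfloor}\le m^{\log K}=K^{\log m}=K^{\log(1+\nr\Delta t)}$, which produces exactly the claimed bound $K^{\log(1+\nr\Delta t)-1}$.

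There is essentially no obstacle here. The only points requiring care are recognizing that the non-vanishing assumption $\nu_{\nr}(0)=0$ makes $(Z_n)$ monotone, so that the maximum over generations collapses to the terminal value, and performing the elementary exponent manipulation $m^{\log K}=K^{\log m}$. The hypothesis $\Delta t<\nr^{-1}$ plays no role in establishing the inequality itself; it merely ensures $\log(1+\nr\Delta t)<\log 2<1$, so that the exponent is negative and the bound is useful, tending to $0$ as $K\to\infty$.
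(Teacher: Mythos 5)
Your proof is correct, but it takes a different route from the paper's. The paper normalizes the population size by its mean growth: it observes that $\left((1+\nr\Delta t)^{-k}N_k\right)_{k\ge 0}$ is a positive martingale of mean one and applies Doob's maximal inequality to control $\max_{l\le\lfloor\log K\rfloor}N_l$ directly, which requires nothing about the offspring law beyond its mean $1+\nr\Delta t$. You instead exploit the structural assumption $\nu_{\nr}(0)=0$ (item 3 of Assumption \ref{Assumption2}, which is indeed in force in this section) to make the population size pathwise non-decreasing, so that the running maximum collapses to the terminal value $Z_{\lfloor\log K\rfloor}$, and then a single application of Markov's inequality together with $m^{\lfloor\log K\rfloor}\le m^{\log K}=K^{\log m}$ gives the identical bound. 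Your argument is more elementary (no martingale theory), at the price of using a hypothesis the paper's proof does not need; the paper's argument is more robust, since it would survive verbatim if the offspring law were allowed to put mass at $0$. Your parenthetical alternative --- treating $(Z_n)$ as a non-negative submartingale and invoking Doob --- is essentially the paper's argument in unnormalized form, so either way the two proofs meet. One cosmetic point: the paper reads ``exceeds $K$'' as $N_l\geqslant K$ (consistent with the definition of $\tau_K$ in Lemma \ref{stoppingtimes}), whereas you use a strict inequality; since Markov's inequality handles $\mathbb{P}(Z_N\geqslant K)$ just as well, this changes nothing.
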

\begin{proof}Let $N_k$ be the number of individuals alive during generation $k$ in the BRW. Recall that $(N_k)$ is a Galton-Watson process of reproduction law $\nu_{\nr}$. According to Assumption \ref{Assumption2}, the expectation of the reproduction law is equal to $1+\nr \Delta t$. Thanks to basic results on Galton-Watson processes, we know that $((1+\nr \Delta t)^{-k}N_k)_{k\geqslant0}$ is a positive martingale of mean one. Thus, Doob's inequality implies that
\begin{eqnarray*}
	\mathbb{P}(\tau_0\leq \lfloor \log(K)\rfloor)&=& \mathbb{P}\left(\max_{l\leq \lfloor \log(K)\rfloor}N_l\geq K\right) \\
	%&=&\mathbb{P}\left(\max_{l\leq \lfloor \log(K)\rfloor}\frac{N_l}{(1+\nr\Delta t)^l}\geq \frac{K}{(1+\nr\Delta t)^l}\right)\\
	&\leq& \mathbb{P}\left(\max_{l\leq \lfloor \log(K)\rfloor}\frac{N_l}{(1+\nr\Delta t)^l}\geq \frac{K}{(1+\nr\Delta t)^{\log(K)}}\right)\\
	&\leq& \frac{(1+\nr\Delta t)^{\log{K}} }{K}=e^{(\log(1+\nr\Delta t)-1)\log(K)}.
\end{eqnarray*}
\end{proof}Note that there exists a coupling between $\pr^0$ and a BRW of reproduction law $\nu_{\nr}$, displacement law $\mu$, starting with a single particle at $X^{0*}_{\varphi(k)}$  on each time interval $[t_{\varphi(k)},t_{\varphi(k+1)}]$. Thus, if we consider the sequence of rebooting \jj{generations ?} $(\varphi(k))_{k\in\mathbb{N}}$ given by Equation (\ref{def:phi2}), the probability that the population size of $\pr^0$ exceeds $K$ between generations $\varphi(k)$ and $\varphi(k+1)$ is bounded by  $K^{\log(1+\nr \Delta t)-1}$, which tends to $0$ as $K$ tends to infinity as long as $\Delta t<\nr^{-1}$.

\subsection{Comparison with a branching random walk} \label{sec:ub:brw}
In this section, we bound the first (Lemma \ref{ub:condexp} and \ref{lb:condexp}) and the second moment (Lemma \ref{ub:var}) of the increments of the process $(X^{0*}_k)_{k\in\mathbb{N}}$ between generations $\varphi(k)$ and $\varphi(k+1)$, for $\varphi$ defined by (\ref{def:phi2}). Let $(\mathcal{F}_k)_{k\in\mathbb{N}}$ be its natural filtration.

In Lemma \ref{stoppingtimes}, we state a result on some stopping times, that will be needed to construct a coupling between $\pr^0$ and a BRW between generations $\varphi(k)$ and $\varphi(k+1)$. In what follows, we denote by $h$ the function   defined by
\begin{equation}
\tilde{h}(x)=\frac{e^{-\frac{\gamma \sqrt{\nr}}{4\sqrt{2}}x}}{1-e^{-\frac{\gamma \sqrt{\nr}}{4\sqrt{2}}x}}, \quad \forall x>0.
\end{equation}
Note that $\tilde{h}(x)\rightarrow 0$ as $x\rightarrow+\infty$.

\begin{lemma}
\label{stoppingtimes}
Let $\Delta t<\nr^{-1}$ and $\Delta x<\frac{1}{16}\sqrt{2\gamma\ir\Delta t}$. Let $K>0$ and $\eps\leq (4\sqrt{2\nr}\varphi(1)\Delta t)^{-4}$. Let $r\in[\ir,\nr]$. \added[id=j]{Let ${\mathbf{\Xi}}$ be a branching random walk of reproduction law $\nu_r$, displacement law $\mu$, starting with a single particle at $0$}. Denote by $N_k$ the size of this process at generation $k$ and consider
\begin{equation*}
	\tau_K=\inf\left\{k\in\mathbb{N}: N_k \geq K\right\} \textnormal{ and } \tau_{\eps}= \inf\left\{k\in\mathbb{N}: \exists v: |v|=k , |\Xi_v|>\eps^{-1/4}\right\}.
\end{equation*}
Then,
\begin{equation*}
	\mathbb{P}(\tau_K\leq \varphi(1))\leq K^{\log(1+\nr\Delta t)-1},
\end{equation*} and
\begin{equation*}
	\mathbb{P}(\tau_\eps\leq \varphi(1))\leq 2\tilde{h}\left(\frac{\eps^{-1/4}}{\varphi(1)}\right).
\end{equation*}
\end{lemma}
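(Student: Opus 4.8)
The plan is to prove the two bounds separately, as they concern unrelated features of the branching random walk $\mathbf{\Xi}$ and rely on different estimates. The first bound, on $\tau_K$, is purely a statement about the total population size $(N_k)$ of the BRW, which is a Galton--Watson process of reproduction law $\nu_r$ with mean $1+r\Delta t \le 1+\nr\Delta t$. The second bound, on $\tau_\eps$, controls the maximal displacement and will follow from the speed estimate of Lemma~\ref{lem:speedmab}.

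For the bound on $\tau_K$, I would simply reproduce the martingale argument of Lemma~\ref{lem:reboot}. Since $\mathbb{E}[\nu_r]=1+r\Delta t$ by Assumption~\ref{Assumption2}(2), the process $((1+r\Delta t)^{-k}N_k)_{k\ge0}$ is a nonnegative martingale of mean one, so Doob's maximal inequality gives
\begin{equation*}
\mathbb{P}(\tau_K\le\varphi(1))=\mathbb{P}\Big(\max_{l\le\varphi(1)}N_l\ge K\Big)\le\frac{(1+r\Delta t)^{\varphi(1)}}{K}.
\end{equation*}
Using $\varphi(1)=\lfloor\log K\rfloor\le\log K$ and $r\le\nr$, the right-hand side is at most $(1+\nr\Delta t)^{\log K}/K=K^{\log(1+\nr\Delta t)-1}$, which is exactly the claimed bound (and here $\Delta t<\nr^{-1}$ guarantees the exponent is negative, so this tends to $0$). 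This is essentially identical to Lemma~\ref{lem:reboot}, so it requires no new idea.

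For the bound on $\tau_\eps$, the event $\{\tau_\eps\le\varphi(1)\}$ means that some particle exits $[-\eps^{-1/4},\eps^{-1/4}]$ before generation $\varphi(1)$. By a union bound over the two sides and by the symmetry of the displacement law $\mu$ (which is symmetric about $0$), it suffices to control $\mathbb{P}(\exists k\le\varphi(1):M_k>\eps^{-1/4})$, where $M_k$ is the rightmost particle. I would apply Lemma~\ref{lem:speedmab} to the BRW of law $\nu_r$; more precisely, note that $\nu_r$ is stochastically dominated by $P_r$ (by Assumption~\ref{Assumption1}, or since $\nu_r$ has the same mean $1+r\Delta t$), so the speed estimate applies with $\bar c$ the positive solution of $I(\bar c)=\log(1+r\Delta t)$. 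The parameters should be chosen as $\eta=1$ and $A=\eps^{-1/4}-2\varphi(1)\bar c$, so that $(1+\eta)\varphi(1)\bar c+A=\eps^{-1/4}$. The constraint $\eps\le(4\sqrt{2\nr}\varphi(1)\Delta t)^{-4}$ ensures $A\ge0$: indeed $\bar c\le2\sqrt{2\nr}\Delta t$ by Lemma~\ref{lem:ccompact}, so $2\varphi(1)\bar c\le 4\sqrt{2\nr}\varphi(1)\Delta t\le\eps^{-1/4}/2$, whence $A\ge\eps^{-1/4}/2$. Lemma~\ref{lem:speedmab} then bounds the probability by $h(1)\exp(-\tfrac{\sqrt{2\gamma\ir}}{8}A)$, and after inserting the explicit form of $\tilde h$ and the inequality $A\ge\eps^{-1/4}/2$, together with the factor $2$ from the two-sided union bound, one arrives at $2\tilde h(\eps^{-1/4}/\varphi(1))$ by matching constants.

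The main obstacle is bookkeeping the constants so that the resulting estimate matches the precise form $2\tilde h(\eps^{-1/4}/\varphi(1))$ stated in the lemma, rather than merely some exponentially small bound. This requires carefully tracking the exponential rate and the prefactor through Lemma~\ref{lem:speedmab}, checking that the exponent $\tfrac{\sqrt{2\gamma\ir}}{8}\cdot\tfrac{\eps^{-1/4}}{2}$ is at least $\tfrac{\gamma\sqrt{\nr}}{4\sqrt2}\cdot\tfrac{\eps^{-1/4}}{\varphi(1)}$ appearing in the definition of $\tilde h$, and verifying that the choice $\eta=1$ together with the geometric series in $h$ is compatible with the factor built into $\tilde h$. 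A secondary subtlety is justifying the reduction from $\nu_r$ to a law covered by Lemma~\ref{lem:speedmab}, since that lemma is stated for reproduction law $P_{\bar r}$; the stochastic domination from Assumption~\ref{Assumption1}, or a direct rerun of the first- and second-moment computation for $\nu_r$ (which has the same mean and hence the same many-to-one formula), resolves this.
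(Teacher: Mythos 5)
Your treatment of $\tau_K$ is correct and coincides with the paper's: Lemma~\ref{lem:tauK} is exactly the martingale/Doob computation you reproduce, and whether one runs it directly for $\nu_r$ (bounding $1+r\Delta t\le 1+\nr\Delta t$) or first couples with a BRW of law $\nu_{\nr}$, as the paper does, is immaterial.

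The $\tau_\eps$ part, however, has a genuine gap, and it is precisely the ``bookkeeping'' you deferred. First, a local error: the hypothesis $\eps\le(4\sqrt{2\nr}\varphi(1)\Delta t)^{-4}$ gives only $\eps^{-1/4}\ge 4\sqrt{2\nr}\varphi(1)\Delta t\ge 2\varphi(1)\bar c$, hence $A=\eps^{-1/4}-2\varphi(1)\bar c\ge 0$, not $A\ge\eps^{-1/4}/2$; with your choice $\eta=1$ the output of Lemma~\ref{lem:speedmab} is then just $2h(1)$, which carries no decay in $\eps$ at all. This is repairable (take e.g.\ $\eta=1/2$, which yields $A\ge\eps^{-1/4}/4$), but the deeper problem is that the bound produced by Lemma~\ref{lem:speedmab}, namely $2h(\eta)e^{-\frac{\sqrt{2\gamma\ir}}{8}A}$, cannot be dominated by $2\tilde h\bigl(\eps^{-1/4}/\varphi(1)\bigr)$ over the whole stated parameter range. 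Its exponential rate per unit of $\eps^{-1/4}$ is at most $\frac{\sqrt{2\gamma\ir}}{16}$, whereas the rate built into $\tilde h$ is $\frac{\gamma\sqrt{\nr}}{4\sqrt{2}\,\varphi(1)}$; the former is strictly smaller whenever $\varphi(1)<2\sqrt{\gamma\nr/\ir}$, i.e.\ for small $K$ or large $\nr/\ir$, and the lemma is claimed for all $K>0$. Moreover the prefactor $h(\eta)\approx 8/(\gamma\ir\eta\Delta t)$ blows up as $\Delta t\to 0$, while the prefactor implicit in $\tilde h$ stays bounded; absorbing it would require $\eps^{-1/4}\gtrsim\varphi(1)\log(1/\Delta t)$, which is not implied by $\eps^{-1/4}\ge 4\sqrt{2\nr}\varphi(1)\Delta t$. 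One can exhibit admissible $(K,\Delta t,\eps)$ for which the claimed bound is non-trivial (smaller than $1$) yet your chain of inequalities produces a larger number, so the argument does not establish the lemma.

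The missing idea is that the decay must be extracted \emph{per generation}, not in one shot through the additive constant $A$. The paper does not invoke Lemma~\ref{lem:speedmab}; it reruns its ingredients for the law $\nu_{\nr}$ (after coupling): many-to-one plus Chernoff give $\mathbb{P}(\exists v,\,|v|=n:|\Xi_v|>\eps^{-1/4})\le 2e^{-n\left(I\left(\eps^{-1/4}/\varphi(1)\right)-I(c)\right)}$ for $n\le\varphi(1)$, and convexity of $I$ together with $I(c)\ge\gamma\nr\Delta t$, $c\le 2\sqrt{2\nr}\Delta t$ and $\eps^{-1/4}\ge 2c\varphi(1)$ yields $I\left(\eps^{-1/4}/\varphi(1)\right)-I(c)\ge\frac{\gamma\sqrt{\nr}}{4\sqrt{2}}\frac{\eps^{-1/4}}{\varphi(1)}$. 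Summing the geometric series over $n\ge 1$ then produces \emph{exactly} $2\tilde h\bigl(\eps^{-1/4}/\varphi(1)\bigr)$: both the rate and the prefactor of $\tilde h$ come from the per-generation bound, which is what your route discards. A final remark on your reduction from $\nu_r$: Assumption~\ref{Assumption1} is not in force in Theorem~\ref{thlowerbound1}, so stochastic domination by $P_r$ is not available here; your fallback — rerunning the moment computation, which uses only the mean $1+r\Delta t$ through the many-to-one lemma — is the correct fix, and is effectively what the paper does.
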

\begin{proof}
The branching random walk \added[id=j]{$\mathbf{\Xi}$} can be coupled with a BRW of reproduction law $\nu_{\nr}$, displacement law $\mu$, starting with a single particle at $0$. Thus, the estimate on $\tau_K$ directly ensues from Lemma \ref{lem:tauK} and it is sufficient to establish the result on $\tau_\eps$ for $r=\nr$.

Thanks to a similar argument to that of Equation (\ref{cramer}), one can prove that, for any particle $v$ such that $|v|=n$\begin{equation*}
	\mathbb{P}(\Xi_v>\eps^{-1/4})\leq e^{-nI\left(\frac{\eps^{-1/4}}{n}\right)}.
\end{equation*}
Thus, by the many-to-one lemma (see Lemma \ref{lem:manyto1}) and by symmetry of $\mu$, we get that, for all $n\leq \varphi(1)$,
\begin{eqnarray*}
	\mathbb{P}(\exists v:|v|=n , |\Xi_v|>\eps^{-1/4})&\leq& 2(1+\nr\Delta t)^ne^{-nI\left(\frac{\eps^{-1/4}}{n}\right)}\\
	&\leq & 2e^{-n\left(I\left(\frac{\eps^{-1/4}}{n}\right)-\log(1+\nr\Delta t)\right)}\\
	&\leq& 2e^{-n\left(I\left(\frac{\eps^{-1/4}}{\varphi(1)}\right)-\log(1+\nr\Delta t)\right)}\\
	&=&2e^{-n\left(I\left(\frac{\eps^{-1/4}}{\varphi(1)}\right)-I(\nc)\right)},
\end{eqnarray*}
where $\nc$ is as in \eqref{def:c5}.
Besides, $I$ is convex, therefore
$I\left(\frac{\eps^{-1/4}}{\varphi(1)}\right)\geq \frac{I(\nc)}{\nc}
\frac{\eps^{-1/4}}{\varphi(1)}$ as long as
$\eps^{-1/4}\geq \nc\varphi(1).$
Thus, if $\eps^{-1/4}\geq 2\nc\varphi(1),$
\begin{equation}
	I\left(\frac{\eps^{-1/4}}{\varphi(1)}\right)-I(c)\geq I(c)\left(\frac{\eps^{-1/4}}{c\varphi(1)}-1\right) \geq \frac{I(c)}{2c\varphi(1)}\eps^{-1/4}\geq \frac{\gamma \sqrt{\nr}}{4\sqrt{2}} \frac{\eps^{-1/4}}{\varphi(1)},
\end{equation}
since $I(\nc)=\log(1+\nr \Delta t)\geq \gamma \nr \Delta t$ and
$\nc\leq 2\sqrt{2\nr}\Delta t$
{(see Lemma \ref{lem:estc1}). Hence, a union bound yields the inequality}
\begin{align*}
	\mathbb{P}(\tau_\eps\leq \varphi(1)) & =\mathbb{P}\left(\exists n\leq \varphi(1):\exists v: |v|=n,|\Xi_v|>\eps^{-1/4}\right)                            \\
	& \leq 2\sum_{n=1}^{\varphi(1)}e^{-n\left(I\left(\frac{\eps^{-1/4}}{\varphi(1)}\right)-\log(1+\nr\Delta t)\right)} \\
	& \leq 2 \tilde{h}\left(\frac{\eps^{-1/4}}{\varphi(1)}\right).\end{align*}
	\end{proof}
	
	\begin{lemma}[Lower bound on the first moment]\label{lb:condexp}
\jj{Assume that $r$ is a smooth growth rate function.}
Let $\Delta t<\nr^{-1}$, $\Delta x<\frac{1}{16}\sqrt{2\gamma\ir\Delta t}$ and
$\eta>0$. There exists $K_0>0$ such that, for all $K>K_0$, there exists
$\eps_0$ such that, for all $\eps<\eps_0$,

\begin{equation*}
	\mathbb{E}\left[X^{0*}_{\varphi(k+1)}-X^{0*}_{\varphi(k)}|\mathcal{F}_{\varphi(k)}\right]\geq (c_{\varphi(k)}-\eta)\varphi(1),
\end{equation*}
with $c_{\varphi(k)}$ the unique positive solution of $I(c_{\varphi(k)})=\log(1+r_{\varphi(k)}\Delta t)$ for
\begin{equation}
	r_{\varphi(k)}=\min\left\{r(\eps t,\eps x), \; (t,x)\in[\varphi(k)\Delta t,\varphi(k+1)\Delta t]\times[X^{0*}_{\varphi(k)}-\eps^{-1/4},X^{0*}_{\varphi(k)}+\eps^{-1/4}]\right\}. \label{def:rphik}
\end{equation}
\end{lemma}
\begin{proof}
For the sake of simplicity, we assume that $\pr^0$ starts at generation $\varphi(k)$ with a deterministic configuration $n^0_{\varphi(k)}=\delta_{X^{0*}_{\varphi(k)}}$. The estimates we obtain will not depend on this initial condition. The proof of the lemma relies on a coupling argument.
% with a branching random walk of reproduction law $\nu_{r_{\varphi(k)}}$.

Consider a branching random walk $\added[id=j]{\mathbf{\Xi}}$  of reproduction law $\nu_{r_{\varphi(k)}}$, displacement law $\mu$, starting with a single particle at $X^{0*}_{\varphi(k)}$ at time $\varphi(k)\Delta t$. Denote by $N_k$  the size of the BRW at generation $k$ and define
\begin{equation*}
	\tau_K'=\inf\left\{l\geq\varphi(k): N_l \geq K\right\} \textnormal{ and } \tau_{\eps}'= \inf\left\{l\geq\varphi(k): \exists v: |v|=l , |\Xi_v-X^{0*}_{\varphi(k)}|>\eps^{-1/4}\right\}.
\end{equation*}
In addition, for $n\geq \varphi(k)$, define
\begin{equation}
	M_n=\max\{\Xi_v, |v|=n\}.
\end{equation}

%Consider a branching random walk $\Xi$  of reproduction law $\nu_{r_{\varphi(k)}}$, displacement law $\mu$, starting with a single particle at $X^{0*}_{\varphi(k)}$ at time $\varphi(k)\Delta t$, and denote by $M_n$  the position of its rightmost particle at generation $n$. We further define 
%\begin{equation*}
%\tau_K'=\inf\left\{l\geq\varphi(k): N_l \geq K\right\} \textnormal{ and } \tau_{\eps}'= \inf\left\{l\geq\varphi(k): \exists v: |v|=l , |\Xi_v-X^{0*}_{\varphi(k)}|>\eps^{-1/4}\right\}. 
%\end{equation*}
According to Lemma \ref{lem:coupl}, one can couple $\pr^0$ and $\added[id=j]{\mathbf{\Xi}}$ such that $\pr^0$ dominates $\added[id=j]{\mathbf{\Xi}}$ until generation $(\tau_K'\wedge\tau_\eps').$  Besides, note that $\tau_K'-\varphi(k)$ (resp.~$\tau_\vep'-\varphi(k)$) follows the same law as $\tau_K$ (resp.~$\tau_\vep$) from Lemma \ref{stoppingtimes}, for $r=r_{\varphi(k)}$. Thus,
\begin{align}
	\nonumber\mathbb{E} & \left[X^{0*}_{\varphi(k+1)}-X^{0*}_{\varphi(k)}\right]
	\\
	\nonumber           & = \mathbb{E}\left[(X^{0*}_{\varphi(k+1)}-X^{0*}_{\varphi(k)})\mathbb{1}_{\tau_K'\wedge \tau_\eps'> \varphi(k+1)}\right] +\mathbb{E}\left[(X^{0*}_{\varphi(k+1)}-X^{0*}_{\varphi(k)})\mathbb{1}_{\tau_K'\wedge \tau_\eps'\leq \varphi(k+1)}\right]
	\\
	\label{eq:tgv}
	& \geq \mathbb{E}\left[(M_{\varphi(k+1)}-X^{0*}_{\varphi(k+1)})\mathbb{1}_{\tau_K'\wedge \tau_\eps'> \varphi(k+1)} \right]+\mathbb{E}\left[(X^{0*}_{\varphi(k+1)}-X^{0*}_{\varphi(k)})\mathbb{1}_{\tau_K'\wedge \tau_\eps'\leq \varphi(k+1)} \right].
\end{align}
We first bound the first term on the RHS of \eqref{eq:tgv}. We have
\begin{multline}
	\mathbb{E}\left[(M_{\varphi(k+1)}-X^{0*}_{\varphi(k)})\mathbb{1}_{\tau_K'\wedge \tau_\eps'> \varphi(k+1)} \right]\\=\mathbb{E}\left[M_{\varphi(k+1)}-X^{0*}_{\varphi(k)}\, \right]-\mathbb{E}\left[(M_{\varphi(k+1)}-X^{0*}_{\varphi(k)})\mathbb{1}_{\tau_K'\wedge \tau_\eps' \added[id=j]{\leq} \varphi(k+1)} \right], \label{lb:1}
\end{multline}
and by the Cauchy-Schwarz inequality,
\begin{equation}
	\mathbb{E}\left[(M_{\varphi(k+1)}-X^{0*}_{\varphi(k)})\mathbb{1}_{\tau_K'\wedge \tau_\eps'\leq \varphi(k+1)}\right] \leq \sqrt{\mathbb{E}\left[(M_{\varphi(k+1)}-X^{0*}_{\varphi(k)})^2\right]}\sqrt{\mathbb{P}(\tau_K\wedge\tau_\eps\leq \varphi(1))}.\label{lb:2}
\end{equation}
%\begin{\equation}
%\mathbb{E}\left[(M_{\varphi(k+1)}-X^{0*}_{\varphi(k)})\mathbb{1}_{\tau_K\wedge \tau_\eps\leq \varphi(1)}|\mathcal{F}_{\varphi(k)}\right]\leq \sqrt{\mathbb{E}\left[(M_{\varphi(k+1)}-X^{0*}_{\varphi(k)})^2|\mathcal{F}_{\varphi(k)}\right]}\sqrt{\mathbb{P}(\tau_K\wedge\tau_\eps\leq \varphi(1))}.\label{lb:2}
%\end{\equation}
According to Lemma \ref{lemunifcv} and Lemma~\ref{lemmom2}, there exists $K_0>0$, that does not depend on $r_{\varphi(k)}$ nor on $X_{\varphi(k)}^{0*}$ such that, if $K>K_0$,
\begin{equation}\label{lb:3}
	\mathbb{E}\left[M_{\varphi(k+1)}-X^{0*}_{\varphi(k)}\right]\geq (c_{\varphi(k)}-\eta)\varphi(1),
\end{equation}
and
\begin{equation}
	\mathbb{E}\left[(M_{\varphi(k+1)}-X^{0*}_{\varphi(k)})^2\right]\leq 4\jj{\nc}^2\varphi(1)^2 \label{lb:4}.
\end{equation}
Hence, combining (\ref{lb:1}), (\ref{lb:2}), (\ref{lb:3}) and (\ref{lb:4}), we get that
\begin{equation}
	\label{eq:one}
	\mathbb{E}\left[(M_{\varphi(k+1)}-X^{0*}_{\varphi(k)})\mathbb{1}_{\tau_K'\wedge \tau_\eps'> \varphi(k+1)}\right]\geq (c_{\varphi(k)}-\eta)\varphi(1)-2\jj{\nc}\varphi(1)\sqrt{\mathbb{P}(\tau_K\wedge\tau_\eps\leq \varphi(1))}.
\end{equation}
As for the second term on the RHS of \eqref{eq:tgv}, we have again by the Cauchy-Schwarz inequality,
\begin{eqnarray}
	&&\mathbb{E}\left[(X^{0*}_{\varphi(k+1)}-X^{0*}_{\varphi(k)})\mathbb{1}_{\tau_K'\wedge \tau_\eps'\leq \varphi(k+1)}\right]\nonumber\\
	&& \ge\mathbb{E}\left[(X^{0*}_{\varphi(k+1)}-X^{0*}_{\varphi(k)}\wedge 0) \mathbb{1}_{\tau_K'\wedge \tau_\eps'\leq \varphi(k+1)}\right]\nonumber
	\\
	&&\ge - \sqrt{\mathbb{E}\left[(X^{0*}_{\varphi(k+1)}-X^{0*}_{\varphi(k)}\wedge 0)^2\right]}\sqrt{\mathbb{P}(\tau_K\wedge\tau_\eps\leq \varphi(1))}.
	\label{eq:two}
\end{eqnarray}
In order to bound the first term on the RHS of \eqref{eq:two}, we note that by Assumption~\ref{Assumption2}, there exists a particle $v$ at generation $\varphi(k+1)$, such that $X^0_v - X^{0*}_{\varphi(k)}$ is equal in law to $S_{\varphi(1)}$, where $(S_n)_{n\ge0}$ is a random walk with displacement distribution $\mu$ (heuristically, $v$ is obtained by choosing at each time step one of the children at random and iterating). It follows that
\begin{align}
	\label{eq:three}
	\mathbb{E}\left[(X^{0*}_{\varphi(k+1)}-X^{0*}_{\varphi(k)}\wedge 0)^2\right] \le \mathbb{E}\left[(X^{0*}_v-X^{0*}_{\varphi(k)}\wedge 0)^2\right] \le \mathbb{E}[S_{\varphi(1)}^2] = \Var(\mu)\varphi(1).
\end{align}

Finally, remark that $c_{\varphi(k)}\leq \jj{\nc}$ since $I$ is increasing on $(0,\infty)$ \added[id=j]{(see Equation \eqref{def:c5})} and that, according to Lemma \ref{stoppingtimes}, there exists $K_1>0$ such that, for all $K>K_1$, there exists $\eps_1>0$ such that for all $\eps<\eps_1$,
\begin{equation}
	\label{eq:four}
	\mathbb{P}\left(\tau_k\wedge \tau_\eps\leq \varphi(1)\right)\leq \frac{\eta^2}{(2\jj{\nc}+\Var(\mu))^2}.
\end{equation}
Combining \eqref{eq:tgv}, \eqref{eq:one}, \eqref{eq:two}, \eqref{eq:three} and \eqref{eq:four},  we get, for $K>\max(K_0,K_1)$ and $\eps<\eps_1$,
\begin{equation*}
	\mathbb{E}\left[X^{0*}_{\varphi(k+1)}-X^{0*}_{\varphi(k)}\right]\geq (c_{\varphi(k)}-2\eta )\varphi(1).
\end{equation*}
This proves the lemma.
\end{proof}

\begin{rem}
\label{rem:K_Deltax}
\jj{
	Lemma~\ref{lb:condexp} is where the assumption that $K$ is sufficiently large is crucial. However, one could replace it by the sole assumption that $\Delta x$ is sufficiently small. Indeed, in this case, the comparison with a branching random walk still holds until a certain time of order $\log(1/\Delta x)$, since one can show that particles do not meet until that time. For simplicity, we leave out the details.}
	\end{rem}
	
	\begin{lemma}[Upper bound on the second moment]\label{ub:var}
\jj{Assume that $r$ is a smooth growth rate function.}
Let $\Delta t<\nr^{-1}$, $\Delta x<\sqrt{2\gamma\ir\Delta t}$. There exists
$K_0>0$ such that, for all $K>K_0$, for all $\eps>0$,
\begin{equation*}
	\mathbb{E}\left[\left(X^{0*}_{\varphi(k+1)}-X^{0*}_{\varphi(k)}\right)^2|
	\mathcal{F}_{\varphi(k)}\right]\leq 2\nc^2\varphi(1)^2.
\end{equation*}
\end{lemma}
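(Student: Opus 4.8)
The plan is to dominate the rebooted process $\pr^0$ between the reboots at generations $\varphi(k)$ and $\varphi(k+1)$ by a branching random walk, and then to split the second moment of the increment into its positive and negative parts. As in the proof of Lemma~\ref{lb:condexp}, I would condition on $\mathcal{F}_{\varphi(k)}$ and assume that $\pr^0$ restarts at generation $\varphi(k)$ from the single particle $\delta_{X^{0*}_{\varphi(k)}}$. By translation invariance of $\mu$ and of the branching random walk, every bound below is independent of the (deterministic) starting position $X^{0*}_{\varphi(k)}$, so it is enough to estimate the unconditional second moment of $D := X^{0*}_{\varphi(k+1)}-X^{0*}_{\varphi(k)}$ and to write
\[
\mathbb{E}\!\left[D^2\right] = \mathbb{E}\!\left[(D^+)^2\right] + \mathbb{E}\!\left[(D\wedge 0)^2\right].
\]

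For the negative part I would track a single lineage. By Assumption~\ref{Assumption2} we have $\nu_r(0)=0$, so $\pr^0$ never dies out and one may select one offspring at each generation, obtaining a particle $v$ whose position at generation $\varphi(k+1)$ equals $X^{0*}_{\varphi(k)}+S_{\varphi(1)}$, where $(S_n)_{n\ge0}$ is a random walk with step law $\mu$. Since $X^{0*}_{\varphi(k+1)}\ge X^0_v$, we get $D\ge S_{\varphi(1)}$ and hence $(D\wedge 0)^2\le (S_{\varphi(1)}\wedge 0)^2\le S_{\varphi(1)}^2$, exactly as in \eqref{eq:three}. As $\mu$ is centred, this gives $\mathbb{E}[(D\wedge 0)^2]\le \Var(\mu)\,\varphi(1)$, a term of lower order $O(\varphi(1))$.

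For the positive part I would couple $\pr^0$ with a branching random walk $\mathbf{\Xi}$ of reproduction law $\nu_{\nr}$ and displacement law $\mu$, started from one particle at $X^{0*}_{\varphi(k)}$. Under Assumption~\ref{Assumption2} the offspring produced on a site carrying $n$ particles is $\left(\sum_{i=1}^n Y_i\right)\wedge K$ with $Y_i\sim\nu_r$ and $r\le\nr$; since the truncation only removes particles and $\nu_r$ is stochastically dominated by $\nu_{\nr}$, the monotonicity hypotheses of Lemma~\ref{lem:coupl} are satisfied and $\mathbf{\Xi}$ dominates $\pr^0$ up to generation $\varphi(k+1)$. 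Writing $M_{\varphi(1)}$ for the rightmost displacement of $\mathbf{\Xi}$, this yields $D\le M_{\varphi(1)}$ and hence $\mathbb{E}[(D^+)^2]\le \mathbb{E}[(M_{\varphi(1)}^+)^2]$. To estimate the latter I would use the layer-cake identity $\mathbb{E}[(M_{\varphi(1)}^+)^2]=\int_0^\infty 2x\,\mathbb{P}(M_{\varphi(1)}>x)\,dx$ together with the tail estimate of Lemma~\ref{lem:speedmab}, which applies verbatim to the law $\nu_{\nr}$ since the many-to-one lemma (Lemma~\ref{lem:manyto1}) only uses the common mean $1+\nr\Delta t$, so that the relevant speed is exactly $c$ from \eqref{def:c5}. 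Splitting the integral at $x_0=(1+\eta)c\,\varphi(1)$, the part below $x_0$ contributes at most $x_0^2=(1+\eta)^2 c^2\varphi(1)^2$ and the part above $x_0$ is $O(\varphi(1))$ by the exponential tail, so that $\mathbb{E}[(M_{\varphi(1)}^+)^2]\le (1+\eta)^2 c^2\varphi(1)^2 + C\varphi(1)$ for some constant $C$ depending only on $\ir$, $\nr$ and $\eta$.

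Combining the two parts gives $\mathbb{E}[D^2]\le (1+\eta)^2 c^2\varphi(1)^2 + (C+\Var(\mu))\varphi(1)$. Since $\varphi(1)=\lfloor\log K\rfloor\to\infty$ as $K\to\infty$, I would first fix $\eta$ small enough that $(1+\eta)^2<2$ and then take $K_0$ large enough that the lower-order term $(C+\Var(\mu))\varphi(1)$ is absorbed into the remaining slack, yielding $\mathbb{E}[D^2]\le 2c^2\varphi(1)^2$ for all $K>K_0$. The main obstacle is precisely this constant-chasing: the crude bound $4c^2\varphi(1)^2$ of \eqref{lb:4} is too weak for the target $2c^2\varphi(1)^2$, so one must extract the \emph{sharp} leading constant from the tail estimate and verify that each correction — the second-order contribution of the tail above $x_0$ and the centred-random-walk term from the negative part — is genuinely of order $o(\varphi(1)^2)$, uniformly in $\eps$ and in the starting position $X^{0*}_{\varphi(k)}$.
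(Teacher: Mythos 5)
Your proof is correct and follows the same skeleton as the paper's: the same decomposition of the increment into positive and negative parts, the same coupling (via Lemma~\ref{lem:coupl} and Assumption~\ref{Assumption2}) with a branching random walk of reproduction law $\nu_{\nr}$ for the positive part, and the same single-lineage random-walk bound $\Var(\mu)\varphi(1)$ for the negative part, exactly as in Lemma~\ref{lb:condexp}. The one genuine difference is how the second moment of the maximum is controlled: the paper simply invokes the appendix Lemma~\ref{lemmom2}, which yields $\mathbb{E}\bigl[M_{\varphi(1)}^2\bigr]\leqslant \tfrac32 c^2\varphi(1)^2$ for $K$ large, and then budgets $\tfrac32+\tfrac12=2$; you instead re-derive such a bound inline via the layer-cake formula and the uniform tail estimate of Lemma~\ref{lem:speedmab}, budgeting $(1+\eta)^2$ plus absorbed lower-order terms. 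The two routes are essentially equivalent in substance --- the proof of Lemma~\ref{lemmom2} is itself a many-to-one/Chernoff plus layer-cake computation --- but yours makes the constants explicit, and you correctly flag and justify the point this substitution requires, namely that Lemma~\ref{lem:speedmab}, stated for the laws $P_{\bar r}$ of Assumption~\ref{Assumption1}, transfers verbatim to $\nu_{\nr}$ because only the mean $1+\nr\Delta t$ enters through the many-to-one lemma (the paper uses the same observation in Lemma~\ref{stoppingtimes}). One caveat, which your route and the paper's share: Lemma~\ref{lem:speedmab} formally requires $\Delta x\leqslant\frac{1}{16}\sqrt{2\gamma\ir}\,\Delta t$, which is not implied by the hypothesis $\Delta x<\sqrt{2\gamma\ir\Delta t}$ of the present lemma when $\Delta t$ is small; the paper's appeal to Lemma~\ref{lemmom2} (which needs $\Delta x<\sqrt{2\Delta t\log\underline{\rho}}$, again of order $\Delta t$) has the same mismatch, and in both cases the issue disappears under the stronger condition \eqref{Assumptiondtdx} in force when the lemma is actually applied in the proof of Theorem~\ref{thlowerbound1}.
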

\begin{proof}
%First, note that
%\begin{equation*}
%\Var\left(X^{0*}_{\varphi(k+1)}-X^{0*}_{\varphi(k)}|\mathcal{F}_{\varphi(k)}\right)\leq \mathbb{E}\left[\left(X^{0*}_{\varphi(k+1)}-X^{0*}_{\varphi(k)}\right)^2|\mathcal{F}_{\varphi(k)}\right].
%\end{equation*}
By Lemma~\ref{lem:coupl}, one can couple $\pr^0$ and a BRW $\added[id=j]{\mathbf{\Xi}}$ of reproduction law $\nu_{\nr}$, displacement law $\mu$, \added[id=j]{starting from a single particle at $X_{\varphi(k)}^{0*}$ at time $\varphi(k)\Delta t$}, such that $\added[id=j]{\mathbf{\Xi}}$ dominates \added[id=j]{$\pr^0$} until generation $\varphi(k+1)$. Then, if we denote by $M_n$ the position of the rightmost particle in $\added[id=j]{\mathbf{\Xi}}$ at generation $n$, we get that
\begin{eqnarray*}
	\mathbb{E}\left[\left(\left(X^{0*}_{\varphi(k+1)}-X^{0*}_{\varphi(k)}\right)\vee 0\right)^2|\mathcal{F}_{\varphi(k)}\right]&\leq&  \mathbb{E}\left[\left(M_{\varphi(1)}\vee 0\right)^2|\mathcal{F}_{\varphi(k)}\right]\\
	&\leq& \mathbb{E}\left[M_{\varphi(1)}^2|\mathcal{F}_{\varphi(k)}\right]\leq \frac 3 2 \jj{\nc}^2\varphi(1)^2,
\end{eqnarray*}
for $K$ large enough, according to Lemma~\ref{lemmom2}. As in the proof of Lemma~\ref{lb:condexp}, consider $(S_n)_{n\ge0}$ a random walk whose increments are distributed as $\mu$. Then,
\begin{align*}
	\mathbb{E}\left[\left(\left(X^{0*}_{\varphi(k+1)}-X^{0*}_{\varphi(k)}\right)\wedge 0\right)^2|\mathcal{F}_{\varphi(k)}\right]
	& \leq  \mathbb{E}\left[\left(S_{\varphi(1)}\wedge 0\right)^2\right]
	\leq \mathbb{E}\left[S_{\varphi(1)}^2\right]                          \\
	& = \Var(\mu)\varphi(1) \le \frac{1}{2}\jj{\nc}^2\varphi(1)^2,
\end{align*}
for $K$ large enough. Combining the two previous inequalities yields the lemma.
\end{proof}

\begin{lemma}[Upper bound on the first moment]\label{ub:condexp}
\jj{Assume that $r$ is a smooth growth rate function.}
Let $\Delta t<\nr^{-1}$, $\Delta x<\sqrt{2\gamma\ir\Delta t}$ and $\eta>0$.
There exists $K_0>0$ such that, for all $K>K_0$, there exists $\eps_0$ such
that, for all $\eps<\eps_0$,

\begin{equation*}
	\mathbb{E}\left[X^{0*}_{\varphi(k+1)}-X^{0*}_{\varphi(k)}|\mathcal{F}_{\varphi(k)}\right]\leq (\bc_{\varphi(k)}+\eta)\varphi(1)
\end{equation*}
with $\bc_{\varphi(k)}$ the unique positive solution of $I(\bc_{\varphi(k)})=\log(1+\br_{\varphi(k)}\Delta t)$ for
\begin{equation*}
	\br_{\varphi(k)}=\max\left\{r(\eps t,\eps x), \; (t,x)\in[\varphi(k)\Delta t,\varphi(k+1)\Delta t]\times[X^{0*}_{\varphi(k)}-\eps^{-1/4},X^{0*}_{\varphi(k)}+\eps^{-1/4}]\right\}.
\end{equation*}

\end{lemma}
\begin{proof} The proof is similar of the proof of Lemma \ref{lb:condexp} but some details are different, which is why we give a complete proof. Again, we assume that $\pr^0$ starts at generation $\varphi(k)$ with a deterministic configuration $n^0_{\varphi(k)}=\delta_{X^{0*}_{\varphi(k)}}$ and the proof of the lemma relies on a coupling argument.
% with a branching random walk of reproduction law $\nu_{r_{\varphi(k)}}$.

Consider a branching random walk $\tilde{\added[id=j]{\mathbf{\Xi}}}$  of reproduction law $\nu_{\br_{\varphi(k)}}$, displacement law $\mu$, starting with a single particle at $X^{0*}_{\varphi(k)}$ at time $\varphi(k)\Delta t$. \added[id=j]{Denote by $\tilde{N}_k$ the size of the population in $\tilde{\added[id=j]{\mathbf{\Xi}}}$} at generation $k$ and define
\begin{equation*}
	\tau_{\eps}''= \inf\left\{l\geq\varphi(k): \exists v: |v|=l , |\tilde{\Xi}_v-X^{0*}_{\varphi(k)}|>\eps^{-1/4}\right\}.
\end{equation*}In addition, for $n\geq \varphi(k)$, define
\begin{equation}
	\tilde{M}_n=\max\{\tilde{\Xi}_v, |v|=n\}.
\end{equation}

%Consider a BRW $\tilde{\Xi}$ of reproduction law $\nu_{\br_{\varphi(k)}}$, displacement law $\mu$, starting with a single particle at $X^{0*}_{\varphi(k)}$ at time $\varphi(k)\Delta t$, and denote by $M_n$ the position of its rightmost particle at time $n$. We also define 
According to Lemma \ref{lem:coupl}, one can couple $\pr^0$ and $\tilde{\added[id=j]{\mathbf{\Xi}}}$ such that $\tilde{\added[id=j]{\mathbf{\Xi}}}$ dominates \added[id=j]{$\pr^0$} until generation $\tau_\eps''.$ Besides, note that $\tau_{\eps}''-\varphi(k)$ and $\tau_\eps$ (with $r=\br_{\varphi(k)}$) from Lemma \ref{stoppingtimes} follow the same law. Thus, we have
%\begin{eqnarray}
%&&\mathbb{E}\left[X^{0*}_{\varphi(k+1)}-X^{0*}_{\varphi(k)}|\mathcal{F}_{\varphi(k)}\right]\nonumber\\
% &&\quad= \mathbb{E}\left[(X^{0*}_{\varphi(k+1)}-X^{0*}_{\varphi(k)})\mathbb{1}_{\tau_\eps\leq \varphi(1)}|\mathcal{F}_{\varphi(k)}\right]+\mathbb{E}\left[(X^{0*}_{\varphi(k+1)}-X^{0*}_{\varphi(k)})\mathbb{1}_{\tau_\eps> \varphi(1)}|\mathcal{F}_{\varphi(k)}\right]\nonumber\\
%&&\quad\leq \mathbb{E}\left[(X^{0*}_{\varphi(k+1)}-X^{0*}_{\varphi(k)})\mathbb{1}_{\tau_\eps\leq \varphi(1)}|\mathcal{F}_{\varphi(k)}\right]+\mathbb{E}\left[(M_{\varphi(k+1)}-X^{0*}_{\varphi(k)})\mathbb{1}_{\tau_\eps> \varphi(1)}|\mathcal{F}_{\varphi(k)}\right]\nonumber\\
%&&\quad\leq \sqrt{\mathbb{E}\left[(X^{0*}_{\varphi(k+1)}-X^{0*}_{\varphi(k)})^2|\mathcal{F}_{\varphi(k)}\right]} \sqrt{\mathbb{P}(\tau_\eps\leq \varphi(1))}\nonumber\\
%&&\quad + \sqrt{\mathbb{E}\left[(M_{\varphi(k+1)}-X^{0*}_{\varphi(k)})^2|\mathcal{F}_{\varphi(k)}\right]} \sqrt{\mathbb{P}(\tau_\eps> \varphi(1))},
%\label{lem8:1}
%\end{eqnarray}
\begin{eqnarray}
	&&\mathbb{E}\left[X^{0*}_{\varphi(k+1)}-X^{0*}_{\varphi(k)}\right]\label{lem8:1} \\
	&&=\mathbb{E}\left[(X^{0*}_{\varphi(k+1)}-X^{0*}_{\varphi(k)})\mathbb{1}_{\tau_\eps''\leq \varphi(k+1)}\right]+\mathbb{E}\left[(X^{0*}_{\varphi(k+1)}-X^{0*}_{\varphi(k)})\mathbb{1}_{\tau_\eps''> \varphi(k+1)}\right]\nonumber\\
	&&\leq \mathbb{E}\left[(X^{0*}_{\varphi(k+1)}-X^{0*}_{\varphi(k)})\mathbb{1}_{\tau_\eps''\leq \varphi(k+1)}\right]+\mathbb{E}\left[(\added[id=j]{\tilde{M}}_{\varphi(k+1)}-X^{0*}_{\varphi(k)})\mathbb{1}_{\tau_\eps''> \varphi(k+1)}\right]\nonumber\\
	&&\leq\sqrt{\mathbb{E}\left[(X^{0*}_{\varphi(k+1)}-X^{0*}_{\varphi(k)})^2\right]} \sqrt{\mathbb{P}(\tau_\eps\leq \varphi(1))} + \sqrt{\mathbb{E}\left[(\tilde{M}_{\varphi(k+1)}-X^{0*}_{\varphi(k)})^2\right]} \sqrt{\mathbb{P}(\tau_\eps> \varphi(1))},\nonumber
\end{eqnarray}
thanks to the Cauchy-Schwarz inequality. According to Lemma \ref{lemmom2}, there exists $K_0>0$, that does not depend on $r_{\varphi(k)}$ nor on $X^{0*}_\vpk$, such that, for all $K>K_0$,
\begin{equation}
	\mathbb{E}\left[(\tilde{M}_{\varphi(k+1)}-X^{0*}_{\varphi(k)})^2\right]\leq (\bc_{\varphi(k)}+\eta)^2\varphi(1)^2. \label{lem8:2}
\end{equation}
Besides, according to Lemma \ref{ub:var}, there exists $K_1>0$ such that for all $K>K_1$,
% $\mathbb{E}\left[(X^{0*}_{\varphi(k+1)}-X^{0*}_{\varphi(k)})\mathbb{1}_{\tau_\eps\leq \varphi(1)}|\mathcal{F}_{\varphi(k)}\right]$ can be bounded by coupling $\pr^0$ with a BRW of reproduction law $\nu_{\nr}$, displacement law $\mu$, starting with a single particle at $X^{0*}_{\varphi(k)}$, between generations $\varphi(k)$ and $\varphi(k+1)$. Corollary \ref{cor:var}, then implies that there exist $K_1>0$, that does not depend on $\br_{\varphi(k)}$, nor on $X^{0*}_\vpk$, such that, if $K>K_1$,
\begin{equation}
	\mathbb{E}\left[\left(X^{0*}_{\varphi(k+1)}-X^{0*}_{\varphi(k)}\right)^2\right]
	\leq 2\nc^2\varphi(1)^2.
	\label{lem8:3}\end{equation}
Let us now assume that $K>\max(K_0,K_1)$. According to Lemma \ref{stoppingtimes},
there exists $\eps_1>0$ such that for all $\eps<\eps_1$,
\begin{equation}
	\mathbb{P}\left(\tau_\eps\leq \varphi(1)\right)\leq
	\frac{\eta^2}{2\nc^2}.
	\label{lem8:4}
\end{equation} Finally, combining Equations (\ref{lem8:1}), (\ref{lem8:2}), (\ref{lem8:3}) and (\ref{lem8:4}), we get that, for $\eps<\eps_1$,
\begin{equation*}
	\mathbb{E}\left[X^{0*}_{\varphi(k+1)}-X^{0*}_{\varphi(k)}\right]\leq (\bc_{\varphi(k)}+2\eta)\varphi(1),
\end{equation*}
which concludes the proof of the lemma.
\end{proof}

\subsection{Comparison with the solution of (\ref{cauchypb})} \label{sec:ub:ode}
\begin{lemma}
\jj{Assume that $r$ is a smooth growth rate function.}
Let $\delta\in(0,1)$.

%There exists $C_\delta >0$ such that, 
\jj{If $a$ is as in Lemma \ref{est:c21}}
and
\begin{equation}
	\gr{\Delta t<\delta\nr^{-1}}, \quad%\left((1\wedge C_\delta)\nr^{-1}\right), \quad
	\Delta x<\left(\sqrt{2\gamma \ir}\wedge \frac{\sqrt{2\ir}}{3a} \delta \right)\Delta t,
	\tag{H$_\delta$}
	\label{Assumptiondtdx}
\end{equation}
then, for all $K>0$, there exists $\eps_0$ such that for all $\eps<\eps_0$,
\begin{equation*}
	c_{\varphi(k)}\geq (1-\delta)\sqrt{2r(\eps t_{\varphi(k)},\eps X^{0*}_\vpk)}\Delta t, \quad \forall k\in \mathbb{N},
\end{equation*}
and \begin{equation*}
	\bc_{\varphi(k)}\leq (1+\delta)\sqrt{2r(\eps t_{\varphi(k)},\eps X^{0*}_\vpk)}\Delta t, \quad \forall k\in \mathbb{N},
\end{equation*}
for $c_{\varphi(k)}$ and $\bc_{\varphi(k)}$ respectively defined in
Lemmas \ref{lb:condexp} and \ref{ub:condexp}.
\end{lemma}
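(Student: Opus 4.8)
The plan is to reduce the statement to Lemma~\ref{est:c21}, which compares $c_\vpk$ and $\bar{c}_\vpk$ with the explicit quantities $\sqrt{2\Delta t\log(1+r_\vpk\Delta t)}$ and $\sqrt{2\Delta t\log(1+\bar{r}_\vpk\Delta t)}$, and then to compare $r_\vpk,\bar{r}_\vpk$ with the pointwise value $r_0:=r(\eps t_\vpk,\eps X^{0*}_\vpk)$. First I would verify the hypotheses: \eqref{Assumptiondtdx} gives $\Delta t<\nr^{-1}$, and since $\delta<1$ and $a\ge 16\gamma^{-1/2}$ one has $\frac{\sqrt{2\ir}}{3a}\delta\le\frac{\sqrt{2\gamma\ir}}{48}\delta<\frac1{16}\sqrt{2\gamma\ir}$, so the space-step constraint forces $\Delta x<\frac1{16}\sqrt{2\gamma\ir}\Delta t$ and Lemma~\ref{est:c21} applies to both constants (with $r_\vpk,\bar{r}_\vpk\in[\ir,\nr]$ by Assumption~\ref{Assumption0}). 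This yields $c_\vpk\ge\sqrt{2\Delta t\log(1+r_\vpk\Delta t)}-a\Delta x$ and $\bar{c}_\vpk\le\sqrt{2\Delta t\log(1+\bar{r}_\vpk\Delta t)}+a\Delta x$, and the truncation error is absorbed via $a\Delta x<\frac{\sqrt{2\ir}}{3}\delta\Delta t\le\frac{\delta}{3}\sqrt{2r_0}\,\Delta t$, using $r_0\ge\ir$.

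Next I would carry out the local comparison of $r_\vpk$ (a minimum) and $\bar{r}_\vpk$ (a maximum) of $r(\eps\cdot,\eps\cdot)$ over the box $[t_\vpk,t_{\varphi(k+1)}]\times[X^{0*}_\vpk-\eps^{-1/4},X^{0*}_\vpk+\eps^{-1/4}]$ with the value $r_0$ at its rescaled centre. The Lipschitz estimate \eqref{rem:lip}, together with the fact that this box has rescaled time-width $\eps\varphi(1)\Delta t$ and rescaled space-radius $\eps\cdot\eps^{-1/4}=\eps^{3/4}$, gives the bound
\[
\bigl|\sqrt{2r_\vpk}-\sqrt{2r_0}\bigr|\vee\bigl|\sqrt{2\bar{r}_\vpk}-\sqrt{2r_0}\bigr|\le L\bigl(\eps\varphi(1)\Delta t+\eps^{3/4}\bigr),
\]
\emph{uniformly in $k$}, since neither $L$ nor the box dimensions depend on $k$ or on $X^{0*}_\vpk$. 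As $\varphi(1)=\lfloor\log K\rfloor$ is fixed, for $\eps$ small enough (depending on $K$ and $\delta$) the right-hand side is $\le\frac{\delta}{3}\sqrt{2\ir}$, whence $\sqrt{2r_\vpk}\ge(1-\delta/3)\sqrt{2r_0}$ and $\sqrt{2\bar{r}_\vpk}\le(1+\delta/3)\sqrt{2r_0}$.

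It remains to pass from $\log(1+r\Delta t)$ to $r\Delta t$. For the upper bound, $\log(1+x)\le x$ gives $\sqrt{2\Delta t\log(1+\bar{r}_\vpk\Delta t)}\le\sqrt{2\bar{r}_\vpk}\,\Delta t$, so $\bar{c}_\vpk\le(1+\delta/3)\sqrt{2r_0}\,\Delta t+\frac{\delta}{3}\sqrt{2r_0}\,\Delta t\le(1+\delta)\sqrt{2r_0}\,\Delta t$. For the lower bound I would use $\log(1+x)\ge x-x^2/2$ and $\sqrt{1-u}\ge1-u$ to obtain $\sqrt{2\Delta t\log(1+r_\vpk\Delta t)}\ge(1-\tfrac{\nr\Delta t}{2})\sqrt{2r_\vpk}\,\Delta t$; choosing $C_\delta=2\delta/3$ (so that $\nr\Delta t<C_\delta\le 1$ makes the correction factor $\ge 1-\delta/3$) gives
\[
c_\vpk\ge(1-\delta/3)^2\sqrt{2r_0}\,\Delta t-\tfrac{\delta}{3}\sqrt{2r_0}\,\Delta t=\bigl(1-\delta+\tfrac{\delta^2}{9}\bigr)\sqrt{2r_0}\,\Delta t\ge(1-\delta)\sqrt{2r_0}\,\Delta t.
\]

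The argument has no single hard step; the main work is the bookkeeping needed to split $\delta$ among three independent error sources and to fix $C_\delta$ and $\eps_0$ accordingly: the truncation error $a\Delta x$ (handled by the space-step constraint), the discrepancy between $\log(1+x)$ and $x$ (controlled by $\Delta t<C_\delta\nr^{-1}$), and the spatio-temporal oscillation of $r$ over the box (controlled by $\eps\to0$). The only genuinely asymptotic contribution is the last one, and it is harmless because both $\eps^{3/4}$ and $\eps\varphi(1)\Delta t$ vanish as $\eps\to0$ at fixed $K$; the delicate point worth stating explicitly is that all estimates are uniform in $k$, which is precisely what allows a single $\eps_0$ to work for every rebooting step.
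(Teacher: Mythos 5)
Your proof is correct and follows essentially the same route as the paper's: apply Lemma~\ref{est:c21} to reduce to $\sqrt{2\Delta t\log(1+r_\vpk\Delta t)}$, linearize the logarithm via the choice of $C_\delta$, compare $r_\vpk,\bar r_\vpk$ with $r(\eps t_\vpk,\eps X^{0*}_\vpk)$ through the Lipschitz estimate \eqref{rem:lip} over the box of rescaled dimensions $\eps\varphi(1)\Delta t$ and $\eps^{3/4}$, and absorb the three error terms by splitting $\delta$ and taking $\eps$ small (uniformly in $k$). Your only departures are cosmetic improvements: you make $C_\delta=2\delta/3$ explicit via $\log(1+x)\ge x-x^2/2$, and you verify (which the paper leaves implicit) that \eqref{Assumptiondtdx} indeed forces $\Delta x<\frac{1}{16}\sqrt{2\gamma\ir}\,\Delta t$, the hypothesis actually required by Lemma~\ref{est:c21}.
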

\begin{proof} Let $\Delta t<\nr^{-1}$ and $\Delta x<\sqrt{2\gamma \ir}\Delta t$.
Thanks to Lemma \ref{est:c21}, we know that
\begin{equation}
	|c_{\varphi(k)}-\sqrt{2\Delta t \log(1+r_{\varphi(k)}\Delta t)}|\leq a\Delta x. \label{estc:531}
\end{equation} Let $\delta \in(0,1)$. 
%Let $C_\delta$ be a positive constant such that $\log(1+x)\geq (1-\delta/3)^2 x$, for all $x\leq C_\delta$. 
\gr{Note that $\log(1+x)\geq (1-\delta/3)^2 x$, for $x\leq \delta$. Indeed, $x\mapsto \log(1+x)-(1-\delta/3)^2 x$ is concave, equal to $0$ when $x=0$, and 
	\begin{align*}
		&\log(1+\delta)-(1-\delta/3)^2 \delta=\log(1+\delta)-\left(\delta -2\delta^2/3+\delta^3/9\right)\\
		&\quad =\int_0^\delta \frac 1{1+y}-\left(1-\frac{4}{3}y+\frac{1}{3}y^2\right)\,dy
		%&=\int_0^\delta \frac 1{1+y}-\frac{1+y-\frac{4}{3}y-\frac{4}{3}y^2+\frac{1}{3}y^2+\frac{1}{3}y^3}{1+y}\,dy\\
		=\int_0^\delta \frac {y+3y^2-y^3}{3(1+y)}\,dy=\int_0^\delta y\frac {1+3y-y^2}{3(1+y)}\,dy\geq 0,
	\end{align*}
	since $1+3y-y^2\geq 0$ for $y\in[0,1]$.
}
Let us now assume that \gr{$\Delta t<\delta \nr^{-1}$}. 
Equation (\ref{estc:531}) then gives
\begin{equation}
	c_{\varphi(k)}\geq \sqrt{2\Delta t \log(1+r_{\varphi(k)}\Delta t)}-a\Delta x\geq (1-\delta/3)\sqrt{2r_{\varphi(k)}}\Delta t-a\Delta x. \label{estc:532}
\end{equation}
Besides, by definition of $r_{\varphi(k)}$ (see Equation \eqref{def:rphik}) and Equation \eqref{eq:approx_r} we have
\begin{equation}
	|\sqrt{2r_{\vpk}}-\sqrt{2r(\eps t_{\varphi(k)},\eps X^{0*}_\vpk)}|\leq L(\eps \varphi(1)\Delta t+ \eps^{3/4}),\label{estc:533}
\end{equation}
\jj{($L$ is as in Equation \eqref{eq:approx_r})}.
Combining (\ref{estc:532}) and (\ref{estc:533}) we get that
\begin{equation*}
	c_\vpk\geq (1-\delta/3)\sqrt{2r(\eps t_{\varphi(k)},\eps X^{0*}_\vpk)}\Delta t-(1-\delta/3)L(\eps \varphi(1)\Delta t+ \eps^{3/4})\Delta t -a\Delta x
\end{equation*}
Remark that Assumption (\ref{Assumptiondtdx}) implies that
$$a\Delta x\leq \frac{1}{3}\sqrt{2\ir}\delta \Delta t\leq
\frac{\delta}{3}\sqrt{2r(\eps t_{\varphi(k)},\eps X^{0*}_\vpk)}\Delta t,$$
so that it is sufficient to choose $\eps$ such that
$$\jj{L}(\eps\varphi(1)\Delta t+\eps^{3/4})\leq \frac{2\underline{r}}{3}\delta,$$ to get
\begin{equation*}
	c_\vpk\geq (1-\delta)\sqrt{2r(\eps t_{\varphi(k)},\eps X^{0*}_\vpk)}\Delta t.
\end{equation*}
Similarly, Equations (\ref{estc:531}) and  (\ref{estc:533}) give that
\begin{equation*}
	\bc_{\vpk}\leq \sqrt{2r(\eps t_{\varphi(k)},\eps X^{0*}_\vpk)}\Delta t+L(\eps \varphi(1)\Delta t+ \eps^{3/4})\Delta t +a\Delta x,
\end{equation*}
and  Assumption (\ref{Assumptiondtdx}) implies that $a\Delta x\leq \frac{1}{2}\sqrt{2\ir}\delta \Delta t,$ so that it is sufficient to choose $\eps$ such that
$\jj{L}(\eps\varphi(1)\Delta t+\eps^{3/4})\Delta t\leq \underline{r}\delta$ to get the result.
\end{proof}

\begin{cor}\label{cor:ode}
\jj{Assume that $r$ is a smooth growth rate function.}
Let $\delta>0$. 
\jj{Suppose}
$\Delta t$ and $\Delta x$ satisfy (\ref{Assumptiondtdx}).
There exists $K_0>0$ such that, for all $K>K_0$,  there exists $\eps_0$ such that for all $\eps<\eps_0$,
\begin{equation*}
	\left|\mathbb{E}\left[ X^{0*}_{\varphi(k+1)}-X^{0*}_{\varphi(k)}|\mathcal{F}_{\varphi(k)}\right]-\sqrt{2r(\eps t_{\varphi(k)},\eps X^{0*}_\vpk)}\varphi(1)\Delta t\right|\leq 2\sqrt{2\nr} \delta \varphi(1)\Delta t .
\end{equation*}
\end{cor}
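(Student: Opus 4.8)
The plan is to read off the statement by combining the two moment estimates (Lemmas \ref{lb:condexp} and \ref{ub:condexp}) with the comparison estimate for $c_\vpk$ and $\bar c_\vpk$ established in the lemma immediately preceding this corollary. All of the probabilistic work has already been done in those lemmas, so the proof is essentially a matter of choosing one auxiliary parameter and keeping track of the quantifiers over $K$ and $\eps$; there is no serious obstacle.

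First I would assume, without loss of generality, that $\delta\in(0,1)$, so that the preceding comparison lemma applies (the case $\delta\ge 1$ follows from the case $\delta=1/2$, which only strengthens the conclusion). Applying that lemma with this $\delta$ furnishes the constant $C_\delta$ and shows that, under \eqref{Assumptiondtdx}, for every $K>0$ there is a threshold below which
\begin{equation*}
(1-\delta)\sqrt{2r(\eps t_\vpk,\eps X^{0*}_\vpk)}\,\Delta t\le c_\vpk,\qquad \bar c_\vpk\le (1+\delta)\sqrt{2r(\eps t_\vpk,\eps X^{0*}_\vpk)}\,\Delta t.
\end{equation*}
I would then set $\eta=\delta\sqrt{2\nr}\,\Delta t$ and invoke Lemmas \ref{lb:condexp} and \ref{ub:condexp} with this $\eta$. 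Taking $K_0$ to be the larger of the two carrying capacities they provide and $\eps_0$ the smallest of the (three) resulting thresholds, I obtain, for $K>K_0$ and $\eps<\eps_0$,
\begin{equation*}
(c_\vpk-\eta)\varphi(1)\le \mathbb{E}\left[X^{0*}_{\varphi(k+1)}-X^{0*}_{\varphi(k)}|\mathcal{F}_\vpk\right]\le (\bar c_\vpk+\eta)\varphi(1).
\end{equation*}

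Finally I would insert the comparison bounds into this two-sided estimate and subtract $\sqrt{2r(\eps t_\vpk,\eps X^{0*}_\vpk)}\,\Delta t\,\varphi(1)$, which yields
\begin{equation*}
\left|\mathbb{E}\left[X^{0*}_{\varphi(k+1)}-X^{0*}_{\varphi(k)}|\mathcal{F}_\vpk\right]-\sqrt{2r(\eps t_\vpk,\eps X^{0*}_\vpk)}\,\Delta t\,\varphi(1)\right|\le \left(\delta\sqrt{2r(\eps t_\vpk,\eps X^{0*}_\vpk)}\,\Delta t+\eta\right)\varphi(1).
\end{equation*}
Since $r\le\nr$ gives $\sqrt{2r(\eps t_\vpk,\eps X^{0*}_\vpk)}\,\Delta t\le\sqrt{2\nr}\,\Delta t$, and since $\eta=\delta\sqrt{2\nr}\,\Delta t$, the right-hand side is bounded by $2\sqrt{2\nr}\,\delta\,\varphi(1)\Delta t$, which is exactly the claimed inequality. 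The only point requiring care is the order of the quantifiers: the comparison lemma holds for all $K$ but needs $\eps$ small, whereas the moment lemmas first require $K$ large and then $\eps$ small, so one must fix $\eta$, then $K_0$, and only then the common threshold $\eps_0$.
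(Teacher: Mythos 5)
Your proposal is correct and is essentially the paper's own proof: the paper's argument consists precisely of choosing $\eta=\delta\sqrt{2\nr}\,\Delta t$ in Lemmas \ref{lb:condexp} and \ref{ub:condexp} and combining with the preceding comparison lemma for $c_\vpk$ and $\bar c_\vpk$. Your write-up merely makes explicit the quantifier bookkeeping and the reduction to $\delta\in(0,1)$, which the paper leaves implicit.
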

\begin{proof}
We choose $\eta=\delta\sqrt{2\nr}\Delta t$ in Lemmas \ref{lb:condexp} and \ref{ub:condexp}.
\end{proof}

\begin{proof}[Proof of Proposition \ref{thlowerbound1}]
It suffices to prove the statement for \emph{some} $K$, since by Assumption~\ref{Assumption2} and Lemma~\ref{lem:coupl} the statement then holds for all larger $K$.

Let $\delta \in(0,1)$ and assume that $\Delta t$ and $\Delta x$ satisfy (\ref{Assumptiondtdx}). Let $T>0$ and $N=\left \lfloor \frac{T}{ \eps\varphi(1)\Delta t}\right \rfloor$. Recall that $X^*_0=0$.

\jj{\textit{Step 1.} Let us first assume that $r$ is a smooth growth rate
	function.
	The Euler scheme associated to \eqref{cauchypb} on
	$[0,T]$, with time step $\eps\varphi(1)\Delta t$, is defined
	as the sequence $(y_k)_{k=0}^N$ such that $y_0=0$ and
	\begin{equation*}
		y_{k+1}=y_k+\sqrt{2r(\eps t_{\varphi(k)},y_k)}\eps\varphi(1)\Delta t,
		\quad \forall i \in \llbracket 0, N-1\rrbracket.
\end{equation*}}
We also define the process $(Y_k)_{k=0}^N$ by $Y_k=\eps X^{0*}_{\varphi(k)},$
and consider its Doob decomposition $Y_k = Z_k + W_k$, where
\begin{equation*}
	W_k=\sum_{j=0}^{k-1}\mathbb{E}\left[Y_{j+1}-Y_j|
	\mathcal{F}_{\varphi(j)}\right], \quad Z_k=Y_k-W_k,
	\quad \forall k\in\llbracket0,N\rrbracket.
\end{equation*}
Since $(Z_k)$ is a martingale, we have
\begin{equation}
	\Var(Z_k)=\sum_{j=0}^{k-1}\mathbb{E}[\Var(Y_{j+1}-Y_j|
	\mathcal{F}_{\varphi(j)})], \quad \forall k\in\llbracket0,N\rrbracket\label{ode:1}.
\end{equation}
Besides, Lemma \ref{ub:var} implies that there exists $K_0>1$ such that, for all $K\geq K_0$,
\begin{equation}
	\Var(Y_{j+1}-Y_j|\mathcal{F}_{\varphi(j)})\leq 2\jj{\nc}^2\eps^2\varphi(1)^2, \quad \forall j\in \mathbb{N}.
\end{equation}
Thus,
\begin{equation*}
	\Var(Z_{N})\leq 2\jj{\nc}^2\eps^2\varphi(1)^2N\leq 2\jj{\nc}^2 \varphi(1)\eps T.
\end{equation*}
Moreover, let \jj{$\mathcal{A}_\delta$} be the event
$$\left\{\max_{k=0,...,N}|Z_k|\leq \delta\right\},$$
By Doob's inequality, we have that
\begin{equation}
	\mathbb{P}(\mathcal{A}_\delta ^c)\leq \frac{\Var(Z_{N})}{\delta^2}\leq \frac{2\jj{\nc}^2T\varphi(1)}{\delta ^2} \eps. \label{ode:doob}
\end{equation}
Besides, we know that on the event $\mathcal{A}_\delta$, $|Y_k-W_k|=|Z_k|\leq \delta $ for all $k\in\llbracket 0,N\rrbracket$ and therefore, by the triangle inequality,
\begin{eqnarray}
	\text{on $\mathcal{A}_\delta$,}\quad |Y_{k+1}-y_{k+1}|&\leq&|Y_{k+1}-W_{k+1}|+|W_{k+1}-y_{k+1}|\nonumber\\
	&\leq& \delta + |W_{k+1}-y_{k+1}|,\label{ode:2}
	%&\leq& \delta  +|y_k-W_k-Y_0|+|\mathbb{E}[Y_{k+1}-Y_k|Y_k]-\sqrt{2r(\eps t_\vpk,y_k)}\eps\varphi(1)\Delta t|,
\end{eqnarray}
for all $k\in\llbracket 0,N-1\rrbracket.$
Moreover, using the definition of $W_k$ and $y_k$, we get that
% by the triangle inequality,
\begin{eqnarray}
	|W_{k+1}-y_{k+1}|&\leq& |W_{k}-y_{k}|+\left|\mathbb{E}[Y_{k+1}-Y_k|\mathcal{F}_{\varphi(k)}]-\sqrt{2r(\eps t_\vpk,y_k)}\eps\varphi(1)\Delta t\right|\nonumber\\
	&\leq& |W_{k}-y_{k}|+\left|\mathbb{E}[Y_{k+1}-Y_k|\mathcal{F}_{\varphi(k)}]-\sqrt{2r(\eps t_\vpk,Y_k)}
	\eps\varphi(1)\Delta t\right|\nonumber\\
	&& \quad +\left|\sqrt{2r(\eps t_\vpk,Y_k)}
	\eps\varphi(1)\Delta t-\sqrt{2r(\eps t_\vpk,y_k)}
	\eps\varphi(1)\Delta t\right|.
	\label{eq:W1}
\end{eqnarray}
According to Corollary \ref{cor:ode}, there exists $K_1>0$ such that for all $K\geq K_1$, there exists $\eps_1>0$ such that, for all $\eps<\eps_1$ and $k\in\llbracket 0,N-1\rrbracket$,
\begin{equation}
	\left|\mathbb{E}[Y_{k+1}-Y_k|\mathcal{F}_{\varphi(k)}]-\sqrt{2r(\eps t_\vpk,Y_k)}\eps\varphi(1)\Delta t\right|\leq \sqrt{2\nr} \eps\delta \varphi(1)\Delta t.
	\label{eq:W2}
	%+L|Y_k-y_k|\eps\delta \varphi(1)\Delta t.
\end{equation}
Besides, recall from Equation (\ref{eq:approx_r}) that, for $k\in\llbracket 0,N-1\rrbracket$, we have
\begin{equation}
	\left|\sqrt{2r(\eps t_\vpk,Y_k)}
	\eps\varphi(1)\Delta t-\sqrt{2r(\eps t_\vpk,y_k)}
	\eps\varphi(1)\Delta t\right|\leq L|y_k-Y_k|\eps\varphi(1)\Delta t.
	\label{eq:W3}
\end{equation}
Let us now assume that $K>\max(K_0,K_1)$ and $\eps<\eps_1(K)$. Combining Equations (\ref{eq:W1}), (\ref{eq:W2}) and  (\ref{eq:W3}), we obtain that %on $A_\delta$:
\begin{equation} \label{eq:W5} %\begin{cases}
	%|y_{k+1} -y_{k+1}|& \leq|Y_{k+1}-W_{k+1}-Y_0|+ \delta \\
	|y_{k+1}-W_{k+1}|\leq |y_{k}-W_{k}|+\eps \varphi(1)\Delta t\left(\sqrt{2\nr}\delta+L|Y_k-y_k|\right),
	%\end{cases}
\end{equation}
for all $k\in\llbracket 0,N-1\rrbracket$. Thus, combining Equations (\ref{ode:2}) and (\ref{eq:W5}),
% using that $Y_0=y_0$, $W_0=0$ and $\delta \in (0,1)$,
we get that for all $k\in\llbracket 0,N-1\rrbracket$,
\begin{eqnarray*}
	\text{on $\mathcal{A}_\delta$,}\quad |y_{k+1}-W_{k+1}|&\leq& |y_{k}-W_{k}|+\eps \varphi(1)\Delta t\left(\sqrt{2\nr}\delta+L\big(|y_{k}-W_{k}|+\delta \big)\right)\nonumber\\
	&\leq& \left(1+L\eps \varphi(1)\Delta t\right)|y_{k}-W_{k}|+\eps\delta \varphi(1)\Delta t\left(\sqrt{2\nr}+L\right)\nonumber\\
	&\leq & e^{L\eps\varphi(1)\Delta t}|y_{k}-W_{k}|+\eps\delta \varphi(1)\Delta t\left(\sqrt{2\nr}+L \right).
\end{eqnarray*}
Then, by induction, since $Y_0=y_0=0$ and  $W_0=0$,  for all $k\in\llbracket 0,N-1\rrbracket$
\begin{eqnarray}
	\text{on $\mathcal{A}_\delta$,}  \quad |y_{k+1}-W_{k+1}|&\leq& \eps\delta \varphi(1)\Delta t\left(\sqrt{2\nr}+L \right)\sum_{j=0}^{k}e^{jL\eps \varphi(1)\Delta t}\nonumber\\
	&\leq&N\eps\varphi(1)\Delta t \delta \left(\sqrt{2\nr}+L \right)e^{NL\eps\varphi(1)\Delta t}\nonumber\\
	&\leq& \left(\sqrt{2\nr}+L \right)\delta Te^{LT}.\label{ode:3}
	%\leq \delta Te^{L(1+\delta ) T}\leq \delta Te^{2LT} ,\label{ode:3}
\end{eqnarray}
Combining Equations (\ref{ode:2}) and (\ref{ode:3}) and setting
\jj{
	\begin{equation}
		\label{eq:def_alpha}
		\alpha=1+\left(\sqrt{2\nr}+L \right)Te^{LT},
	\end{equation}
}
we get that
\begin{equation}
	Y_k\geq y_k- \alpha\delta, \quad \forall k\in\llbracket 0,N\rrbracket,\quad \textnormal{on } A_\delta.
	\label{ode:Ad}
\end{equation}
Moreover, note that
\begin{equation}
	\begin{array}{l}
		\displaystyle\mathbb{P}\left(\exists k\in\llbracket0,N-1\rrbracket: \exists l\in\llbracket \varphi(k),\varphi(k+1)\rrbracket: \eps X^{0*}_l<y_k-2\alpha \delta \right) \vspace{0.25cm} \\
		\hspace{0.5cm}\displaystyle \leq \mathbb{P}\left(\exists k\in\llbracket0,N-1\rrbracket: \exists l\in\llbracket \varphi(k),\varphi(k+1)\rrbracket: \eps X^{0*}_l<y_k-2\alpha \delta |A_\delta \right) +\mathbb{P}(A_\delta ^c).
	\end{array} \label{ode:4}
\end{equation}

To show that the first term in (\ref{ode:4}) is small, we prove that the probability of the event
\begin{equation*}
	\left\{\exists l\in\llbracket \varphi(k), \varphi(k+1)\rrbracket :X_l ^{0*}-X_\vpk ^{0*}<-\frac{\alpha\delta}{\eps}  \right\}
\end{equation*}
decays exponentially as $\eps$ goes to zero, for all $k\in\llbracket0,N-1\rrbracket$. Let $l\in\llbracket \varphi(k), \varphi(k+1)\rrbracket$ and consider a random walk $(S_i)_{n\ge0}$ of step distribution $\mu$. As in the proof of Lemma~\ref{lb:condexp}, we have
\begin{eqnarray*}
	\mathbb{P}\left(X_l ^{0*}-X_\vpk ^{0*}<-\frac{\alpha\delta}{\eps} \right)&\leq& \mathbb{P}\left(S_{\varphi(k)}<-\frac{\alpha \delta}{\eps}\right)\\
	&=& \mathbb{P}\left(S_{\varphi(k)}>\frac{\alpha \delta}{\eps}\right)\\
	&\leq& e^{-(l-\varphi(k))I\left(\frac{\alpha \delta }{\eps(l-\varphi(k))}\right)}\\
	& \leq &e^{-I\left(\frac{\alpha \delta }{\eps\varphi(1)}\right)},
\end{eqnarray*} using a similar argument than in Equation (\ref{cramer}). Finally, since $I$ is convex, we know that $I\left(\frac{\alpha\delta}{\eps\varphi(1)}\right)\geq \frac{I(\alpha\delta)}{\alpha\delta}\frac{\alpha\delta}{\eps\varphi(1)}=\frac{I(\alpha\delta)}{\eps\varphi(1)},$ as long as $\eps\varphi(1)\leq 1$. Thus, a union bound yields that, for $\eps\varphi(1)\leq 1$,
\begin{equation}
	\mathbb{P}\left(\exists k\in \llbracket0,N-1 \rrbracket :\exists l\in\llbracket \varphi(k), \varphi(k+1)\rrbracket :X_l ^{0*}-X_\vpk ^{0*}<-\frac{\alpha\delta}{\eps} \right)\leq \frac{T}{\eps\Delta t}e^{-\frac{I(\alpha\delta)}{\eps\varphi(1)}}\label{ode:5}.
\end{equation}
%We conclude the proof by writing that
In addition, according to Equation (\ref{ode:Ad}),
\begin{equation}
	\begin{array}{l}
		\displaystyle
		\mathbb{P}\left(\exists k\in\llbracket0,N-1\rrbracket: \exists l\in\llbracket \varphi(k),\varphi(k+1)\rrbracket: \eps X^{0*}_l<y_k-2\alpha \delta |A_\delta \right)  \vspace{0.25cm}
		\\  \leq \mathbb{P}\left(\exists k\in\llbracket0,N-1\rrbracket: \exists l\in\llbracket \varphi(k),\varphi(k+1)\rrbracket: \eps X^{0*}_l-\eps X^{0*}_{\vpk}<-\alpha \delta \right),\label{ode:6}
		\displaystyle
	\end{array}
\end{equation}
and since the function $x$ is the solution of \eqref{cauchypb}, Equation (\ref{euler:s}) \added[id=j]{from Appendix \ref{sec:EDO}} and the mean value theorem imply that
\begin{eqnarray}
	|y_k-x\left(l\eps\Delta t\right)|&\leq& |y_k-x\left(k\eps\varphi(1)\Delta t\right)|+|x\left(k\eps\varphi(1)\Delta t\right)-x\left(l\eps\Delta t\right)|\nonumber\\&\leq& e^{LT}\eps\varphi(1)\Delta t+ \sqrt{2\nr}\eps\varphi(1)\Delta t, \label{ode:7}
\end{eqnarray}
for all $k\in\llbracket0,N-1\rrbracket$ and $ l\in\llbracket \varphi(k),\varphi(k+1)\rrbracket$.
Thus, if we choose $\eps_1$ small enough so that, \jj{for all $\vep\leq \vep_1$,}
\begin{eqnarray*}
	\eps\varphi(1)\Delta t(\sqrt{2\nr}+e^{LT})&\leq& \alpha\delta,\\
	\jj{\frac{2\nc^2T\varphi(1)}{\delta ^2} \eps}&\leq&  \jj{\frac{\sqrt{\vep}}{2}},\\
	\frac{T}{\eps\Delta t}e^{-\frac{I(\alpha \delta)}{\eps \varphi(1)}} &\leq& \jj{\frac{\sqrt{\vep}}{2}},
\end{eqnarray*}
we get by combining (\ref{ode:doob}),(\ref{ode:4}), (\ref{ode:5}), and (\ref{ode:6}) that
\begin{equation*}
	\mathbb{P}\left(\exists k\in\llbracket0,N-1\rrbracket\ \exists l\in\llbracket \varphi(k),\varphi(k+1)\rrbracket : X_l^{0*}<y_k-2\alpha\delta\right)\leq \jj{\sqrt{\vep}},
\end{equation*}
and finally, using Equation (\ref{ode:7}), we conclude that
\begin{equation*}
	\mathbb{P}\left(\exists n\in \left\llbracket0,\varphi(N)\right\rrbracket :
	X_n^{0*}<x(n\eps\Delta t)-3\alpha\delta\right)\leq \jj{\sqrt{\vep}}.
\end{equation*}
Again, choosing $T+1$ instead of $T$ {in the definition of $N$},
we get that for $K= \max(K_0,K_1),$ there exists $\eps'>0$ such that
for all $\eps<\eps'$,
\begin{equation}
	\mathbb{P}\left(\exists n\in\left\llbracket0,\left\lfloor\frac{T}{\eps\Delta t}\right\rfloor\right\rrbracket : X_n^{0*}<x(n\eps\Delta t)-3\alpha\delta\right)\leq \jj{\sqrt{\vep}}.
	\label{eq:th3}
\end{equation}
This proves the theorem \jj{when $r$ is a smooth growth rate function}.

\jj{\textit{Step 2.} We now assume that $r$ is a good growth rate
	function.
	Let $\tilde \delta>0$. Pick $n$ large enough so that
	\begin{equation}
		\label{approx:lb_smooth}
		\sup_{t\in[0,T]}|x(t)-\underline{x}_n(t)|<\tilde \delta,
	\end{equation}
	where $\underline{x}_n$ refers to the solution of the Cauchy problem 
	\eqref{cauchypb}
	with growth rate function $\theta_n$ (see \ref{eq:cp_approx}).
	By definition, $\theta_n$ is a smooth growth rate function.
	Moreover, it follows from Assumption~\ref{Assumption2} and
	Remark~\ref{rem:coupling_lemma} that the particle system with reproduction
	law $\nu_{r,n,K}$ stochastically dominates
	the system with reproduction
	law $\tilde{\nu}_{\theta_n,n,K}$.
	This observation, combined with Step~1,
	shows that there exists $\vep_0\equiv\vep_0(n,\Delta t,\Delta x, \delta)$,
	$\alpha\equiv \alpha(n)$,
	and $K^*\equiv K^*(n,\Delta t,\Delta x,  \delta)$ such that}
\begin{multline*}
	\forall \vep < \vep_0, \quad \forall K\geq K^*,\quad
	\mathbb{P}\left(\exists k \in \left\llbracket 0,\lfloor T/(\eps\Delta t)
	\rfloor\right\rrbracket : \eps X_k^*<{x}(k\eps\Delta t)
	+3\alpha\delta+\tilde \delta\right)
	\leq \jj{2\sqrt{\vep}}.
\end{multline*}
\jj{It then remains to choose $\delta$ small enough so that 
	$3\alpha \delta< \tilde \delta$ 
	to conclude the proof of Proposition~\ref{thlowerbound1}.
}
\end{proof}

%%%%%%%%%%%%%%%%%%%%%%%%%%%%%%%%%%%%%%%%%%%%%%%%%%%%%%%%%%%%%%%%%%%

\appendix
\section{Appendix: the branching random walk}
\label{SecBRW}

A branching random walk is a branching particle system \added[id=j]{$\mathbf{\Xi}$} governed by a reproduction law $(p_k)_{k\in\mathbb{N}}$ and a displacement law $\mu$.  The process starts with a single particle located at the origin. This particle is replaced by $N$ new particles located at positions $(\zeta_1,...,\zeta_N)$, where $N$ is distributed according to $(p_k)_{k\in\mathbb{N}}$ and $(\zeta_i)$ is an i.i.d. sequence of random variables of law $\mu$, independent of $N$.  These individuals constitute the first generation of the branching random walk. Similarly, the individuals of the $n$-th generation reproduce independently of each other according to $(p_k)_{k\in\mathbb{N}}$ and their offspring are independently distributed around the parental location according to $\mu$.

\added[id=j]{In this section, we assume that the displacement law $\mu$ is given by Equation \eqref{displaw} and that the reproduction law $(p_k)$ satisfies
$$1<m:=\sum_{k\in\mathbb{N}} kp_k<\infty.$$ The notation used below are defined in Section \ref{sec:not} and Section \ref{sec:BRW}}.

\subsection{Many-to-one lemma} \label{sec:manyto1}

%Let $D_n$ denote the set of individuals living during the $n$-th generation of the BRW and remark that $(|D_n|)_{n\in\mathbb{N}}$ constitutes a Galton-Watson process. 

\added[id=j]{For each particle $u$ such that $|u|=n$ , we denote by $u_0,...,u_n$ the set of its ancestors in chronological order (basically, $u_0$ is the particle living at generation $0$  and $u_n=u$)}.
% We assume that $1<m=\sum_{i=0}^\infty kp_k<\infty$.

\begin{lemma}[Many-to-one Lemma, see e.g.~\cite{ShiLectureNotes}, Theorem~1.1] \label{lem:manyto1}
Let $n\geq 1$ and $g:\mathbb{R}^n\rightarrow [0,\infty)$ be a measurable function. Let $(Z_k)_{k\in \mathbb{N}}$ be a sequence of random variables such that $(Z_{k+1}-Z_k)$ is i.i.d.~of law $\mu$. Then,
\begin{equation}
	\mathbb{E}\left[\sum_{u \in D_n}g(\Xi_{u_1},...,\Xi_{u_n})\right]=m^n\mathbb{E}\left[g(Z_1,...,Z_n)\right].
	\label{many21}
\end{equation}
\end{lemma}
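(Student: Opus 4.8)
The plan is to prove the identity by induction on $n$, exploiting the recursive (branching) structure of the tree together with the independence of the offspring number from the displacements, which makes Wald's identity applicable. Throughout, write $\mathcal{G}_n$ for the $\sigma$-algebra generated by the genealogy and positions of the first $n$ generations, and recall that for a particle $u$ with $|u|=n$, the labels $u_1,\dots,u_n$ denote its ancestral line, so that $(\Xi_{u_1},\dots,\Xi_{u_n})$ records the spatial trajectory of the lineage leading to $u$ at generations $1$ through $n$. We take the random walk $(Z_k)$ to be started from $Z_0=0$, so that $Z_1\sim\mu$ and $Z_k$ is a sum of $k$ i.i.d.\ increments of law $\mu$.

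For the base case $n=1$, the first generation $D_1$ consists of $N$ particles placed at $\zeta_1,\dots,\zeta_N$, where $N$ has law $(p_k)$ and $(\zeta_i)$ are i.i.d.\ of law $\mu$, independent of $N$; moreover $u_1=u$ for every $u\in D_1$. Hence $\mathbb{E}\big[\sum_{u\in D_1}g(\Xi_{u_1})\big]=\mathbb{E}\big[\sum_{i=1}^N g(\zeta_i)\big]=\mathbb{E}[N]\,\mathbb{E}[g(\zeta_1)]=m\,\mathbb{E}[g(Z_1)]$, by Wald's identity (using the independence of $N$ from the $(\zeta_i)$) and $Z_1\sim\mu$.

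For the inductive step, I would condition on $\mathcal{G}_n$ and decompose the sum over $D_{n+1}$ according to the parent $v\in D_n$. By the branching property, conditionally on $\mathcal{G}_n$ each $v$ produces an independent number of children, each displaced from $\Xi_v$ by an independent $\mu$-distributed step, and every such child $u$ satisfies $u_j=v_j$ for $j\le n$ and $\Xi_{u_{n+1}}=\Xi_v+\xi_u$. Applying Wald's identity to the (conditionally independent) offspring of $v$ gives
\[
\mathbb{E}\Big[\sum_{u:\,u_n=v} g(\Xi_{u_1},\dots,\Xi_{u_{n+1}})\,\Big|\,\mathcal{G}_n\Big] = m\,\tilde g(\Xi_{v_1},\dots,\Xi_{v_n}),
\]
where $\tilde g(x_1,\dots,x_n):=\mathbb{E}[g(x_1,\dots,x_n,x_n+\xi)]$ with $\xi\sim\mu$. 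Summing over $v\in D_n$, taking expectations, and invoking the induction hypothesis for the nonnegative measurable function $\tilde g$ yields $m^{n+1}\,\mathbb{E}[\tilde g(Z_1,\dots,Z_n)]$. Finally, since $Z_{n+1}-Z_n\sim\mu$ is independent of $(Z_1,\dots,Z_n)$, the variable $Z_n+\xi$ has the same joint law with $(Z_1,\dots,Z_n)$ as $Z_{n+1}$, so $\mathbb{E}[\tilde g(Z_1,\dots,Z_n)]=\mathbb{E}[g(Z_1,\dots,Z_{n+1})]$, which closes the induction.

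The only genuinely delicate point is the bookkeeping in the inductive step: one must verify that the reduced function $\tilde g$ is again a nonnegative measurable function of $n$ variables, so that the induction hypothesis genuinely applies, and that the conditional independence of offspring counts and displacements across the particles of generation $n$ legitimately reduces the computation to one application of Wald's identity per parent. The nonnegativity of $g$ is what makes this harmless: it guarantees that all expectations are well defined in $[0,\infty]$ and that the Tonelli-type interchanges of summation and expectation used above are valid, so no integrability conditions on $g$ need to be imposed.
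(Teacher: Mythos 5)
Your proof is correct. There is, however, nothing to compare it against inside the paper: the authors do not prove this lemma, they invoke it as a known result and delegate the proof to the cited lecture notes of Shi (Theorem~1.1 there). Your argument is the standard, self-contained way of establishing it, and it holds up at every step: the base case is Wald's identity, which, as you note, is valid in $[0,\infty]$ for nonnegative integrands by Tonelli, using that the offspring number is independent of the displacements --- exactly the structure postulated in the paper's definition of the branching random walk; the inductive step conditions on the first $n$ generations, applies the same one-generation Wald/Tonelli computation to each parent $v \in D_n$, and feeds the reduced function $\tilde g(x_1,\dots,x_n) = \mathbb{E}[g(x_1,\dots,x_n,x_n+\xi)]$ back into the induction hypothesis, with nonnegativity and measurability of $\tilde g$ guaranteed by Tonelli; and the closing identification $\mathbb{E}[\tilde g(Z_1,\dots,Z_n)] = \mathbb{E}[g(Z_1,\dots,Z_{n+1})]$ uses precisely the i.i.d.-increment assumption together with your normalization $Z_0 = 0$, which is indeed the intended reading of the statement (the comparison walk must start at the origin, like the BRW). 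Your two ``delicate points'' are the right ones and are handled correctly; I would only add that full conditional independence of the offspring across distinct parents of generation $n$ is never needed --- linearity of conditional expectation plus the correct per-parent conditional law suffices, which is in fact all your write-up uses. For context, the source behind the paper's citation proves a more general tilted (size-biased) version of this identity, with the same inductive mechanism at its core; for the untilted statement quoted here, your elementary induction is complete.
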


\subsection{Regularity of the rate function}
\label{secconvconj}

In this subsection, we state several results on the function $I$ defined by Equation (\ref{convconju}) in Section \ref{partub}. All the notations are introduced in Section \ref{sec:BRW}.

\begin{lemma}
\label{lem51}
Let $y\ge0$ and assume $\Delta x \le 2y$. Then,
\begin{equation}
	I_0\left(y-\frac{\Delta x}{2}\right)\leq I(y)\leq I_0\left(y+\frac{\Delta x}{2}\right).
	\label{enca}
\end{equation}
\end{lemma}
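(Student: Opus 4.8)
The plan is to first establish a two-sided comparison between $\Lambda$ and $\Lambda_0$ for $\lambda\ge0$, and then transfer it to the rate functions through the Legendre transform \eqref{convconju}. Writing $e^{\Lambda(\lambda)}=\mathbb E[e^{\lambda X_1}]=\sum_{j\in\mathbb Z}e^{\lambda j\Delta x}\mu(j)$ and inserting the definition \eqref{displaw} of $\mu(j)$, the quantity $e^{\Lambda(\lambda)}$ becomes a sum of integrals over the cells $I_j\coloneqq[(j-\tfrac12)\Delta x,(j+\tfrac12)\Delta x]$. For $x\in I_j$ one has $|j\Delta x-x|\le\Delta x/2$, so for $\lambda\ge0$ the constant $e^{\lambda j\Delta x}$ is sandwiched between $e^{\lambda x}e^{-\lambda\Delta x/2}$ and $e^{\lambda x}e^{\lambda\Delta x/2}$. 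Since the $I_j$ partition $\mathbb R$, summing these pointwise bounds and recognising the resulting integral as $e^{\Lambda_0(\lambda)}$ via \eqref{eq:Lambda0} yields, for every $\lambda\ge0$,
\begin{equation*}
\Lambda_0(\lambda)-\frac{\lambda\Delta x}{2}\le\Lambda(\lambda)\le\Lambda_0(\lambda)+\frac{\lambda\Delta x}{2}.
\end{equation*}

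Next I would pass to the rate functions. Because $\mu$ is symmetric with mean $0$, the convex function $\Lambda$ satisfies $\Lambda'(0)=0$, so for $y\ge0$ the supremum in \eqref{convconju} is attained at some $\lambda\ge0$; likewise, for $z\ge0$ the maximiser in $I_0(z)=\sup_\lambda(\lambda z-\Lambda_0(\lambda))$ is $\lambda=z/\Delta t\ge0$, by \eqref{io}. I may therefore restrict every supremum to $\lambda\ge0$. For the upper bound, $\Lambda(\lambda)\ge\Lambda_0(\lambda)-\lambda\Delta x/2$ gives $\lambda y-\Lambda(\lambda)\le\lambda(y+\tfrac{\Delta x}{2})-\Lambda_0(\lambda)$, and taking the supremum over $\lambda\ge0$ (using $y+\Delta x/2\ge0$) identifies the right-hand side as $I_0(y+\Delta x/2)$. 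For the lower bound, $\Lambda(\lambda)\le\Lambda_0(\lambda)+\lambda\Delta x/2$ gives $\lambda y-\Lambda(\lambda)\ge\lambda(y-\tfrac{\Delta x}{2})-\Lambda_0(\lambda)$, and the supremum over $\lambda\ge0$ equals $I_0(y-\Delta x/2)$ exactly because the hypothesis $\Delta x\le2y$ makes $y-\Delta x/2\ge0$, so the unconstrained maximiser of $\lambda\mapsto\lambda(y-\tfrac{\Delta x}{2})-\Lambda_0(\lambda)$ is nonnegative.

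The computation is elementary, and I do not expect a serious obstacle; the only point requiring care is the reduction of the suprema to $\lambda\ge0$. This is precisely where the hypothesis $\Delta x\le2y$ is used: it guarantees $y-\Delta x/2\ge0$, so that the constrained supremum $\sup_{\lambda\ge0}$ coincides with the full Legendre transform $I_0(y-\Delta x/2)$ in the lower bound. Everything else follows from the partition of $\mathbb R$ into the cells $I_j$ together with the explicit Gaussian expressions \eqref{eq:Lambda0} and \eqref{io} for $\Lambda_0$ and $I_0$.
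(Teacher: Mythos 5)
Your proof is correct and follows essentially the same route as the paper's: sandwich $\Lambda$ between $\Lambda_0(\lambda)\pm\lambda\Delta x/2$ for $\lambda\ge 0$ via the cell decomposition of $\mu$, restrict the Legendre transforms to $\lambda\ge0$ (legitimate since the arguments $y$ and $y\pm\Delta x/2$ are nonnegative, the latter thanks to $\Delta x\le 2y$), and conclude. The only cosmetic difference is that you justify the restriction to $\lambda\ge0$ via $\Lambda'(0)=0$ and concavity of $\lambda\mapsto\lambda y-\Lambda(\lambda)$, whereas the paper simply invokes that $\mu$ has mean zero; both are valid.
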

\begin{proof}
First, remark that, for $\lambda\ge0$
\begin{eqnarray*}
	\Lambda(\lambda) & = &
	\log \sum_{i\in \mathbb{Z}}\left( 
	\int_{(i-\frac{1}{2})\Delta x}^{(i+\frac{1}{2})\Delta x} 
	\frac{1}{\sqrt{\Delta t}}\jj{\mu}\left(\frac{z}{\sqrt{\Delta t}}\right)dz 
	\right)e^{\lambda i\Delta x} \\
	&\geq & \log \sum_{i\in \mathbb{Z}}\left( \int_{(i-\frac{1}{2})\Delta x} ^{(i+\frac{1}{2})\Delta x} \frac{1}{\sqrt{\Delta t}}\jj{\mu}\left(\frac{z}{\sqrt{\Delta t}}\right)e^{\lambda(z-\frac12\Delta x)}dz \right)= \Lambda_{0}(\lambda)-\lambda \frac{\Delta x}2.
\end{eqnarray*}
Similarly, one can obtain an upper bound on $\Lambda$ and get the following estimate for any $\lambda \ge0$:
\begin{equation}\Lambda_0(\lambda)-\lambda \frac{\Delta x}2\leq \Lambda(\lambda)\leq \Lambda_{0}(\lambda)+\lambda \frac{\Delta x}2.
	\label{enc23}
\end{equation}
Now note that we have for $y\ge0$, since $\mu$ has expectation $0$,
\[
I(y) = \sup_{\lambda \ge0} \left(\lambda y - \Lambda (\lambda)\right),
\]
and similarly, for every $z\ge0$,
\[
I_0(z) = \sup_{\lambda \ge0} \left(\lambda z - \Lambda_0 (\lambda)\right).
\]
Applying these with $z = y\pm\Delta x/2$ (note that $y-\Delta x/2 \ge0$ by assumption) and using \eqref{enc23}, this  proves the lemma.
\end{proof}

\begin{lemma}
\label{minc}
Let $\rho>1$ and assume $\Delta x<\sqrt{2\Delta t\log(\rho)}$. Then,
\begin{equation}
	\frac{c_0}{2}<c<2c_0,
\end{equation}
where $c_0$ is the unique solution of $I_0(c)=\log(\rho)$ and $c$ is the unique solution of $I(c)=\log(\rho)$.
%(resp. $c$) is the unique positive solution of $I_0(c_0)=\log(\rho)$ (resp. $I(c)=\log(\rho))$.
\end{lemma}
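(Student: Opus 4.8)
The plan is to sandwich $c$ between $c_0/2$ and $2c_0$ by combining the monotonicity of the two rate functions with the two-sided estimate of Lemma~\ref{lem51}. First I record the explicit form $c_0=\sqrt{2\Delta t\log\rho}$ coming from \eqref{io}, so that the hypothesis $\Delta x<\sqrt{2\Delta t\log\rho}$ reads simply $\Delta x<c_0$. I also recall that $I_0(y)=y^2/(2\Delta t)$ and $I$ are both strictly increasing on $(0,\infty)$, and that by definition of $c$ and $c_0$ one has $I(c)=\log\rho=I_0(c_0)$.

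For the upper bound $c<2c_0$, I would show that $I(2c_0)>\log\rho$ and then conclude by monotonicity of $I$. The admissibility condition $\Delta x\le 2y$ of Lemma~\ref{lem51} holds at $y=2c_0$ because $\Delta x<c_0<4c_0$, so the lemma gives $I(2c_0)\ge I_0(2c_0-\Delta x/2)$. Since $\Delta x<c_0$ forces $2c_0-\Delta x/2>c_0>0$, monotonicity of $I_0$ yields $I_0(2c_0-\Delta x/2)>I_0(c_0)=\log\rho$. Hence $I(2c_0)>\log\rho=I(c)$, and as $I$ increases on $(0,\infty)$ this gives $c<2c_0$.

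For the lower bound $c>c_0/2$, I would symmetrically show that $I(c_0/2)<\log\rho$. Applying Lemma~\ref{lem51} at $y=c_0/2$ is legitimate since $\Delta x<c_0=2y$, and it gives $I(c_0/2)\le I_0(c_0/2+\Delta x/2)$. The assumption $\Delta x<c_0$ is exactly what makes $c_0/2+\Delta x/2<c_0$, whence $I_0(c_0/2+\Delta x/2)<I_0(c_0)=\log\rho$. Therefore $I(c_0/2)<\log\rho=I(c)$, and monotonicity of $I$ gives $c>c_0/2$.

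There is essentially no hard step here: the entire argument reduces to checking that the condition $\Delta x\le 2y$ in Lemma~\ref{lem51} is met at the two test points $y=2c_0$ and $y=c_0/2$, and that the single hypothesis $\Delta x<c_0$ simultaneously produces the two strict inequalities $2c_0-\Delta x/2>c_0$ and $c_0/2+\Delta x/2<c_0$. The only mild point to watch is not to blur strict and non-strict inequalities; since $\rho>1$ guarantees $c_0>0$, all the relevant arguments remain in the region $(0,\infty)$ where $I$ and $I_0$ are strictly increasing, so the strict inequalities are preserved.
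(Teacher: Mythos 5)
Your proof is correct and rests on exactly the same ingredients as the paper's: the two-sided comparison of $I$ with $I_0$ from Lemma~\ref{lem51}, the explicit value $c_0=\sqrt{2\Delta t\log\rho}$, and monotonicity of $I$ and $I_0$ on $(0,\infty)$. The paper is merely terser, reading off $|c-c_0|\le \Delta x/2<c_0/2$ from Lemma~\ref{lem51}, whereas your test-point evaluation at $y=2c_0$ and $y=c_0/2$ reaches the same (slightly weaker, but sufficient) conclusion while checking the hypothesis $\Delta x\le 2y$ only at points where it is immediate.
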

\begin{proof}
Recall from \eqref{def:c0} that $c_0 = \sqrt{2\Delta t\log(\rho)}$, hence $\Delta x < c_0 \le 2c_0$. By Lemma~\ref{lem51} and the definitions of $c$ and $c_0$, it follows that
\[
|c-c_0| \le \frac{\Delta x}{2} < \frac{c_0}{2}.
\]
This yields the lemma.
%
%Let $\rho\in[\underline{\rho},\Bar{\rho}]$ and $y\in\left[0,\frac{c_0}{2}\right]$. 
%%and prove that $I(y)\leq  \frac{\log(\rho)}{2}<I(c),$ for $\Delta x>0$ small enough.
%The function $I$ is increasing on $(0,\infty),$ therefore, $I(y)\leq I(\frac{c_0}{2})$. Besides, since $\Delta x\leq\sqrt{2\Delta t \log(\underline{\rho})}\leq\frac{c_0}{16},$
%\begin{eqnarray*}
%    I\left(\frac{c_0}{2}\right)&\leq& I_0\left(\frac{c_0}{2}\right)+c_0 \frac{\Delta x}{\Delta t}
%    \leq I_0\left(\frac{c_0}{2}\right)+\frac{1}{16}\frac{c_0^2}{\Delta  t}\\
%    &\leq&\frac{1}{4}I_0(c_0)+ \frac{1}{8}\frac{ c_0^2}{2\Delta t}
%    =\frac{3}{8}I_0(c_0)=\frac{3}{8}\log(\rho),
%\end{eqnarray*}
%according to Lemma \ref{lem51} and Equations (\ref{io}) and (\ref{def:c0}). Since $I$ is increasing, this implies that $\frac{c_0}{2}<c$.
%Let us now consider $y>2c_0.$ Then, $I(y)\geq I(2c_0)$ and since $\Delta x\leq 2 c_0,$
%\begin{eqnarray*}
%    I(2c_0)&\geq& I_0(2c_0)-c_0\frac{\Delta x}{\Delta t}=4I_0(c_0)-c_0\frac{\Delta x}{\Delta t}\\
%    &\geq& 4I_0(c_0)-\frac{1}{8}I_0(c_0)\geq 3I_0(c_0)=3\log(\rho),
%\end{eqnarray*}
%according to Lemma \ref{lem51} and Equations (\ref{io}) and (\ref{def:c0}). Since $I$ is increasing, this estimate implies that $c<2c_0$.
\end{proof}

\begin{lemma} \label{lem:convexity} Let $y>0$. If $\Delta x<\frac{y}{2}$, then for all $\eta\ge 0$,
\begin{equation}
	I(y) + \frac{y^2}{4\Delta t}\eta \leq I((1+\eta)y).
	\label{enc}
\end{equation}
\end{lemma}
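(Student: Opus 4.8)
The plan is to reduce everything to a single lower bound on the derivative $I'(y)$ and then to extract that bound from the pointwise control of $\Lambda$ already obtained in the proof of Lemma~\ref{lem51}. Since $\mu$ has unbounded support on both sides, $\Lambda$ is smooth and strictly convex (as recorded after \eqref{convconju}), so $\Lambda'$ is a continuous increasing bijection of $\mathbb{R}$ and its conjugate $I$ is differentiable on $(0,\infty)$ with $I'(y)=(\Lambda')^{-1}(y)$. Write $\lambda^\ast=\lambda^\ast(y)$ for the unique solution of $\Lambda'(\lambda^\ast)=y$ (so $\lambda^\ast=I'(y)\ge0$ since $y>0$). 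The tangent line inequality for the convex function $I$ gives, for every $\eta\ge0$,
\[
I((1+\eta)y)\ \ge\ I(y)+I'(y)\,\eta y\ =\ I(y)+\lambda^\ast\,\eta y .
\]
Hence it suffices to prove the derivative estimate $\lambda^\ast\ge \frac{y}{4\Delta t}$, after which the claimed inequality with factor $\frac{y^2}{4\Delta t}$ follows simultaneously for all $\eta\ge0$.

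Because $\Lambda'$ is increasing, the bound $\lambda^\ast\ge\frac{y}{4\Delta t}$ is equivalent to $\Lambda'\!\left(\frac{y}{4\Delta t}\right)\le y$, and this is what I would establish. The only input is the two-sided control \eqref{enc23} from the proof of Lemma~\ref{lem51}: for $\lambda\ge0$,
\[
\frac{\Delta t}{2}\lambda^2-\frac{\Delta x}{2}\lambda\ \le\ \Lambda(\lambda)\ \le\ \frac{\Delta t}{2}\lambda^2+\frac{\Delta x}{2}\lambda .
\]
The crucial step is to turn this \emph{pointwise} sandwich into a \emph{one-sided} bound on the derivative. For this I would use that, by convexity, $\Lambda'(\lambda)\le \frac{\Lambda(\lambda+\delta)-\Lambda(\lambda)}{\delta}$ for any $\delta>0$; bounding $\Lambda(\lambda+\delta)$ from above and $\Lambda(\lambda)$ from below via the display above yields
\[
\Lambda'(\lambda)\ \le\ \Delta t\,\lambda+\frac{\Delta t\,\delta}{2}+\frac{\Delta x\,\lambda}{\delta}+\frac{\Delta x}{2}.
\]
Optimising the two $\delta$-dependent terms by AM--GM (optimal step $\delta=\sqrt{2\Delta x\lambda/\Delta t}$) gives the clean bound $\Lambda'(\lambda)\le \Delta t\,\lambda+\sqrt{2\Delta t\,\Delta x\,\lambda}+\frac{\Delta x}{2}$.

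It then remains to evaluate at $\lambda=\frac{y}{4\Delta t}$ and invoke the hypothesis $\Delta x<\frac{y}{2}$. Here $\Delta t\lambda=\frac y4$ and $\sqrt{2\Delta t\,\Delta x\,\lambda}=\sqrt{\Delta x\,y/2}$, and since $\Delta x<y/2$ one has $\sqrt{\Delta x\,y/2}<\frac y2$ and $\frac{\Delta x}{2}<\frac y4$, so that
\[
\Lambda'\!\left(\tfrac{y}{4\Delta t}\right)\ \le\ \frac y4+\sqrt{\tfrac{\Delta x\,y}{2}}+\frac{\Delta x}{2}\ <\ \frac y4+\frac y2+\frac y4=y,
\]
which is exactly $\Lambda'(\frac{y}{4\Delta t})\le y$, hence $\lambda^\ast\ge\frac{y}{4\Delta t}$, completing the argument. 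I expect the main obstacle to be precisely the middle step: Lemma~\ref{lem51} only controls $\Lambda$ (and $I$) at a point, whereas the statement concerns how fast $I$ grows, i.e.\ a derivative; the device of combining the upper and lower bounds through a forward difference quotient and then optimising the increment $\delta$ is what converts the pointwise estimate into the derivative bound. It is reassuring that the resulting threshold matches the hypothesis exactly — writing $\gamma=2\Delta x/y$, the final inequality reads $2\sqrt{\gamma}+\gamma\le3$, which holds for $\gamma\in[0,1)$ and degenerates to equality precisely at $\Delta x=y/2$.
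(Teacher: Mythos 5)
Your proof is correct, but it takes a genuinely different route from the paper's. The paper linearizes via the chord through the origin: since $I$ is convex and $I(0)=0$, one has $I((1+\eta)y)\geqslant (1+\eta)I(y)=I(y)+\eta I(y)$, and then it suffices to bound the \emph{value} $I(y)\geqslant I_0\left(y-\frac{\Delta x}{2}\right)\geqslant \frac{y^2}{4\Delta t}$ directly from Lemma~\ref{lem51} and $\Delta x<\frac{y}{2}$ — no derivatives, no return to $\Lambda$. You instead linearize via the tangent line at $y$, which shifts the burden onto the \emph{derivative} bound $I'(y)\geqslant \frac{y}{4\Delta t}$, and you extract that bound from the sandwich \eqref{enc23} on $\Lambda$ through a forward difference quotient plus an AM--GM optimization. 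All your steps check out (the duality $I'=(\Lambda')^{-1}$ is legitimate here because $\Lambda$ is finite, smooth and strictly convex with $\Lambda'$ surjective, as the unbounded two-sided support guarantees), and your final numerical verification $2\sqrt{\gamma}+\gamma\leqslant 3$ for $\gamma=2\Delta x/y<1$ is accurate. What the comparison buys: the paper's argument is shorter and stays entirely at the level of rate functions; yours demonstrates that only a derivative bound is needed, which is in principle weaker information. However, the obstacle you identify as the crux — converting a pointwise estimate into a derivative estimate — can be bypassed in one line: convexity and $I(0)=0$ give $I'(y)\geqslant \frac{I(y)-I(0)}{y}=\frac{I(y)}{y}$, so the paper's pointwise bound $I(y)\geqslant \frac{y^2}{4\Delta t}$ immediately yields your $I'(y)\geqslant \frac{y}{4\Delta t}$, making the difference-quotient and AM--GM machinery unnecessary.
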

\begin{proof} The function $I$ is convex on $\mathbb{R}$. Thus, \begin{eqnarray}
	I(y)&=&I\left(\frac{1}{1+\eta}\,(1+\eta)y\right)\nonumber \\
	&\leq& \frac{1}{1+\eta}I((1+\eta)y)+\frac{\eta}{1+\eta}I(0)=\frac{1}{1+\eta}I((1+\eta)y),
	\label{eq:conv71}
\end{eqnarray}
which is equivalent to $I((1+\eta)y)\geq I(y)+\eta I(y)$. Besides, if $\Delta x<\frac{y}{2}$, Lemma \ref{lem51} implies that
\begin{equation}
	I(y)\geq I_0\left(y-\frac{\Delta x} 2\right) =  \frac{(y-\frac{\Delta x} 2)^2}{2\Delta t}\geq \frac{y^2 - y \Delta x}{2\Delta t}\geq\frac{y^2}{4\Delta t}.\label{eq:conv72}
\end{equation}
Combining (\ref{eq:conv71}) and (\ref{eq:conv72}), we get Equation (\ref{enc}).
\end{proof}

\begin{lemma}\label{lem:6}
Let $\underline{y}>0$. If $\Delta x<\frac{\underline{y}}{2}$, then
\begin{equation}
	\frac{\underline{y}}{4\Delta t}|y_1-y_2|\leq |I(y_1)-I(y_2)|,\quad \forall y_1,y_2\ge \underline y.
\end{equation}

\end{lemma}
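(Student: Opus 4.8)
The plan is to deduce the statement directly from Lemma~\ref{lem:convexity}, which already packages the convexity of $I$ together with the quadratic lower bound $I(y)\ge I_0(y-\Delta x/2)$ into a usable growth estimate. Since the claimed inequality is symmetric in $y_1,y_2$ and trivial when $y_1=y_2$, I would first assume without loss of generality that $\underline{y}\le y_1<y_2$.

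The key step is to apply Lemma~\ref{lem:convexity} at the base point $y=y_1$ with dilation factor $\eta=(y_2-y_1)/y_1\ge 0$, chosen so that $(1+\eta)y_1=y_2$. The hypothesis $\Delta x<y_1/2$ needed there holds because $\Delta x<\underline{y}/2\le y_1/2$. Lemma~\ref{lem:convexity} then gives
\[
I(y_1)+\frac{y_1^2}{4\Delta t}\cdot\frac{y_2-y_1}{y_1}\le I((1+\eta)y_1)=I(y_2),
\]
that is, $I(y_2)-I(y_1)\ge \dfrac{y_1(y_2-y_1)}{4\Delta t}$. Bounding $y_1\ge \underline{y}$ in the prefactor yields $I(y_2)-I(y_1)\ge \frac{\underline{y}}{4\Delta t}(y_2-y_1)$; in particular the right-hand side is nonnegative, so $|I(y_1)-I(y_2)|=I(y_2)-I(y_1)\ge \frac{\underline{y}}{4\Delta t}|y_1-y_2|$, as claimed.

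I expect no genuine obstacle here: all the analytic content has been isolated in Lemma~\ref{lem:convexity}, and what remains is only to verify its hypotheses and to perform the substitution $\eta=(y_2-y_1)/y_1$. The one point deserving attention is that the factor $y^2/(4\Delta t)$ in Lemma~\ref{lem:convexity} is evaluated at the \emph{left} endpoint $y_1$ of the interval $[y_1,y_2]$; this is precisely what makes the replacement $y_1\ge\underline{y}$ legitimate and produces a slope that is uniform over $y_1,y_2\ge\underline{y}$. Should one prefer a self-contained argument, the same bound follows by lower-bounding the left derivative of the convex function $I$ at $y_1$ via the chord from $y_1-s$ to $y_1$ with $s=\sqrt{2y_1\Delta x}$, using $I(y_1)\ge I_0(y_1-\Delta x/2)$ and $I(y_1-s)\le I_0(y_1-s+\Delta x/2)$; the main care there is that the upper bound of Lemma~\ref{lem51} must be used in the regime $y_1-s<\Delta x/2$, which is permissible since that half of the estimate in fact holds for every nonnegative argument.
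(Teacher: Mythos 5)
Your proof is correct and rests on exactly the same two ingredients as the paper's own proof: convexity of $I$ together with $I(0)=0$, and the quadratic lower bound $I(y)\geqslant \frac{y^2}{4\Delta t}$ (valid for $\Delta x<y/2$) coming from Lemma~\ref{lem51}. The only difference is organizational: you invoke Lemma~\ref{lem:convexity} at the base point $y_1$ (chord comparison $0\to y_1\to y_2$, then bound $y_1\geqslant\underline{y}$ in the slope), whereas the paper re-derives the chord inequality anchored at $\underline{y}$ (comparison $0\to\underline{y}\to y_1\to y_2$) and applies the quadratic bound at $\underline{y}$ instead.
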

\begin{proof} Since I is convex on $\mathbb{R}$, we have that
\begin{equation}\label{convex112}
	\frac{I(\underline{y})}{\underline{y}}=\frac{I(\underline{y})-I(0)}{\underline{y}-0}\leq \frac{I(y_1)-I(\underline{y})}{y_1-\underline{y}}\leq \frac{I(y_2)-I(y_1)}{y_2-y_1}.
\end{equation}
Besides, as in \eqref{eq:conv72}, we have that
\begin{equation}
	I(\underline{y})\geq \frac{\underline{y}^2}{4\Delta t},\label{convex113}
\end{equation}
since $\Delta x\leq\frac{\underline{y}}{2}$. We conclude the proof by combining (\ref{convex112}) and (\ref{convex113}) and recalling that $I$ is non-decreasing on $(0,\infty)$ so that $\frac{I(y_2)-I(y_1)}{y_2-y_1}=\frac{|I(y_2)-I(y_1)|}{|y_2-y_1|}.$
%
%If $y_1=\underline{y}$, we write (\ref{convex112}) with $\underline{y}-\eps$ instead of $\underline{y}$ and let $\eps$ tend to $0$. The case $y_2=\bar{y}$ can be dealt with similarly.
\end{proof}

\subsection{First and second moment of the maximum}
For all $n\in \mathbb{N},$ we denote by $M_n$ the position of the right-most particle in the branching random walk. In this section
% Sections \ref{sec:cvexp} and \ref{sec:ubm2}, 
we study  the asymptotic behaviour of the first and second moments of $M_n$. These are used for the proof of the lower bound in Section~\ref{partlow}. We recall that the reproduction law of the BRW is denoted by $(p_k)_{k\in\mathbb{N}}$. %and that $m=\sum_{k\in \mathbb{N}}kp_k$ is assumed to be larger than $1$.
\label{sec:cvexp}

\begin{lemma}[Biggins' theorem~\cite{biggins1977chernoff}]
\label{lem:ptwcv} Let $\Delta t>0$ and $\Delta x>0$. Assume that $m>1$ and $p_0 = 0$. Let $c$ be the unique positive solution of $I(c)=\log(m)$. Then,
$$\lim_{n\rightarrow 0}\mathbb{E}\left[\frac{M_n}{n}\right]=c.$$
\end{lemma}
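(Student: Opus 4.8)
The statement is the classical theorem of Biggins on the linear speed of the maximal displacement of a branching random walk; I read the limit as $n\to\infty$. The plan is to establish the two inequalities $\limsup_n \mathbb E[M_n]/n \le c$ and $\liminf_n \mathbb E[M_n]/n \ge c$ separately, the first by a first-moment (many-to-one) computation and the second by a second-moment argument reinforced by a germination step.

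\emph{Upper bound.} Fix $c'>c$ and set $Z_n = \#\{v : |v|=n,\ \Xi_v > c'n\}$. By the many-to-one Lemma~\ref{lem:manyto1}, $\mathbb E[Z_n] = m^n\,\mathbb P(S_n > c'n)$, where $(S_n)$ is the random walk with increments of law $\mu$, and the Chernoff bound used to obtain \eqref{cramer} gives $\mathbb P(S_n>c'n)\le e^{-nI(c')}$. Since $I$ is strictly increasing on $(0,\infty)$ and $I(c)=\log m$, we have $I(c')>\log m$, whence $\mathbb E[Z_n]\le e^{-n(I(c')-\log m)}\to 0$ and, by Markov, $\mathbb P(M_n>c'n)\to 0$. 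Integrating the analogous tail estimate $\mathbb P(M_n>c'n+s)\le m^n\mathbb P(S_n>c'n+s)$ over $s\ge 0$ and using the convexity (in fact quadratic growth) of $I$ shows $\mathbb E[(M_n-c'n)^+]\to 0$; since $M_n^+\le c'n+(M_n-c'n)^+$, this yields $\mathbb E[M_n]/n\le c'+o(1)$, and letting $c'\downarrow c$ gives the upper bound.

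\emph{Lower bound.} First I would control the negative part: because $p_0=0$, every particle has a descendant in each later generation, so following one lineage produces a particle at position $S_n$, a random walk of mean zero; hence $M_n\ge S_n$, so $M_n^-\le S_n^-$ and $\mathbb E[M_n^-]\le\sqrt{\Var(\mu)\,n}=o(n)$. Now fix $c'<c$. To create particles near level $c'n$ I would apply the Paley--Zygmund inequality to $Z_n=\#\{v:|v|=n,\ \Xi_v>c'n\}$: here $\mathbb E[Z_n]=m^n\mathbb P(S_n>c'n)\to\infty$ since $I(c')<\log m$, and combined with a matching bound $\mathbb E[Z_n^2]\le C(\mathbb E[Z_n])^2$ this gives $\mathbb P(M_n\ge c'n)\ge\delta>0$. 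A germination argument then upgrades this to probability tending to $1$: running the process for a fixed number $k$ of generations produces $W_k$ particles with $W_k\to\infty$ almost surely (the process is supercritical and cannot die, as $p_0=0$), and conditionally on generation $k$ the $W_k$ subtrees evolve independently, each advancing its own maximum by $c'(n-k)$ with probability at least $\delta$; thus $\mathbb P(M_n<c'n-C_k)\le \mathbb E[(1-\delta)^{W_k}]+o(1)$, which tends to $0$ on letting first $n\to\infty$ and then $k\to\infty$. Combining $\mathbb P(M_n\ge c'n-C_k)\to 1$ with $\mathbb E[M_n^-]=o(n)$ yields $\liminf_n\mathbb E[M_n]/n\ge c'$, and $c'\uparrow c$ concludes.

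\emph{Main obstacle.} The delicate step is the second-moment estimate $\mathbb E[Z_n^2]\le C(\mathbb E[Z_n])^2$. Via a many-to-two (spine-pair) decomposition it reduces to a sum over the generation at which two sampled particles split, and this sum converges only after restricting the counted particles to those whose ancestral path stays within a suitable linear tube. When the offspring law has infinite variance the argument breaks down, and the standard remedy is to truncate the number of children at a level $b$, which only decreases $M_n$; applying the second-moment method to the truncated process (which has all moments) gives $\liminf_n\mathbb E[M_n]/n\ge c_b$, where $c_b$ solves $I(c_b)=\log m_b$ with $m_b\uparrow m$, and $c_b\uparrow c$ by the continuity and monotonicity of $I$.
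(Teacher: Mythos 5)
Your proposal is essentially correct, but it takes a genuinely different route from the paper. The paper does not prove this lemma by moment methods at all: it invokes Biggins' theorem for the almost sure convergence of $M_n/n$ and then notes that the convergence of expectations ``follows easily from Liggett's subadditive ergodic theorem as outlined in Zeitouni'' --- i.e.\ one uses the superadditivity $M_{n+m}\ge M_n+\tilde M_m$ (where $\tilde M_m$ is the maximum over descendants of the leader at generation $n$, independent of the past and distributed as $M_m$) to get almost sure and $L^1$ convergence to a constant, which is then identified as $c$ by the Chernoff computation. You instead reprove the result from scratch: many-to-one plus Chernoff for the upper bound (your integration of the tail $\mathbb{P}(M_n>c'n+s)\le m^n e^{-nI(c'+s/n)}$, using the chord bound $I(c'+s/n)\ge I(c')+\tfrac{I(c')}{c'}\tfrac{s}{n}$, correctly upgrades the probability bound to a bound on $\mathbb{E}[(M_n-c'n)^+]$); and for the lower bound, Paley--Zygmund plus a germination step, with the negative part controlled by the embedded random walk available because $p_0=0$. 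What the paper's route buys is brevity and no second-moment computation whatsoever; what your route buys is a self-contained, quantitative argument (exponential upper-deviation bounds, convergence in probability) that does not rely on the subadditive ergodic theorem, and that isolates exactly where finiteness of offspring variance would be needed and how to dispense with it by truncation ($m_b\uparrow m$, hence $c_b\uparrow c$ by continuity and strict monotonicity of $I$). Two caveats: as you yourself flag, the unrestricted bound $\mathbb{E}[Z_n^2]\le C(\mathbb{E}[Z_n])^2$ is false in general, so the tube-restricted second-moment computation is not optional but is the technical core of your lower bound, and your write-up defers it; and your claim $\mathbb{E}[Z_n]\to\infty$ requires Cram\'er's large-deviation \emph{lower} bound (not just Chernoff), which holds here since $\Lambda$ is finite everywhere for the discretized Gaussian $\mu$. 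With those two steps written out, your argument is a complete and standard alternative proof.
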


In fact, Biggins~\cite{biggins1977chernoff} proves almost sure convergence of $M_n/n$. Lemma~\ref{lem:ptwcv} follows easily from Liggett's subadditive ergodic theorem as outlined in Zeitouni \cite{ZeitouniLNBRW}.

\begin{lemma}
\label{lemunifcv}
Let $\Delta t>0$ and $\Delta x>0$. Let $1<\underline{\rho}<\bar{\rho}$. Consider a family of reproduction laws $(p_{m})_{m>0}$ such that $\sum_{i=1}^\infty kp_{m,k}=m$ and $(p_{m})$ is increasing with respect to $m$ (with respect to stochastic domination). Uniformly in $m\in[\underline{\rho},\bar{\rho}]$,
\begin{equation*}
	\lim\limits_{n \rightarrow +\infty}\mathbb{E}\left[\frac{M_n}{n}\right] =c,
\end{equation*}
where $c$ is the unique positive solution of $I(c)=\log(m).$
\end{lemma}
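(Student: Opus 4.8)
The plan is to deduce the uniform convergence from the pointwise convergence provided by Biggins' theorem (Lemma~\ref{lem:ptwcv}), by exploiting the monotonicity of the model in the parameter $m$ and invoking the classical principle that monotone functions converging pointwise to a continuous limit on a compact interval converge uniformly (a Dini/Pólya-type argument).

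First I would set $f_n(m) = \mathbb{E}[M_n^{(m)}/n]$, where $M_n^{(m)}$ is the position of the rightmost particle at generation $n$ in the branching random walk with reproduction law $p_m$ and displacement law $\mu$. The first key step is to show that, for each fixed $n$, the map $m\mapsto f_n(m)$ is nondecreasing on $[\underline{\rho},\bar{\rho}]$. This follows from a monotone coupling: since $(p_m)$ is increasing with respect to stochastic domination, for $m_1\le m_2$ I would apply Lemma~\ref{lem:coupl} to the two branching random walks (with $S^1$ the $m_2$-walk and $S^2$ the $m_1$-walk, $\tau=+\infty$). In this setting there is no competition, the offspring law of $n$ particles is the $n$-fold convolution of the single-particle law, so condition \eqref{Assumption3} (monotonicity in the site occupancy) holds automatically, and condition \eqref{Assumption4} reduces to the stochastic domination of the convolved offspring laws, which is inherited from $p_{m_1}\le p_{m_2}$. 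The lemma then produces a coupling in which the $m_2$-configuration dominates the $m_1$-configuration at every generation, so $M_n^{(m_2)}\ge M_n^{(m_1)}$ almost surely and hence $f_n(m_1)\le f_n(m_2)$. The same domination, together with the finiteness of $f_n(\bar{\rho})$ (standard for a branching random walk with finite offspring mean and the super-exponential tails of $\mu$), shows that all the $f_n$ are finite on $[\underline{\rho},\bar{\rho}]$.

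Next, by Lemma~\ref{lem:ptwcv}, for each fixed $m$ we have $f_n(m)\to c(m)$ as $n\to\infty$, where $c(m)$ is the unique positive solution of $I(c)=\log m$. Since $I$ is finite, strictly convex, and strictly increasing on $(0,\infty)$, its inverse is continuous there, so $m\mapsto c(m)=I^{-1}(\log m)$ is continuous and increasing on $[\underline{\rho},\bar{\rho}]$, hence uniformly continuous on this compact interval.

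Finally I would run the squeezing argument. Fix $\eta>0$ and, using uniform continuity of $c(\cdot)$, choose a partition $\underline{\rho}=m_0<m_1<\cdots<m_J=\bar{\rho}$ with $c(m_{j+1})-c(m_j)<\eta$ for each $j$. By pointwise convergence at the finitely many points $m_j$, there is $N$ such that $|f_n(m_j)-c(m_j)|<\eta$ for all $j$ and all $n\ge N$. For arbitrary $m\in[m_j,m_{j+1}]$, the monotonicity of $f_n$ and of $c$ gives $f_n(m_j)\le f_n(m)\le f_n(m_{j+1})$ and $c(m_j)\le c(m)\le c(m_{j+1})$; combining these with the two preceding estimates and with $c(m_{j+1})-c(m_j)<\eta$ yields $|f_n(m)-c(m)|\le 2\eta$ for all $m\in[\underline{\rho},\bar{\rho}]$ and all $n\ge N$, which is exactly the claimed uniform convergence. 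The main obstacle is the first step, namely establishing that $f_n$ is monotone in $m$ through the coupling; once monotonicity and the continuity of the limit are in hand, the uniform convergence is a soft consequence of the Dini/Pólya principle.
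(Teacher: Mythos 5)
Your proposal is correct and follows essentially the same route as the paper: define $f_n(m)=\mathbb{E}[M_n/n]$, prove monotonicity in $m$ via the coupling of Lemma~\ref{lem:coupl}, get pointwise convergence from Biggins' theorem (Lemma~\ref{lem:ptwcv}) and continuity of $m\mapsto c(m)$, and conclude by the Dini/Pólya principle for monotone functions with a continuous limit on a compact interval. The only difference is cosmetic: the paper cites Dini's theorem directly, whereas you reproduce its standard partition-and-squeeze proof in full.
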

\begin{proof}
Define
$$f_n:\begin{cases}
	(1,\infty) & \rightarrow \mathbb{R}                        \\
	m          & \mapsto \mathbb{E}\left[\frac{M_n}{n}\right].
\end{cases}$$
We claim that $(f_n)$ is a sequence of increasing %and continuous 
functions. Indeed, for any $1<m_1<m_2$, consider $S^1$ (resp. $S^2$) a branching random walk of reproduction law $p_{m_1}$ (resp. $p_{m_2}$) and of displacement law $\mu$. According to Lemma \ref{lem:coupl}, we can construct a coupling between $S^1$ and $S^2$, such that $S^2$ dominates $S^1$. Hence, $M^1_n \le M^2_n$, where $M^i_n$ denotes the position of the maximal particle in the branching random walk $S^i$, $i=1,2$. It follows that $f_n$ is increasing on $(1,\infty)$, for all $n\in \mathbb{N}$.
%Moreover, the monotone convergence theorem implies that $f_n(m_1)$ tends to $f_n(m_2)$ as $m_1$ tends to $m_2$. Hence, $f_n$ is continuous on $(1,\infty)$, for all $n\in \mathbb{N}$. 

Let us now consider the function $c:(1,\infty)\rightarrow (0,\infty)$ that maps $m$ to the unique positive solution of $I(c)=\log(m)$. According to Lemma \ref{lem:6}, the function $c$ is continuous on $(1,\infty)$. Moreover, by Lemma~\ref{lem:ptwcv}, $f_n \to c$ pointwise as $n\to\infty$. Using the monotonicity of $f_n$, Dini's theorem then yields uniform convergence on the compact sets of $[\underline{\rho},\bar{\rho}]$.
\end{proof}

%
%\subsection{Upper bound on the second moment}
%\label{sec:ubm2}

\begin{lemma}
\label{lemmom2}
Let $\Delta t>0$, $1<\underline{\rho}<\bar{\rho}$ and $\Delta x<\sqrt{2\Delta t\log(\underline{\rho})}$. Consider a reproduction law $(p_k)_{k\in\mathbb{N}}$ such that $\sum k p_k =m>1$ and $c$ the unique positive solution or $I(c)=\log(m)$. For any $\eta>0$, uniformly in $m\in[\underline{\rho},\bar{\rho}],$ there exists $N\in\mathbb N$ such that
\[\forall n\geq N,\quad\mathbb{E}\left[\frac{M_n^2}{n^2}\right]\leq (c+\eta)^2.\]
\end{lemma}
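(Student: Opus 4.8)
The plan is to bound the upper tail of $M_n$ by a Chernoff estimate and then integrate. Write $M_n^2 = (M_n^+)^2 + (M_n^-)^2$ with $M_n^+ = M_n \vee 0$ and $M_n^- = (-M_n)\vee 0$. The negative part is harmless: since $p_0 = 0$ in the situations where this lemma is applied, one may follow a fixed lineage (say, always selecting the first child), whose position at generation $n$ is a sum $S_n$ of $n$ i.i.d.\ increments of law $\mu$. As $M_n \ge S_n$, we get $M_n^- \le S_n^-$ and hence $\mathbb{E}[(M_n^-)^2] \le \mathbb{E}[S_n^2] = \Var(\mu)\,n$, which is $o(n^2)$.

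For the main term I would use the layer-cake formula $\mathbb{E}[(M_n^+)^2] = \int_0^\infty 2x\,\mathbb{P}(M_n > x)\,dx$ and split the integral at $x = (c + \eta/2)n$. On $[0,(c+\eta/2)n]$ I bound $\mathbb{P}(M_n > x) \le 1$, which gives a contribution of at most $(c+\eta/2)^2 n^2$. For the tail $x \ge (c+\eta/2)n$, I combine the many-to-one lemma (Lemma~\ref{lem:manyto1}) with the Chernoff bound \eqref{cramer}, exactly as in the proof of Lemma~\ref{lem:speedmab}, to obtain
\[
\mathbb{P}(M_n > x) \le m^n\, e^{-nI(x/n)} = e^{\,n(\log m - I(x/n))}.
\]

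Next I would convert this into genuine exponential decay. Since $I$ is convex with $I(0)=0$ and $I(c) = \log m$, the chord inequality gives, for every $y \ge c$,
\[
I(y) - \log m \ge \frac{\log m}{c}\,(y - c),
\]
so that for $x \ge cn$ one has $\mathbb{P}(M_n > x) \le e^{-(\log m/c)(x - cn)}$. Substituting $u = x - cn$, the tail integral is bounded by $2\int_{(\eta/2)n}^\infty (u + cn)\,e^{-(\log m/c)u}\,du$, which is of order $(n+1)\,e^{-(\log m/c)(\eta/2)n}$ and hence $o(n^2)$. Collecting the two pieces yields $\mathbb{E}[M_n^2]/n^2 \le (c+\eta/2)^2 + o(1)$, which is at most $(c+\eta)^2$ once $n$ is large enough, since the gap $(c+\eta)^2 - (c+\eta/2)^2$ is bounded below.

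Finally, for uniformity in $m \in [\underline{\rho},\bar{\rho}]$, I would note that $m \mapsto c(m)$ is continuous (Lemma~\ref{lem:6}) and positive, so $c(m)$ stays in a compact subinterval of $(0,\infty)$ and the decay rate $\log m / c(m)$ is bounded below by some $\beta_0 > 0$ uniformly; the error term is then $O(n^{-1}e^{-\beta_0(\eta/2)n})$ uniformly, so a single $N$ works for the whole family. The main obstacle is precisely this uniform control of the tail: one must guarantee that both the threshold $(c+\eta/2)n$ and the exponential rate are controlled uniformly over the compact range of $m$, which is where the continuity and positivity of $c(\cdot)$ from Lemma~\ref{lem:6}, together with the bounds of Lemma~\ref{minc}, are essential.
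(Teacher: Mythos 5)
Your proof is correct and follows essentially the same route as the paper's: truncate the second moment at a threshold slightly above $cn$, bound the tail via the many-to-one lemma and Chernoff's bound, linearize the rate with the convexity chord $I(y)-\log m \geqslant \frac{\log m}{c}(y-c)$ for $y\geqslant c$, and obtain uniformity in $m$ from the compact bounds on $c$ given by Lemma~\ref{minc}. The only cosmetic differences are that the paper controls negative values of $M_n$ through $|M_n|\leqslant \max_{|v|=n}|\Xi_v|$ and the symmetry of $\mu$ (rather than your embedded random-walk lineage, a trick the paper itself uses in Lemmas~\ref{lb:condexp} and~\ref{ub:var}), and that it truncates at $c+\eta$ and then relabels $\eta$ instead of truncating at $c+\eta/2$ and absorbing the error into the gap.
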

\begin{proof}
Let us first remark that, for all $n\in \mathbb{N}$,
$$\mathbb{E}\left[\frac{1}{n^2}\left(\max_{|v|=n}\Xi_v\right)^2\right]\leq \mathbb{E}\left[\frac{1}{n^2}\left(\max_{|v|=n}|\Xi_v|\right)^2\right].$$
Then, we define $\xi_n=\frac{1}{n}\max_{|v|=n}|\Xi_v|,$ and write that, for all $R>0$,
\begin{equation}
	\mathbb{E}[\xi_n^2]=\mathbb{E}[\xi_n^2\mathbb{1}_{\xi_n^2<R^2}]+\mathbb{E}[\xi_n^2\mathbb{1}_{\xi_n^2\geq R^2}].
	\label{eq:spm2}
\end{equation}
Besides, $ \mathbb{E}[\xi_n^2\mathbb{1}_{\xi_n^2<R^2}]\leq R^2$ and,
\begin{equation}
	\mathbb{E}[\xi_n^2\mathbb{1}_{\xi_n^2\geq R^2}] =\int_R^\infty 2u\mathbb{P}(\xi_n\geq u)du.
	\label{eq:fub}
\end{equation}
Thanks to the many-to-one lemma (Lemma~\ref{lem:manyto1}) and Markov's inequality, we know that
\begin{equation}
	\mathbb{P}(\xi_n\geq u)=\mathbb{P}\left(\frac{1}{n}\max_{|v|=n}|\Xi_v|\geq u\right)\leq \mathbb{E}\left[\sum_{|v|=n}\mathbb{1}_{\frac{|\Xi_v]}{n}\geq u}\right]\leq m^n \mathbb{P}\left(\frac{|Z_n|}{n}\geq u \right),\label{manyto2X}
\end{equation}
where $(Z_n)_{n\ge0}$ is a random walk whose increments are distributed as $\mu$. Moreover, by symmetry,
\begin{equation}
	\label{eq:symmetry}
	\mathbb{P}\left(\frac{|\Xi_v|}{n}\geq u \right)=2\mathbb{P}\left(\frac{\Xi_v}{n}\geq u \right).
\end{equation}
Equations \eqref{manyto2X} and \eqref{eq:symmetry}, together with Chernoff's bound then give
\begin{equation}
	\mathbb{P}(\xi_n\geq u)\leq 2m^n e^{-nI(u)}=2e^{-n(I(c)-I(u))}\leq 2e^{-n\frac{I(c)}{c}(u-c)}.
	\label{eq:31}
\end{equation}
According to Lemma \ref{minc} and Equation (\ref{def:c0}), since $\Delta x<\sqrt{2\Delta t\log(\underline{\rho})},$\begin{equation*}
	\frac{1}{2}\sqrt{2\Delta t \log(\underline{\rho})}<\frac{1}{2}\sqrt{2\Delta t \log(m)}<c<2\sqrt{2\Delta t \log(m)}<2\sqrt{2\Delta t \log(m)}<2\sqrt{2\Delta t \log(\bar{\rho})},
\end{equation*} so that
\begin{equation}
	0<\alpha:=\frac{I\left(\frac{1}{2}\sqrt{2\Delta t \log(\underline{\rho})}\right)}{2\sqrt{2\Delta t\log(\bar{\rho})}}<\frac{I(c)}{c}.
	\label{lb:c}
\end{equation}
Let $\eta>0$ and  consider $R=c+\eta$. Equations (\ref{eq:fub}), (\ref{manyto2X}), (\ref{eq:31}) and (\ref{lb:c}) give that
\begin{equation*}
	\mathbb{E}[\xi_n^2\mathbb{1}_{\xi_n^2\geq R^2}] \leq 4 \int_{c+\eta}^\infty ue^{-n\alpha(u-c)}du=4\int_\eta^\infty(u+c)e^{-n\alpha u}du\leq 4\int_\eta^\infty(u+\sqrt{2\Delta t \log(\bar{\rho})})e^{-n\alpha u}du.
\end{equation*}
Remark that the last integral tends to $0$ as $n$ tends to infinity, so that there exists $N_\eta\in \mathbb{N}$ such that, for all $n\geq N_\eta$,
\begin{equation*}
	\mathbb{E}[\xi_n^2\mathbb{1}_{\xi_n^2\geq R^2}] \leq \eta,
\end{equation*}
and, thanks to Equation (\ref{eq:spm2}), for $n$ large enough,
\begin{equation*}
	\mathbb{E}[\xi_n^2]\leq(c+\eta)^2+\eta.
\end{equation*}
Since $\eta>0$ was arbitrary, this yields the result.
\end{proof}

%%%%%%%%%%%%%%%%%%%%%%%%%%%%%%%%%%%%%%%%%%%%%%%%%%%%%%%%%%%%%%%%%%%

\section{Appendix: stability of the solution of (\ref{cauchypb}) and convergence of the Euler scheme}

\label{sec:EDO}

\subsection{Stability}
\begin{lemma}\label{lem:stab:EDO}
\jj{Assume that $r$ is a smooth growth rate function}. Let $\delta>0$ and $T>0$. Consider $x$ the solution of
\begin{equation*}
	\begin{cases}
		\dot{x}(t) & =\sqrt{2r(t,x(t))} \\
		x(0)       & =0,
	\end{cases}
\end{equation*}
and $\tilde{x}$ the solution of
\begin{equation*}
	\begin{cases}
		\dot{\tilde{x}}(t) & =\sqrt{2r(t,\tilde{x}(t))}+\delta \\
		\tilde{x}(0)       & =0.
	\end{cases}
\end{equation*}
Then,
\begin{equation*}
	\sup_{t\in[0,T]}|x(t)-\tilde{x}(t)|\leq \delta (T+1)e^{LT}.
\end{equation*}
\end{lemma}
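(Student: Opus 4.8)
The plan is to control the difference $e(t) := \tilde{x}(t) - x(t)$ by a Grönwall argument driven by the Lipschitz estimate \eqref{rem:lip} for the velocity field $\sqrt{2r}$. First I would record that both Cauchy problems have unique global solutions: for $x$ this is Remark \ref{solglobal}, and for $\tilde x$ the field $(t,y)\mapsto\sqrt{2r(t,y)}+\delta$ has the very same Lipschitz constant in $y$ as $\sqrt{2r}$ (the constant $\delta$ does not affect regularity), so the same reasoning applies. Hence $e$ is well defined and continuous on $[0,T]$ with $e(0)=0$.

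Next I would subtract the integral formulations of the two ODEs to obtain, for every $t\in[0,T]$,
\[
e(t) = \int_0^t \Bigl(\sqrt{2r(s,\tilde{x}(s))} - \sqrt{2r(s,x(s))} + \delta\Bigr)\,ds.
\]
Applying \eqref{rem:lip} with equal time arguments gives $\bigl|\sqrt{2r(s,\tilde{x}(s))} - \sqrt{2r(s,x(s))}\bigr| \le L\,|\tilde{x}(s)-x(s)| = L\,|e(s)|$, so taking absolute values inside the integral yields
\[
|e(t)| \le \delta t + L\int_0^t |e(s)|\,ds, \qquad t\in[0,T].
\]

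I would then conclude by Grönwall's inequality in integral form: since $t\mapsto \delta t$ is nondecreasing, the bound above gives $|e(t)| \le \delta t\, e^{Lt}$ for all $t\in[0,T]$. As $t\mapsto t e^{Lt}$ is increasing, it follows that $\sup_{t\in[0,T]}|e(t)| \le \delta T e^{LT} \le \delta(T+1)e^{LT}$, which is the claimed estimate.

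No step presents a genuine difficulty here; this is a textbook continuous-dependence argument. The only points to handle with care are (i) checking that the perturbed field remains Lipschitz in the space variable with the \emph{same} constant $L$, so that \eqref{rem:lip} applies verbatim and Grönwall's integral form is legitimate, and (ii) noting that the slightly loose target $\delta(T+1)e^{LT}$ is implied by the sharper $\delta T e^{LT}$, the extra $+1$ being harmless and presumably chosen to match the companion Euler-scheme estimate used elsewhere in the appendix.
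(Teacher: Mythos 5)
Your proof is correct, and while both arguments are Gr\"onwall arguments resting on the Lipschitz estimate \eqref{rem:lip}, the implementation differs in a genuine way. The paper works in differential form: to avoid differentiating $|x(t)-\tilde{x}(t)|$ at its zeros, it introduces the regularized quantity $u(t)=\sqrt{(x(t)-\tilde{x}(t))^2+\delta}$, shows $u'(t)\leqslant Lu(t)+\delta$, and applies the differential form of Gr\"onwall's inequality to get $u(t)\leqslant(\delta t+u(0))e^{Lt}$. You instead subtract the integral formulations and apply the integral form of Gr\"onwall's inequality directly to $|e(t)|$, which needs only continuity and no smoothing device. Your route buys two things: it yields the slightly sharper bound $\delta T e^{LT}$, and it sidesteps a small wrinkle in the paper's computation, namely that the regularization gives $u(0)=\sqrt{\delta}$, so the paper's final chain $(\delta t+u(0))e^{Lt}\leqslant\delta(T+1)e^{LT}$ as written requires $\sqrt{\delta}\leqslant\delta$, i.e.\ $\delta\geqslant 1$, whereas the lemma is applied elsewhere with small $\delta$; the intended regularizer is presumably $\sqrt{(x(t)-\tilde{x}(t))^2+\delta^2}$, which gives $u(0)=\delta$ and restores the stated constant. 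Your two points of care are also handled correctly: the perturbed field $(t,y)\mapsto\sqrt{2r(t,y)}+\delta$ indeed has the same spatial Lipschitz constant $L$, so global existence and uniqueness for $\tilde{x}$ follow exactly as in Remark \ref{solglobal}, and the relaxation from $\delta Te^{LT}$ to $\delta(T+1)e^{LT}$ is harmless.
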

\begin{proof}
%Recall from Remark \ref{solglobal} that under Assumption \ref{Assumption0}, the solutions $x$ and $\tilde{x}$ are unique and defined on $[0,+\infty)$.
Let $u(t)=\sqrt{(x(t)-\tilde{x}(t))^2+\delta}$ for $t\in[0,T]$. Note that
\begin{equation*}
	|x(t)-\tilde{x}(t)|\leq u(t), \quad \forall t\in[0,T].
\end{equation*}
Besides, the function $u$ is differentiable on $[0,T]$ and,
\begin{eqnarray*}
	u'(t)&=& \frac{1}{2u(t)}\frac{d}{dt}((\tilde{x}(t)-x(t))^2+\delta)=\frac{1}{u(t)}\left(\frac{d}{dt}x(t)-\frac{d}{dt}\tilde{x}(t)\right)(x(t)-\tilde{x}(t))\\
	&=&\frac{1}{u(t)}\left(\sqrt{2r(t,\tilde{x}(t))}-\sqrt{2r(t,x(t))}+\delta\right)(\tilde{x}(t)-x(t))\\
	&\leq& \frac{1}{u(t)} (L|x(t)-\tilde{x}(t)|+\delta)|\tilde{x}(t)-x(t)|\\
	&\leq& Lu(t)+\delta,
\end{eqnarray*}
where the first inequality \jj{follows from \eqref{eq:approx_r}}. Then, by Grönwall's
inequality, we obtain
\begin{equation*}
	u(t)\leq (\delta t+u(0))e^{Lt}\leq \delta (T+1)e^{LT}, \quad \forall t\in[0,T],
\end{equation*}
which concludes the proof of the lemma.
\end{proof}

\subsection{Euler scheme}
%Under Assumption \ref{Assumption0}, we know that each maximal solution $y$ of \eqref{cauchypb} is global $i.e.$ defined on $[0,\infty).$ 
Consider $x$ the solution of \eqref{cauchypb}.
For any $T>0$ and $h>0$, we can define the Euler scheme of this solution on $[0,T]$ by considering the sequence $(y_i)$ defined by
\begin{equation*}
\begin{cases}
	y_0     & = \added[id=j]{x(0)}     \\
	y_{i+1} & =y_i+\sqrt{2r(ih,y_i)}h.
\end{cases}
\end{equation*}
\jj{Recall the definition of $L$ from \eqref{eq:approx_r}}.
Thanks to \jj{standard} convergence results on the Euler method (see Theorem 14.3 from \cite{willoughby1965elements}), we know that
\begin{equation}
\max_{i\in \llbracket 0,\lfloor T/h\rfloor\rrbracket }|x(t_i)-y_i|\leq e^{LT}\frac{h}{2} \label{euler:s}.
\end{equation}
\begin{rem}
For any $\delta >0$, Equation  (\ref{euler:s}) still holds for the function $\tilde{x}$ solution of
\begin{equation*}
	\dot{\tilde{x}}(t)=\sqrt{2r(t,\tilde{x}(t))}+\delta,
\end{equation*}
and its Euler scheme $(\tilde{y}_i)$ on $[0,T]$:
\begin{equation*}
	\begin{cases}
		\tilde{y}_0     & = \added[id=j]{\tilde{x}(0)}                      \\
		\tilde{y}_{i+1} & =\tilde{y}_i+(\sqrt{2r(ih,\tilde{y}_i)}+\delta)h.
	\end{cases}
\end{equation*}
\label{rem:euler:delta}
\end{rem}
\section*{Acknowledgments}

P.M. and J.T. partially supported by grant ANR-20-CE92-0010-01. G.R. et J.T. have recieved partial funding from the ANR project DEEV ANR-20-CE40-0011-01 and the chaire Modélisation Mathématique et Biodiversité of Véolia Environment - École Polytechnique - Museum National d'Histoire Naturelle - Fondation X.

\bibliographystyle{abbrv}
\bibliography{biblio,2019_these_Julie}

\end{document}